\newcommand{\smallmat}[4]{\left(\begin{smallmatrix}#1&#2\\#3&#4\end{smallmatrix}\right)}
\renewcommand{\arraystretch}{1.3}
\newtheorem{theorem}{Theorem}
\newtheorem{corollary}[theorem]{Corollary}
\newtheorem{conjecture}[theorem]{Conjecture}
\newtheorem{lemma}[theorem]{Lemma}
\newtheorem{proposition}[theorem]{Proposition}
\newtheorem{definition}[theorem]{Definition}
\theoremstyle{definition}
\newtheorem{remark}[theorem]{Remark}
\newtheorem{example}[theorem]{Example}
\definecolor{light-gray}{gray}{0.1}
\DeclareMathOperator{\val}{val} 
\DeclareMathOperator{\tors}{tors}
\newcommand{\Q}{\mathbb Q}
\newcommand{\Qbar}{{\overline{\mathbb Q}}} 
\newcommand{\Z}{\mathbb Z}
\newcommand{\F}{\mathbb F}
\newcommand{\PP}{\mathbb P}
\newcommand{\SL}{\operatorname{SL}}
\newcommand{\Gal}{\operatorname{Gal}}
\newcommand{\Aut}{\operatorname{Aut}}
\newcommand{\GL}{\operatorname{GL}}
\newcommand{\PGL}{\operatorname{PGL}}
\definecolor{light-gray}{gray}{0.95}
\definecolor{light-gray2}{gray}{0.65}
\newenvironment{romanenum}{\hfill \begin{enumerate} }{\end{enumerate}}
\begin{document}

\bibliographystyle{plain}
\title{Serre's constant of elliptic curves over the rationals}

\author{Harris B. Daniels}
\address{Department of Mathematics and Statistics, Amherst College, MA 01002, USA}
\email{hdaniels@amherst.edu} 
\urladdr{http://hdaniels.people.amherst.edu}

\author{Enrique Gonz\'alez-Jim\'enez}
\address{Universidad Aut{\'o}noma de Madrid, Departamento de Matem{\'a}ticas, Madrid, Spain}
\email{enrique.gonzalez.jimenez@uam.es}
\urladdr{http://matematicas.uam.es/~enrique.gonzalez.jimenez}

\subjclass[2010]{Primary: 11G05; Secondary: 11F80}

\keywords{Elliptic curves, rationals, Galois representation}
\thanks{The second author was partially  supported by the grant PGC2018--095392--B--I00.}

\date{\today}

\begin{abstract} 
Let $E$ be an elliptic curve without complex multiplication defined over the rationals. The purpose of this article is to define a positive integer $A(E)$, that we call the {\it Serre's constant associated to $E$}, that gives necessary conditions to conclude that $\rho_{E,m}$, the mod m Galois representation associated to $E$, is non-surjective. In particular, if there exists a prime factor $p$ of $m$ satisfying $\val_p(m) \ge \val_p(A(E))>0$ then $\rho_{E,m}$ is non-surjective. {Conditionally under Serre's Uniformity Conjecture, w}e determine all the Serre's constants of elliptic curves without complex multiplication over the rationals that occur infinitely often. 
Moreover, we give all the possible combination of mod $p$ Galois representations that occur for infinitely many non-isomorphic classes of non-CM elliptic curves over $\Q$, and the known cases that appear only finitely. We obtain similar results for the possible combination of maximal non-surjective subgroups of $\GL_2(\Z_p)$. Finally, we conjecture all the possibilities of these combinations and in particular all the possibilities of these Serre's constants.
\end{abstract}

\maketitle

\date{\today}

\section{Introduction}\label{sec:intro}
Let $E/\Q$ be an elliptic curve and $n$ a positive integer. We denote by $E[n]$ the $n$-torsion subgroup of $E(\Qbar)$, where $\Qbar$ is a fixed algebraic closure of $\Q$. The absolute Galois group $\Gal(\Qbar/\Q)$ acts on $E[n]$ by its action on the coordinates of the points, inducing a Galois representation  
$$
\rho_{E,n}\,:\,\Gal(\Qbar/\Q)\longrightarrow \Aut(E[n]),
$$
called the {\it mod $n$ Galois representation associated to $E$}.  Notice that since $E[n]$ is a free $\Z/n\Z$-module of rank $2$, fixing a $\Z$-basis of $E[n]$, we identify $ \Aut(E[n])$ with $\GL_2(\Z/n\Z)$. Then we rewrite the above Galois representation as
$$
\rho_{E,n}\,:\,\Gal(\Qbar/\Q)\longrightarrow\GL_2(\Z/n\Z).
$$
Therefore we can view $\rho_{E,n}(\Gal(\Qbar/\Q))$ as a subgroup of $\GL_2(\Z/n\Z)$, determined uniquely up to conjugacy, and denoted by $G_E(n)$ in the sequel. 

Fixing a prime $p$ and choosing compatible bases for $E[p^k]$ for all $k$, one can take the inverse limit of these mod $p^k$ Galois representations and construct a new map
\[
\rho_{E,p^\infty}: \Gal(\Qbar/\Q) \to \GL_2(\Z_p),
\]
called the {\it $p$-adic Galois representation associated to $E$}.  Let us denote by $G_E^\infty(p)$ the image of $\rho_{E,p^\infty}$, which is determined uniquely up to conjugacy. In certain instances we will say that $G_E(n)=G$ or $G^\infty_E(p)=G$ for some group $G$; this will always mean that we have fixed a basis so that we remove ambiguity of working up to conjugacy.

Suppose that $E$ does not have complex multiplication (CM in the sequel). One of the first major results about the images of Galois representations associated to an elliptic curve is a renowned theorem of Serre \cite[Th\'eor\`eme 2]{Serre} that asserts that $\rho_{E,p}$ is not surjective for a finite number of primes $p$, called exceptional primes (Duke \cite{duke} showed that almost all non-CM elliptic curves have no exceptional primes). In other words, there exists a positive integer $C_E$, depending on $E$, such that $\rho_{E,p}$ is surjective for any prime $p> C_E$. After proving this theorem, Serre immediately asked  \cite[\S 4.3]{Serre} if it was possible to make the constant $C_E$ independent of $E$. Moreover, Serre in \cite[page 399]{serre81} asked if $C_E< 41$ always holds. That is, he asked the following question:\\

{\noindent \bf Serre's Uniformity Question.} {\em If $E/\Q$ is a non-CM elliptic curve, then must it be that $\rho_{E,p}$ is surjective for any prime $p\ge 41$?}\\

Nowadays, {an affirmative answer to the above question has received the name of {\it Serre's Uniformity Conjecture} (or sometimes just {\it Uniformity Conjecture})} despite the fact that Serre himself never conjectured it to be true.  Since Serre first asked the question there has been much progress towards to proving it. A summary of these results can be found in the following theorem (see \cite{curse, bilu, pbr, Mazur1978,Serre, zywina1} for more details):

\begin{theorem}\label{T:Mazur-Serre-BPR}
Let $E/\Q$ be a non-CM elliptic curve and $p$ a prime. Then one the following possibilities occurs:
\begin{romanenum}
\item $G_E(p)=\GL_2(\F_p)$.
\item $p\in  \{2, 3, 5, 7, 11, 13, 17, 37\}$, and $G_E(p)$ is conjugate in $\GL_2(\F_p)$ to one of the groups in Tables \ref{jmaps} and \ref{table_mod_single}.
\item\label{iii} $p\geq 17$: $G_E(p)$ is conjugate to a subgroup of the normalizer of a non-split Cartan subgroup of  level $p$. 
\end{romanenum}
\end{theorem}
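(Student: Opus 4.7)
The plan is to combine Dickson's classification of the maximal proper subgroups of $\GL_2(\F_p)$ with existing results on rational points of modular curves and explicit computations for small primes. By Dickson, for $p\geq 5$ any proper subgroup of $\GL_2(\F_p)$ is contained, up to conjugation, in one of the following: a Borel subgroup; the normalizer of a split Cartan subgroup; the normalizer of a non-split Cartan subgroup; or a subgroup whose projective image in $\PGL_2(\F_p)$ is isomorphic to $A_4$, $S_4$, or $A_5$ (the exceptional case). Assuming $G_E(p)\neq\GL_2(\F_p)$, the proof reduces to handling each of these four families, and showing that for $p\geq 17$ the only surviving one is the normalizer of a non-split Cartan.

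I would eliminate the exceptional projective image case by citing the refinement of Serre's original argument in \cite{Serre}, which forces $p$ to be small (essentially $p\leq 13$). For the Borel case, the inclusion $G_E(p)\subseteq B$ is equivalent to $E$ admitting a $\Q$-rational $p$-isogeny, that is, to a non-cuspidal rational point on $X_0(p)$; Mazur's theorem \cite{Mazur1978} on rational isogenies of prime degree, together with the non-CM hypothesis (which removes the primes $19, 43, 67, 163$), confines us to $p\in\{2,3,5,7,11,13,17,37\}$. For the split Cartan normalizer case, the results of Bilu--Parent--Rebolledo \cite{bilu,pbr} for $p\geq 11$ with $p\neq 13$, combined with \cite{curse} for $p=13$, show that $X_{\mathrm{sp}}^+(p)(\Q)$ consists only of cusps and CM points, so this family too collapses into the small primes list. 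No unconditional analogue is presently available for the non-split Cartan normalizer when $p\geq 17$, so this family is retained and yields case (iii).

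For the small primes $p\in\{2,3,5,7,11,13,17,37\}$, I would appeal to Zywina's explicit classification \cite{zywina1}, which (together with classical work in the $p=2,3$ cases) enumerates every subgroup of $\GL_2(\F_p)$ that actually occurs as $G_E(p)$ for some non-CM $E/\Q$ up to conjugacy; these are precisely the groups displayed in Tables \ref{jmaps} and \ref{table_mod_single}, giving case (ii). The main obstacle is the non-split Cartan normalizer family: no analogue of \cite{Mazur1978} or \cite{bilu,pbr,curse} is currently known for $X_{\mathrm{nsp}}^+(p)$ in sufficient generality to excise case (iii), and doing so for all $p\geq 41$ is exactly the content of Serre's Uniformity Question.
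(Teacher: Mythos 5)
Your proposal is correct and follows essentially the same route as the paper, which offers no proof of its own but simply cites the same sources you invoke (Serre for the exceptional projective images, Mazur for the Borel case, Bilu--Parent--Rebolledo and Balakrishnan et al.\ for the split Cartan normalizer, and Zywina for the explicit small-prime classification), leaving the non-split Cartan normalizer as the unresolved case (iii). The decomposition via Dickson's classification and the way each branch is discharged match the standard argument the theorem summarizes.
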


{ Beside the above theorem, Lemos (\cite[Theorem 1.1]{lemos1}, \cite[Theorem 1.4]{lemos2}) has recently obtained partial results in the direction of a complete proof of the Serre's Uniformity Conjecture:

\begin{theorem}\label{lemos}
Let $E/\Q$ be a non-CM elliptic curve. Suppose that one the following possibilities occurs:
\begin{romanenum}
\item\label{lemos1} $E$ admits a non-trivial cyclic isogeny defined over $\Q$.
\item\label{lemos2} There exists a prime $q$ for which $G_E(q)$ is contained in the normalizer of a split Cartan subgroup of $\GL_2(\F_q)$.
\end{romanenum}
Then $\rho_{E,p}$ is surjective for any prime $p>37$.
\end{theorem}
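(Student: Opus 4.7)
The plan is to argue by contradiction: fix a prime $p > 37$ and assume $\rho_{E,p}$ is non-surjective. Since $p > 37 \geq 17$ and $E$ is non-CM, part \eqref{iii} of Theorem \ref{T:Mazur-Serre-BPR} forces $G_E(p)$ into the normalizer $N_{\mathrm{ns}}(p)$ of a non-split Cartan subgroup of $\GL_2(\F_p)$, so $E$ yields a non-cuspidal non-CM rational point on $X_{\mathrm{ns}}^+(p)$. The game is to show that this is incompatible with the extra level structure provided by \eqref{lemos1} or \eqref{lemos2}.

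For case \eqref{lemos1}, let $\phi \colon E \to E'$ be a cyclic $\Q$-isogeny of degree $N > 1$; by Mazur's isogeny theorem, $N$ lies in an explicit finite list, all of whose entries are well below $37$. Since $p \nmid N$, the isogeny $\phi$ induces a $\Gal(\Qbar/\Q)$-equivariant isomorphism $E[p] \cong E'[p]$, so $G_{E'}(p) \subset N_{\mathrm{ns}}(p)$ as well. Hence $(E, \ker \phi)$ determines a non-CM rational point on the fiber product $X_0(N) \times_{X(1)} X_{\mathrm{ns}}^+(p)$. My approach would be to analyze the $\Q$-rational isogeny character $\chi \colon \Gal(\Qbar/\Q) \to (\Z/N\Z)^\times$ in conjunction with the Cartan structure: because $N_{\mathrm{ns}}(p)$ contains no Borel subgroup, the presence of the cyclic $N$-isogeny on top of the non-split Cartan level forces nontrivial compatibilities between $\chi$, $\det \rho_{E,p}$, and the Galois action on the Cartan. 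Running this against Frobenius traces at small auxiliary primes of good reduction, and exploiting the known constraints on $\chi$ from Mazur's isogeny theorem, should produce an arithmetic contradiction for each admissible $(N,p)$.

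For case \eqref{lemos2}, the hypotheses combine to yield a non-CM rational point on the fiber product $X_{\mathrm{sp}}^+(q) \times_{X(1)} X_{\mathrm{ns}}^+(p)$. Here I would build on the Runge--Mazur technique of Bilu--Parent--Rebolledo \cite{bilu, pbr} for $X_{\mathrm{sp}}^+(p)$: combine modular units from both factors into a $\Q$-rational function that forces strong integrality constraints on $j(E)$, and then argue via a formal immersion at a well-chosen prime that these constraints force $E$ to be CM, contrary to hypothesis. The boundary $p > 37$ should drop out of the genus growth of $X_{\mathrm{ns}}^+(p)$ and the corresponding threshold at which the formal immersion argument succeeds. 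The main obstacle, shared by both parts, is that these fiber products have high genus yet no explicit defining equations, so Chabauty--Coleman cannot be applied directly; one must extract arithmetic information from the incompatibility between the two level structures rather than from the geometry of the curve itself. The hardest subcases should be small $N$ in \eqref{lemos1}, where $X_0(N)$ already carries many rational points, and $q \in \{2,3\}$ in \eqref{lemos2}, where $X_{\mathrm{sp}}^+(q)$ has low genus and contributes the least geometric obstruction.
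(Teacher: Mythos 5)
First, a point of orientation: the paper does not prove this statement at all --- it is quoted verbatim from Lemos (\cite[Theorem 1.1]{lemos1} and \cite[Theorem 1.4]{lemos2}), so there is no in-paper proof to match your argument against. What you have written is therefore an attempt to reprove two substantial published theorems, and as it stands it is a research plan rather than a proof. Your opening reduction is correct: for $p>37$ non-surjectivity of $\rho_{E,p}$ forces $G_E(p)$ into the normalizer of a non-split Cartan by Theorem \ref{T:Mazur-Serre-BPR}(\ref{iii}), and this is indeed where Lemos starts. But every step after that is an aspiration (``should produce an arithmetic contradiction'', ``should drop out of the genus growth'') rather than an argument, and the two concrete mechanisms you propose both have problems.

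In case (\ref{lemos1}), the decisive difficulty is uniformity in $p$: you must rule out infinitely many primes $p>37$ at once, and ``running Frobenius traces at small auxiliary primes'' gives, for each fixed $p$, a finite computation with no mechanism for closing off all $p$ simultaneously. Lemos's actual argument is a delicate analysis of the isogeny character via Momose's classification combined with the quadratic character cut out by the index-two quotient of the Cartan normalizer; none of that structure appears in your sketch. (A minor factual slip: the Mazur--Kenku list of cyclic isogeny degrees for non-CM curves is $\{1,\dots,13,15,16,17,18,21,25,37\}$, so $37$ itself occurs and the entries are not ``all well below $37$''; your conclusion $p\nmid N$ survives only because $p>37$.) In case (\ref{lemos2}), your plan to run Runge's method and a formal immersion on $X_{\mathrm{sp}}^+(q)\times_{X(1)}X_{\mathrm{ns}}^+(p)$ walks straight into the obstruction that has kept the non-split Cartan case of uniformity open: the cusps of $X_{\mathrm{ns}}^+(p)$ form a single Galois orbit, which is precisely why the Bilu--Parent--Rebolledo integrality step \cite{bilu,pbr} works for the split Cartan curve but fails for the non-split one, and passing to the fiber product does not by itself create the second cuspidal orbit the method needs. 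Lemos's proof of this case does not attack the fiber product geometrically at all; it proceeds by a group-theoretic and character-theoretic analysis of the mod $q$ image (using that $q$ is forced to be small by \cite{bilu,pbr}) that ultimately reduces to the isogeny case. So the proposal identifies the right starting point but leaves the entire substance of both theorems unproved.
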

}

{Zywina conjectures  \cite[Conjecture 1.12]{zywina1} that Theorem \ref{T:Mazur-Serre-BPR} (\ref{iii}) is not possible. This is what we call the Strong Uniformity Conjecture.

\

{\noindent \bf Strong Uniformity Conjecture.} {\em Let $E$ be a non-CM elliptic curve defined over $\Q$ and $j_E$ its $j$-invariant. If $p\geq 17$ is a prime such that 
$$
(p, j_E)\not\in \left\{(17,-17\cdot 373^3/2^{17}), (17,-17^2\cdot 101^3/2), (37,-7\cdot 11^3), (37, -7\cdot 137^3\cdot2083^3)  \right \},
$$
then $G_E(p)=\GL_2(\F_p)$.
}

\

Zywina classifies all of the possible images of the mod $p$ images of non-CM elliptic curves defined over $\Q$ given the corresponding moduli spaces for $p\le 13$, except the case when the image of $G_E(13)$ in $\PGL_2(\F_{13})$ is isomorphic to $S_4$ (the permutation group of 4 elements) (see \cite{zywina1,curse,baran14}). 

\begin{conjecture}\label{13S4}
 Let $E$ be a non-CM elliptic curve defined over $\Q$ and $j_E$ its $j$-invariant. Then the image of $G_E(13)$ in $\PGL_2(\F_{13})$ is isomorphic to $S_4$ if and only if 
$$
j_E\in \left \{\frac{2^4\cdot 5\cdot 13^4\cdot 17^3}{3^{13}},-\frac{2^{12}\cdot 5^3\cdot 11\cdot 13^4}{3^{13}} \,,\,\frac{2^{18}\cdot3^3\cdot 13^4\cdot 127^3\cdot 139^3\cdot 157^3\cdot 283^3\cdot 929}{5^{13}\cdot 61^{13}}\right\}.
$$
\end{conjecture}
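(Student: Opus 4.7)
The strategy is to translate the conjecture into a problem about rational points on a modular curve and then apply the modern toolkit for finding rational points on curves of higher genus. Let $H \subset \GL_2(\F_{13})$ be the preimage of $S_4 \subset \PGL_2(\F_{13})$ (more precisely, the maximal such subgroup containing scalars), and let $X_H$ denote the corresponding modular curve over $\Q$. By construction, a non-CM elliptic curve $E/\Q$ has image of $G_E(13)$ in $\PGL_2(\F_{13})$ isomorphic to a subgroup of $S_4$ if and only if the associated $j$-invariant $j_E$ lies in the image of the natural $j$-map $X_H(\Q) \to \mathbb{A}^1(\Q)$. The first step is to write down an explicit model for $X_H$, its $j$-map, and to compute its genus and automorphism group; this has largely been carried out in the literature (e.g.\ work of Banwait--Cremona and Zywina on the exceptional subgroups of $\GL_2(\F_{13})$).

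Once an equation for $X_H$ is in hand, the plan is to determine $X_H(\Q)$ exactly. First I would enumerate the obvious rational points: the cusps of $X_H$, the CM points (which are classified explicitly by the Heegner-type correspondence between CM orders and modular curves of composite level), and the three known non-cuspidal non-CM points corresponding to the three $j$-invariants listed in the conjecture. Second, I would analyze the Jacobian $J_H = \operatorname{Jac}(X_H)$: decompose it up to isogeny over $\Q$ into simple factors (expected to be modular abelian varieties attached to newforms of level $13^2$ or a divisor), and compute the rank of $J_H(\Q)$ by descent or by identifying the factors with known abelian varieties whose ranks are available.

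The final, and hardest, step is to prove that no other rational points exist. If the rank of $J_H$ is strictly smaller than the genus, then Chabauty--Coleman applied at a prime of good reduction $\ell \neq 13$ bounds $\#X_H(\Q_\ell) \cap \overline{J_H(\Q)}$ and, combined with a Mordell--Weil sieve, should rule out rational points outside the known list. If instead the rank equals or exceeds the genus, one may need to pass to a quotient (e.g.\ by an Atkin--Lehner involution, so that $S_4$ is enriched to include an outer automorphism) whose Jacobian has smaller rank, as was done for $X_{ns}^+(13)$ by Balakrishnan--Dogra--M\"uller--Tuitman--Vonk; quadratic Chabauty may in fact be the natural tool here.

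The main obstacle I anticipate is precisely this last step: showing that the rational points computed by searching are all of them. Explicit Chabauty over a curve of genus $\ge 3$ requires not only a rank bound but also careful $p$-adic integration on $X_H$, and if the rank is too large one is forced into the delicate machinery of quadratic (or higher) Chabauty, which requires a nontrivial choice of nice correspondences on $X_H$ and a computation of $p$-adic heights. A secondary difficulty is verifying that the three exhibited $j$-invariants really do give rise to elliptic curves whose mod $13$ image projects to $S_4$ and not to a proper subgroup such as $A_4$ — this is a finite check, carried out by computing Frobenius traces at a few small primes and comparing with the character table of $\operatorname{GL}_2(\F_{13})$, but it must be done to confirm that the conjectured list is exhaustive in the stated sense.
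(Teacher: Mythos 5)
The statement you are asked to prove is, in this paper, genuinely a \emph{conjecture}: the authors do not prove it, they assume it (together with the Strong Uniformity Conjecture) and merely record that Banwait and Cremona showed $X_{\texttt{13S4}}$ has genus $3$ and carries at least the three non-cuspidal, non-CM rational points corresponding to the listed $j$-invariants, and that Balakrishnan, Dogra, M\"uller, Tuitman and Vonk have \emph{announced} (but, at the time of writing, not published) a proof that these are the only such points, using techniques similar to their quadratic Chabauty work on the split Cartan curve of level $13$. So there is no in-paper proof to compare against; your proposal is an outline of essentially the same strategy as the announced external proof.

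As a plan it is sound and correctly identifies the structure of the problem: pass to the modular curve $X_H$ for the maximal subgroup $H\subset\GL_2(\F_{13})$ with projective image $S_4$, use the Banwait--Cremona genus-$3$ model, verify the three known points by a finite Frobenius-trace computation, and then close off $X_H(\Q)$ by (quadratic) Chabauty plus a Mordell--Weil sieve. But be clear that what you have written is not a proof: the decisive content --- the decomposition of $\operatorname{Jac}(X_H)$, the rank computation, the choice of correspondences and the $p$-adic height computations needed for quadratic Chabauty, and the sieve that eliminates spurious residue classes --- is exactly the part you defer, and it is the part that (as of this paper) had only been announced, not carried out in print. One smaller point: the conjecture as stated concerns the projective image being isomorphic to $S_4$ itself, not merely contained in $S_4$, so after determining $X_H(\Q)$ you must also rule out the possibility that a rational point yields a curve whose projective image is a proper subgroup of $S_4$; your final paragraph acknowledges this, and it is indeed a necessary finite check rather than an afterthought.
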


Assuming\footnote{J.~S. Balakrishnan, N. Dogra, J.~S. M\"uller, J. Tuitman, and J. Vonk have recently announced a proof of this conjecture but have yet to make the results available publicly.} Conjecture \ref{13S4} and the Strong Uniformity Conjecture, Zywina gives all the possible groups $G_E(p)$ for $E/\Q$ and the corresponding moduli spaces. This data can be found in Tables \ref{jmaps} and \ref{table_mod_single}. 

\begin{remark}\label{remadic}
Let $E/\Q$ be a non-CM elliptic curve and $p$ a prime. Serre  \cite[IV]{serre2} showed that if $p\geq 5$ then $\rho_{E,p}$ is surjective if and only if  $\rho_{E,p^\infty}$ is surjective. But when $p = 2$  or $3$ it is not the case. The reason is that there are proper subgroups of $\SL_2(\Z/4\Z)$ and $\SL_2(\Z/8\Z)$ that surject onto $\SL_2(\Z/2\Z)$ under the standard reduction map as well as a proper subgroup of $\SL_2(\Z/9\Z)$ that surjects onto $\SL_2(\Z/3\Z)$. These groups and the corresponding moduli spaces of elliptic curves can be found in \cite{elkies2006elliptic} or \cite{mavrides} and \cite{dok^2} or \cite{RZB} and are available in Table \ref{tab:adic_groups}. 
\end{remark}

In view of the above remark we make the following definition:

\begin{definition}
Let $E/\Q$ be a  non-CM elliptic curve and $p$ a prime. We say that $p$ is {\it adically-exceptional} for $E$ if $\rho_{E,p^\infty}$ is not surjective. 
\end{definition}
 
Remark \ref{remadic} asserts that if $p\ge 5$ then $p$ is exceptional if and only if it is adically-exceptional, and that the only possible primes that could be non-exceptional but adically-exceptional are $p=2$ and $p=3$.

An affirmative answer to the Serre's uniformity question does not give any information about the possible combinations of exceptional primes (or adically-exceptional primes) which may occur for a given non-CM elliptic curve defined over $\Q$. In attempt to study this question, we give the following definition\footnote{The definition of Serre's constant that appears in this paper is a generalization of the one that Cojocaru defined at \cite{cojocaru}.}.

\begin{definition}
Let $E/\Q$ be a non-CM elliptic curve, then we define {\it Serre's constant} associated to $E$ to be 
\[
A(E) = \prod_{p\,\,\text{prime}} p^k
\]
where $k$ is the smallest positive integer such that $\rho_{E,p^k}$ is non-surjective if such an integer exists or $0$ otherwise.
\end{definition}

Note that if $p$ is not adically-exceptional then $\val_p(A(E))=0$. In particular, if no prime is not adically-exceptional for $E$ then $A(E)=1$. Moreover, Jones \cite{jones} has proved that almost all non-CM elliptic curve over $\Q$ have $A(E)=1$. On the other hand, if $m$ is a positive integer, $A(E)$ gives necessary conditions to conclude that $\rho_{E,m}$ is non-surjective.

\begin{proposition}
Let $E/\Q$ be a non-CM elliptic curve and $m\in\mathbb N$. If there exists a prime factor $p$ of $m$ satisfying $\val_p(m) \ge \val_p(A(E))>0$ then $\rho_{E,m}$ is non-surjective. 
\end{proposition}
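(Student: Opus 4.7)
The statement is essentially an immediate consequence of the definition of $A(E)$ together with functoriality of the Galois representations under reduction modulo divisors. The plan is to translate the hypothesis into a statement about $\rho_{E,p^k}$ for $k=\val_p(A(E))$ and then transport non-surjectivity up to $\rho_{E,m}$ via a reduction map.

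First I would set $k = \val_p(A(E))$. By the very definition of Serre's constant, the fact that $k>0$ means that $k$ is the smallest positive integer for which $\rho_{E,p^k}$ is \emph{not} surjective; in particular $\rho_{E,p^k}$ is non-surjective. The hypothesis $\val_p(m)\ge k$ then says precisely that $p^k \mid m$.

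The key step is then to invoke the compatibility of the mod $n$ representations under reduction. Since $p^k \mid m$, the quotient map of rings $\Z/m\Z \twoheadrightarrow \Z/p^k\Z$ induces a surjective group homomorphism
\[
\pi_{m,p^k}\,:\,\GL_2(\Z/m\Z)\longrightarrow \GL_2(\Z/p^k\Z),
\]
and after choosing compatible bases for $E[m]$ and $E[p^k]$ (the $p^k$-torsion being the $p^k$-multiples of a generator of a cyclic factor of $E[m]$, or more canonically $E[p^k]\subset E[m]$), one has the identity of representations $\rho_{E,p^k} = \pi_{m,p^k}\circ \rho_{E,m}$.

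Now suppose for contradiction that $\rho_{E,m}$ is surjective. Then the composition $\pi_{m,p^k}\circ \rho_{E,m}$ is the composition of two surjections, hence surjective, i.e.\ $\rho_{E,p^k}$ is surjective. This contradicts the previous paragraph, so $\rho_{E,m}$ must be non-surjective. I do not foresee any real obstacle; the only thing that requires a brief mention is the standard base-choice argument justifying the identification $\rho_{E,p^k}=\pi_{m,p^k}\circ\rho_{E,m}$, and the conjugacy remark already made in the introduction makes this harmless.
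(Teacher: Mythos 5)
Your argument is correct and is exactly the intended one: the paper states this proposition without proof, treating it as an immediate consequence of the definition of $A(E)$ together with the compatibility $\rho_{E,p^k}=\pi_{m,p^k}\circ\rho_{E,m}$ and the surjectivity of the reduction map $\GL_2(\Z/m\Z)\to\GL_2(\Z/p^k\Z)$, which is precisely what you use. The only blemish is the parenthetical claim that $E[p^k]$ consists of the ``$p^k$-multiples'' of points of $E[m]$ --- it is the $(m/p^k)$-multiples --- but your alternative identification $E[p^k]\subset E[m]$ is the correct one and the proof stands.
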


\begin{remark}
The hypothesis of above proposition is a neccessary  but not sufficient condition. Let $E/\Q$ be the elliptic curve with Cremona label \texttt{3891b1}. Then $E$ is a Serre curve, so all the $p$-adic Galois representations are surjective. Therefore $A(E)=1$. In this case if $m=2\cdot 3\cdot 1297$, then the mod $m$ Galois representation is non-surjective. The reason is that this curve has entanglement: $\Q(E[2]) \cap \Q(E[3891]) = \Q(\sqrt{\Delta_E})$, where $\Delta_E$ is the discriminant of the minimal model of $E$, and $[\GL_2(\Z/m\Z):G_E(m)]=2$ (cf. \cite{harris}).
\end{remark}

We will use the following notation:
\begin{itemize} 
\item Let $\mathcal A$ be the set of the integers $A(E)$ where $E$ runs over all non-CM elliptic curve over $\Q$.
\item  Let $\mathcal{A}_\infty$ be the subset of $\mathcal A$ that occur infinitely often. More precisely, $N\in \mathcal{A}_\infty$ if there are infinitely many non-CM elliptic curves $E$, non-isomorphic over $\Qbar$, such that $A(E)=N$.
\end{itemize}

\begin{remark}
Notice that Serre's Uniformity Conjecture is equivalent to the finiteness of the set $\mathcal A$. { We also point out here that the exponents that appear on 2 and 3 on numbers in $\mathcal{A}$ are bounded. We know this because, if $E/\Q$ is an elliptic curve, then if $\rho_{E,8}$ or $\rho_{E,9}$ respectively are surjective, then $\rho_{E,2^\infty}$ or $\rho_{E,3^\infty}$ respectively have to be surjective.}
\end{remark}

The first theorem of this paper is the following:

\begin{theorem}\label{thm_serre_constant}  
Assuming the Uniformity Conjecture: 
$$
\mathcal{A}_\infty = \{1,2,3,4,5,6,7,8,9,10,11,12,13,14,15,20,21,24,28,40,56,104\}.
$$
\end{theorem}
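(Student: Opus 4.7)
The plan is to reduce the computation of $\mathcal{A}_\infty$ to a finite check on modular curves. Conditionally under the Uniformity Conjecture, the support of $A(E)$ is contained in $\{2,3,5,7,11,13,17,37\}$, and for each such prime $p$ the possible images of $\rho_{E,p^\infty}$ that are strictly smaller than $\GL_2(\Z_p)$ come from the finite list collected in Tables~\ref{jmaps}, \ref{table_mod_single}, and~\ref{tab:adic_groups} (for $p\ge 5$ by Remark~\ref{remadic}, and for $p\in\{2,3\}$ via the $p$-adic tree results of Rouse--Zureick-Brown and Elkies). Each non-surjective image $H$ corresponds to a modular curve $X_H/\Q$, and whether $H$ appears as $G^\infty_E(p)$ for infinitely many non-CM curves (up to $\Qbar$-isomorphism) is equivalent to $X_H$ admitting infinitely many non-cuspidal, non-CM $\Q$-rational points. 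This input is already compiled in the cited literature.

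For each prime $p$, I would first read off the set $\mathcal{V}_p$ of values of $\val_p(A(E))$ that arise infinitely often over the single-prime moduli, together with the specific groups $H\in\mathcal{L}_p$ that realize them. Then $N\in\mathcal{A}_\infty$ exactly when there is a compatible tuple $(H_p)_{p\mid N}$ of non-surjective images together with surjective images outside the support, such that the fiber product of the $X_{H_p}$ over the $j$-line has infinitely many rational points corresponding to a non-CM elliptic curve with exactly those images (and not strictly smaller ones). For single primes this immediately recovers $\{1,2,3,4,5,6,7,8,9,11,13\}$-type contributions; the composite values $10,12,14,15,20,21,24,28,40,56,104$ in the claim must be obtained by combining non-surjective images at two or more primes. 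For each, I would exhibit an explicit modular curve of genus $0$ or $1$ with positive rank and with non-CM rational points realizing that combination, by fiber-producing the relevant genus-zero modular curves (typically $X_0(N)$, $X_{\mathrm{sp}}^+(p)$, $X_{\mathrm{ns}}^+(p)$, and the small-level $2$-adic and $3$-adic covers).

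For the converse, I would enumerate every other tuple $(H_p)$ not appearing in the claimed list and show that the corresponding fiber product has either genus $\ge 2$ (Faltings then bounds the non-CM rational points) or else has only finitely many non-CM, non-cuspidal rational points due to rank considerations. The main obstacle is this combinatorial explosion: many pairs of mod-$p$ images individually give modular curves with infinitely many rational points, but their fiber product jumps in genus once entanglement constraints from $\Q(E[p_1])\cap\Q(E[p_2])$ are taken into account. Controlling these intersections requires Theorem~\ref{lemos} of Lemos (which in case (\ref{lemos1}) forces surjectivity at all primes $>37$ whenever a cyclic isogeny is present) and Mazur's isogeny theorem to restrict simultaneous isogenies of large degree, together with explicit genus/rank computations for the finitely many remaining fiber-product curves in order to confirm that the list is tight. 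The bookkeeping of these cases, especially at the primes $2$ and $3$ where one must track the exact level $2^k$ or $3^k$ at which non-surjectivity first occurs, is the heart of the argument.
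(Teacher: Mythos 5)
Your plan follows essentially the same route as the paper: classify the finitely many possible non-surjective $p$-adic images from the Zywina/Rouse--Zureick-Brown/Elkies data, eliminate pairs violating the Mazur--Kenku isogeny classification, compute genus and rank for the fiber products of the corresponding modular curves over the $j$-line, rule out length-$3$ types, and then verify exactness of the images (the paper's treatment of \texttt{[8X4,13B]} for $104$, and of \texttt{[8X5,3Nn]} and \texttt{[3Nn,5S4]}, addresses precisely your ``not strictly smaller images'' caveat). One small correction: the genus of a fiber product $X_{G_1}\times_{\mathbb P^1}X_{G_2}$ is a purely group-theoretic quantity computed from $G_1$ and $G_2$ and has nothing to do with entanglements $\Q(E[p_1])\cap\Q(E[p_2])$, which the paper explicitly notes are not detected by the (adic-)type at all.
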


\begin{remark}

We need to assume the Uniformity Conjecture in the proof of the complete classification of $\mathcal{A}_\infty$ at Theorem \ref{thm_serre_constant} in order to ensure that there are infinitely many non-isomorphic curves with the Serre's constant that we want. Without assuming uniformity we can not be sure that for all but finitely many of the $\Qbar$-isomorphism classes the associated Serre's constant does not contain some unexpected prime factors. Using Theorem \ref{lemos}, much of Theorem \ref{thm_serre_constant} can be made independent of the Uniformity Conjecture. In fact, only $4,8,9,11,20$ and $21$ are conditionally under uniformity. 
\end{remark}

In Section \ref{sec:maintheorems}, Theorem \ref{the_comb_modp} classifies the possible combinations of mod $p$ Galois representations that occur for infinitely many non-isomorphic classes of non-CM elliptic curves over $\Q$, and the known cases that appear only finitely. We obtain similar results in Theorem \ref{the_comb_adic} for the possible combinations of maximal non-surjective subgroups of $\GL_2(\Z_p)$. Theorem \ref{thm_serre_constant}   is a direct consequence of the Theorem \ref{the_comb_modp}  and  \ref{the_comb_adic}. Besides classifying which numbers occur as Serre's constant for infinitely many elliptic curve we construct the moduli space for each of the possible combinations of images and determine all of the points on the corresponding modular curves when the genus $\le 2$. Further, for each curve of genus $\geq 3$ we do a point search to find all easily visible points and we compute Serre's constant for each curve in the LMFDB. The resulting data is compiled in tables at the Appendix. These results and the previous search motivate the following conjecture. 

\begin{conjecture}\label{conj:A}
$\mathcal{A}=\mathcal{A}_{\infty}\cup \{17,36,37,44,60,120,168\}$.
\end{conjecture}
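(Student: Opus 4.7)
The plan is to establish the two inclusions of the equality separately. Throughout, I rely on Theorem \ref{thm_serre_constant} for the infinite part and on the classifications announced as Theorem \ref{the_comb_modp} and Theorem \ref{the_comb_adic}, which together reduce the problem to the analysis of finitely many modular covers.

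For the inclusion $\mathcal{A}_\infty \cup \{17,36,37,44,60,120,168\} \subseteq \mathcal{A}$, the first summand is tautological from the definition. For each of the seven sporadic values, I would exhibit a single non-CM elliptic curve $E/\Q$ with $A(E)$ equal to that value. Candidates are readily extracted from the LMFDB tables referenced in the paper by inspecting the known exceptional images: the values $17$ and $37$ would come from the four exceptional $j$-invariants appearing in the Strong Uniformity Conjecture, while $36, 44, 60, 120, 168$ would arise from specific combinations of adically-exceptional primes at $2$, $3$, $5$, $7$ already tabulated in Section \ref{sec:maintheorems}. For each candidate, one computes the exponent $k$ such that $\rho_{E,p^k}$ is non-surjective at each relevant $p$ and then multiplies to verify the value of $A(E)$.

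For the reverse inclusion $\mathcal{A}\subseteq \mathcal{A}_\infty \cup \{17,36,37,44,60,120,168\}$, the strategy is exhaustive case analysis. By Theorems \ref{the_comb_modp} and \ref{the_comb_adic}, under the Uniformity Conjecture there are only finitely many combinations of adically-exceptional primes together with their corresponding maximal non-surjective $p$-adic images that can occur for a non-CM $E/\Q$ and that do not already contribute to $\mathcal{A}_\infty$. For each such combination, the $\Qbar$-isomorphism classes of curves realizing it are parametrized by the non-cuspidal, non-CM $\Q$-points on the fiber product of the corresponding modular curves, one factor per exceptional prime. For each such fiber product I would (i) compute its genus, (ii) determine its set of rational points, and (iii) for each point compute $A(E)$ directly and verify membership in the proposed finite set.

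The main obstacle is step (ii) when the fiber product has genus $\geq 3$: Faltings' theorem guarantees finiteness but is not constructive. Techniques such as Chabauty--Coleman, quadratic Chabauty, the Mordell--Weil sieve, and covering/elliptic-Chabauty methods would have to be deployed on a case-by-case basis, and in some cases they may simply be insufficient to certify that the visible rational points exhaust the set. This is precisely why the statement remains a conjecture: the authors note that for the higher-genus curves in the classification they perform only a point search. A complete proof would therefore require a systematic and rigorous rational-points determination on every fiber product of genus $\geq 2$ arising in the classification, together with unconditional proofs of both the Uniformity Conjecture and Conjecture \ref{13S4}.
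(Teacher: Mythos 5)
Your outline matches the paper's own program for this statement: the paper does not prove Conjecture \ref{conj:A} (it is deferred to \cite{DGJ2}), but its supporting evidence is exactly the reduction you describe --- realizing the seven sporadic values via the known exceptional $j$-invariants and the types \texttt{[4X7,9XE]}, \texttt{[4X3,11B.10.$x$]}, \texttt{[4X3,3B,5S4]}, \texttt{[8X4,3B,5B]}, \texttt{[8X4,3B,7B]}, and then reducing the reverse inclusion to determining the rational points on the finitely many maximal fiber products listed in Table \ref{rema}. You also correctly identify the same obstruction the authors do, namely that for the genus $\ge 3$ curves only a point search has been performed, so the statement remains conjectural.
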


The above conjecture is being treated in an ongoing continuation of this paper at \cite{DGJ2}.

\

\noindent{\textit{\bf Acknowledgements.} } {The authors would like to thank Alina Cojocaru, \'Alvaro Lozano-Robledo, Filip Najman, Andrew Sutherland, Xavier Xarles {and David Zywina} for help in the preparation of this article. We would like to thank to John Cremona for providing access to computer facilities on the Number Theory Warwick Grid at University of Warwick, where the main part of the computations where done. The authors would also like to thank the anonymous referee for useful comments during the review process as well as the editors of this paper for a speedy review.

\

\noindent{\textit{Notation.} } Through out the paper we will refer to conjugacy classes of subgroups of $\GL_2(\Z/p\Z)$ using the notation established by Sutherland in \cite[Section 6.4]{Sutherland2} and used throughout the LMFDB database \cite{lmfdb}. Notice that Zywina \cite{zywina1} uses different notation for such conjugacy classes and in Tables \ref{jmaps} and \ref{table_mod_single} we give the translation between Sutherland's and Zywina's labels. Any specific elliptic curves mentioned in this paper will be referred to by Cremona label \cite{cremonaweb,antwerp} and a link to the corresponding LMFDB page \cite{lmfdb} for the ease of the reader. 

\section{Results for Combinations of Galois representations for non-CM elliptic curves over $\Q$}\label{sec:maintheorems}
One of the goals of this paper is to classify all the possible combinations of mod $p$ Galois representations attached to elliptic curves defined over $\Q$. We wish to point out here that Morrow \cite{morrow} began the study of the possible combinations of mod $n_1n_2$ Galois representations such that $n_1$ is a power of 2 and $n_2<17$ is a prime. Then Camacho-Navarro et al.~\cite{camacho} are continuing this study to the case of subgroups of $\GL_2(\Z/n_1n_2\Z)$ where the corresponding modular curve has low genus, and/or is hyperelliptic. 

In order to establish the appropriate language to study the possible combinations of images that can occur we give the following definitions.

\begin{definition}
Let $E/\Q$ be a non-CM elliptic curve and $S_E$ be the set of exceptional primes of $E$. Let $S\subseteq S_E$ and for each $p\in S$ let $G_p$ be a proper subgroup of $\GL_2(\Z/p\Z)$. We say that $E$ is of {\em exceptional type} (or {\em type} for short) $[G_p : p \in S]$ if for every $p\in S$ the group $G_E(p)$ is conjugate to a subgroup of $G_p$. We say that the {\em exact exceptional type} (or {\em exact type}) of $E$ is $[G_p : p \in S]$ if $S = S_E$ and $G_E(p)$ is conjugate to $G_p$ (not a proper subgroup of $G_p$) for every $p\in S$. 
\end{definition}

Here we consider two possible types $[G_p : p \in S]$ and $[H_p : p \in T]$ equal if $S=T$ and for every $p\in S$, $G_p$ is conjugate to $H_p$ in $\GL_2(\Z/p\Z)$. Similarly, we say that $[G_p : p \in S]$ is a {\em smaller type} than (or {\em subtype of}) $[H_p : p \in T]$ if $S\subseteq T$ and $G_p$ is conjugate to a subgroup of $H_p$ for every $p\in S$. We will refer to $\#S$ as the {\em length} of type $[G_p : p \in S]$. With these conventions, the exact type of $E/\Q$ is unique and equal to $[G_E(p): p \in S_E]$. We also define the level of a given type $[G_p : p \in S]$ to be  $\prod_{p\in S} p.$ We say a type $[G_p : p \in S]$ is {\em maximal} if it is not a subtype of any other type of the same level. 
We point here out that for almost all elliptic curves $E/\Q$ the set $S_E = \emptyset$ (cf. \cite{duke}). In this case we say that the exact type of $E$ is $[\ ]$ and refer to this as the trivial type.

Before moving on we introduce the concept of a modular curve and explore the relationship between these curves and certain types. Let $G$ be a subgroup of $\GL_2(\Z/n\Z)$ satisfying $-I\in G$ and $\det(G)=(\Z/n\Z)^\times$. There is a {\it modular curve $X_G$} associated to $G$. This curve is defined over $\Q$, smooth, projective and geometrically irreducible. Moreover, there is a non-constant morphism $j_G\,:\,X_G\to \mathbb P^1(\Q)$, called the $j$-map of $G$. Moreover, given another group $G\subsetneq G'\subseteq \GL_2(\Z/n\Z)$ satisfying $-I\in G'$ and $\det(G')=(\Z/n\Z)^\times$, there exists a non-constant morphism $X_G\to X_{G'}$. One of the main properties of the pair $(X_G,j_G)$ is that for an elliptic curve $E/\Q$ with $j$-invariant $j_E\notin \{0,1728\}$, $G_E(n)$ is conjugate in $\GL_2(\Z/n\Z)$ to a subgroup of $G$ if and only if $j_E \in j_G(X_G(\Q))\cap \Q$.

{ \begin{remark}\label{rmk:twisting_and_-I}
Here we pause to point out that it is sufficient to consider $G$ containing $-I$ since if $H$ is an index 2 subgroup of $G$ such that $G = \langle H, -I \rangle$, then $X_G\simeq X_H$. This is because every non-cuspidal non-CM point on $X_G$ corresponds to a $\Qbar$-isomorphism class of elliptic curves such that $G_E(n)$ is conjugate to a subgroup of $G$. Inside of each of these $\Qbar$-isomorphism classes there is at least one $\Q$-isomorphism class (or twist) of curves whose image is actually contained in $H$. Thus, classifying the rational points on $X_H$ amounts to classifying the points on $X_G$ and then determining which twists in the $\Qbar$-isomorphism classes actually have $G_E(n)$ in $H$. {We also point out here for every elliptic curve with rational $j$-invariant, there is an elliptic curve with the same $j$-invariant such that $-I$ is in the image of the adelic Galois representation associated to the new elliptic curve.}
\end{remark}
}

The relationship between modular curves arising from the fact that given a type $[G_p : p \in S]$ of level $N$ we can associate a group $G\subseteq \GL_2(\Z/N\Z)$, where $G$ is the largest group (by containment) such that $\pi_p(G) = {\pm} G_p$ for all $p\in S$. Here $\pi_p: \GL_2(\Z/N\Z) \to \GL_2(\Z/p\Z)$ is the standard componentwise reduction map which is well-defined since $p$ divides $N$ by construction. { We point out here that the condition is that $\pi_p(G) = \pm G_p$ and not just that $\pi_p(G) = G_p$ so that we can ensure that $-I$ is in the group associated to a given type.} Then, associated to the group $G$ is the modular curve $X_G$ whose { non-cuspidal and non-CM} rational points correspond to $\Qbar$-isomorphism classes of elliptic curves over $\Q$ of type $[G_p : p \in S]$. 

{ In the other direction, given a group $G\subseteq \GL_2(\Z/N\Z)$ with $-I\in G$ and $\det(G) =(\Z/N\Z)^\times$, we let $S_G$ be the set of primes $p$ such that $p$ divides $N$ and $\pi_p(G)\neq \GL_2(\Z/p\Z)$. Then we can associate a type $[\pi_p(G) : p\in S_G]$ of level $\prod_{p\in S_G}(p)$. Again, the $\Qbar$-isomorphism classes of curves coming from points in $X_G(\Q)$ all have type $[\pi_p(G) : p\in S_G]$. In fact, every $\Qbar$-isomorphism classes of curves of type $[\pi_p(G) : S_G]$ arises as a point on $X_G$ exactly when $N$ is squarefree and $G$ is maximal among the groups of level $N$ corresponding to the type $[\pi_p(G) : p\in S_G]$.}

This association of types with groups, and hence modular curves, will be extremely useful in studying what combinations of images can occur. We will use it intimately in the remaining sections.  \\

Next we give a definition of {\em adic-type} and {\em exact adic-type}.

\begin{definition}
Let $E/\Q$ be a non-CM elliptic curve and $S_E^\infty$ be the set of adically exceptional primes. Let $S\subseteq S^\infty_E$. For each $p\in S$, let $G_p$ be a proper subgroup of $\GL_2(\Z_p)$.  We say that $E$ is of {\em adically-exceptional type} (or {\em adic-type} for short) $[G_p : p \in S]$ if for every $p\in S$ the group $G_E^\infty(p)$ is conjugate to a subgroup of $G_p$. We say that the {\em exact adically-exceptional type} (or {\em exact adic-type}) of $E$ is $[G_p : p \in S]$ if $S = S_E^\infty$ and $G_E^\infty(p)$ is conjugate to $G_p$ (not a proper subgroup group of $G_p$) for every $p\in S$. 
\end{definition}

We adopt similar conventions as above to compare two adic-types so that everything is well-defined changing what is necessary.  For a given adic-type $[G_p : p \in S]$ we define the {\em level} of that type to be $\prod_{p\in S} p^{k_p}$ where $k_p$ is the minimum integer such that the standard componentwise reduction map $G_p\to \GL_2(\Z/p^{k_p}\Z)$ is not surjective. 
For the sake of notational brevity, for each $G_p\subseteq \GL_2(\Z_p)$ that occurs in an adic-type we will denote $G_p$ by a subgroup $\tilde{G}_p\subseteq \GL_2(\Z/p^k\Z)$ for some $k$ such that $G_p = \pi^{-1}(\tilde{G}_p)$ where $\pi:\GL_2(\Z_p) \to \GL_2(\Z/p^k\Z)$ is again the standard componentwise reduction map. 

\begin{remark}
Let $E/\Q$ be an elliptic curve and $\rho_E:\Gal(\Qbar/\Q) \to \GL_2(\widehat{\Z})$ be the adelic Galois representation associated to $E$ constructed choosing bases for $E[n]$ compatible with divisibility and taking inverse limits. It is tempting to think that knowing the exact adic-type of $E$ is equivalent to knowing the image of $\rho_E$ upto conjugation, but this is not the case. The gap is that the exact adic-type does not contain any information about the entanglements between the field of definition of each Tate module. Serre showed that there must be {\em some} entanglement between these fields using the Weil paring and the Kronecker--Weber theorem and so we usually cannot recover the image of the adelic Galois representation attached to $E$ just from the exact adic-type of $E$.
\end{remark}

The following theorem gives the set of possible exceptional types that occur for infinitely many non-isomorphic classes of non-CM elliptic curves over $\Q$, and the known cases that appear only finitely.
\begin{theorem}\label{the_comb_modp}
{ (A) The following nontrivial exceptional types occur for infinitely many non-isomorphic classes of non-CM elliptic curves over $\Q$:
}
\begin{itemize}
\item  $[\, G_p\, ]$ for any $G_p$ in Tables \ref{jmaps} and \ref{table_mod_single} except {\rm\texttt{7Ns.3.1}, \texttt{11B.10.4}, \texttt{11B.10.5}, \texttt{13S4}, \texttt{17B.4.2}, \texttt{17B.4.6}, \texttt{37B.8.1}} and {\rm\texttt{37B.8.2}} (that appear only for finitely many $\Q$-isomorphic classes).
 \item $[\,G_2 \,,\,G_p \,]$, where $G_2$ is:
\begin{itemize}
\item[$\star$]  {\rm\texttt{2B}} and $G_p$ is {\rm\texttt{3B}, \texttt{3B.1.1}, \texttt{3B.1.2}, \texttt{3Cs}, \texttt{3Cs.1.1}, \texttt{3Nn}, \texttt{3Ns}, \texttt{5B},  \texttt{5B.4.1}, \texttt{5B.1.1},  \texttt{5B.1.4}, \texttt{5B.4.2}, \texttt{5B.1.2}}, or {\rm\texttt{5B.1.3}}.
\item[$\star$] {\rm\texttt{2Cn}} and $G_p$ is {\rm\texttt{3B}, \texttt{3B.1.1}, \texttt{3B.1.2}, \texttt{5S4}, \texttt{7B}, \texttt{7B.2.1}}, or {\rm\texttt{7B.2.3}}.
\item[$\star$] {\rm\texttt{2Cs}} and $G_p$ is  {\rm\texttt{3B}, \texttt{3B.1.1}}, or {\rm\texttt{3B.1.2}}.
\end{itemize}
\item  $[\,G_3\,,\,G_p\,]$, where $G_3$ is  {\rm\texttt{3Nn}} and  $G_p$ is {\rm\texttt{5B}}, {\rm\texttt{5Ns}}, {\rm\texttt{5Nn}}, or {\rm\texttt{7Nn}}.
\end{itemize}
(B) For the following exceptional types there are only a finite number of $\Qbar$-isomorphic classes:
\begin{itemize}
\item  $[\, G_p\, ]$ where $G_p$ is {\rm\texttt{7Ns.3.1}, \texttt{11B.10.4}, \texttt{11B.10.5}, \texttt{13S4}, \texttt{17B.4.2}, \texttt{17B.4.6}, \texttt{37B.8.1}} or {\rm\texttt{37B.8.2}}. 
\item $[\,G_3 \,,\,G_p\,]$, where $G_3$ is:
\begin{itemize}
\item[$\star$] {\rm\texttt{3B}} and  $G_p$ is {\rm\texttt{5B}, \texttt{5B.4.1}, \texttt{5B.1.1}, \texttt{5B.4.2}, \texttt{5B.1.2}, \texttt{5S4}, \texttt{7B}, \texttt{7B.2.1}}, or {\rm\texttt{7B.2.3}}.
\item[$\star$] {\rm\texttt{3B.1.1}} or {\rm\texttt{3B.1.2}} and $G_p$ is {\rm\texttt{5B.1.3}, \texttt{5B.1.4}, \texttt{5B.4.1}, \texttt{5B.4.2}, \texttt{5S4}, \texttt{7B}, \texttt{7B.2.1}}, or {\rm\texttt{7B.2.3}}.
\item[$\star$] {\rm\texttt{3Ns}} and  $G_p$ is {\rm\texttt{5B}}.
\end{itemize}
\end{itemize}
{Moreover, we give unconditionally the moduli space for each of the possible exceptional types (see Tables \ref{jmaps}, \ref{table_mod_single}, \ref{Eq_finemoduli}, \ref{jmapsj0coarse}, \ref{jmapsj0fine}, \ref{tab:gen1PosRank}, \ref{tab:gen1r0}, \ref{tab:gen1r0fine}), except for the cases of level $13$, $17$ and $37$ which are conditionally under Conjecture \ref{13S4} and the Strong Uniformity Conjecture.}
\end{theorem}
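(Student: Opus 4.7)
The plan is to work type by type, translating each exceptional type $[G_p:p\in S]$ into a modular curve via the correspondence explained in Section \ref{sec:maintheorems}: given a type, build the fiber product group $G\subseteq\GL_2(\Z/N\Z)$ with $N=\prod_{p\in S} p$ and $\pi_p(G)=\pm G_p$ maximal, then study the rational points of $X_G$. A type produces infinitely many non-isomorphic curves over $\overline{\Q}$ exactly when $j_G(X_G(\Q))$ is infinite, and it produces only finitely many when $j_G(X_G(\Q))$ is finite modulo the cusps and CM points.

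\textbf{Step 1: length-one types.} For $[G_p]$ the modular curve $X_{G_p}$ already appears in Tables \ref{jmaps} and \ref{table_mod_single}. I would recall from Zywina \cite{zywina1} (and the conditional results for $p=13,17,37$) which of these $X_{G_p}$ have genus $0$ with a rational point or genus $1$ with positive Mordell--Weil rank—these give the infinite families—and which have either genus $\ge 2$ or genus $1$ with rank $0$, giving the finite list \texttt{7Ns.3.1}, \texttt{11B.10.4}, \texttt{11B.10.5}, \texttt{13S4}, \texttt{17B.4.2}, \texttt{17B.4.6}, \texttt{37B.8.1}, \texttt{37B.8.2}. The finiteness in the last two families requires Mazur's isogeny theorem (there are no $\Q$-rational isogenies of degree $17$ or $37$ beyond the known $j$-invariants), and the genus-$1$ cases are handled by a finite Mordell--Weil computation.

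\textbf{Step 2: length-two types.} Here I would compute the fiber product $X_G = X_{G_2}\times_{X(1)} X_{G_p}$ (or $X_{G_3}\times_{X(1)}X_{G_p}$) for each admissible pair. The genus of such fiber products is computed via the Riemann--Hurwitz formula applied to the $j$-map; equivalently, from the index and ramification data of the group $G\subseteq\GL_2(\Z/N\Z)$. The types listed in part (A) are precisely those whose fiber product has genus $0$ (parametrizable by $\PP^1$, after locating a rational point) or genus $1$ with positive rank; in these cases an explicit model for $X_G$ and its $j$-map can be written down by composing the tables of $j$-maps for the factors. The types in part (B) are those whose fiber product has genus $\ge 2$, or genus $1$ with rank $0$; finiteness then follows from Faltings' theorem (resp.\ finite Mordell--Weil). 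Where possible I would sharpen ``finite'' to ``explicit finite list'' using Chabauty--Coleman, Mordell--Weil sieving, or direct parametrization, as reflected in the auxiliary tables (\ref{Eq_finemoduli}, \ref{jmapsj0coarse}, \ref{jmapsj0fine}, \ref{tab:gen1PosRank}, \ref{tab:gen1r0}, \ref{tab:gen1r0fine}).

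\textbf{Step 3: ruling out other combinations.} The listed combinations are those compatible with the known constraints: congruences between mod-$p$ representations coming from the Weil pairing (forcing $\det=\chi_{\mathrm{cyc}}$), Lemos's Theorem \ref{lemos} (which forbids any mod-$p$ image in a Borel or split Cartan normalizer coexisting with exceptional primes $p>37$), and the Strong Uniformity Conjecture together with Conjecture \ref{13S4} (which eliminate all non-trivial types at levels $13, 17, 37$ other than those explicitly listed). I would argue that every omitted length-two combination is either forbidden by these results, or else its associated modular curve has no rational points outside cusps and CM points—the latter being verified case by case via the tables.

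\textbf{Main obstacle.} The hardest part is the explicit determination of $j_G(X_G(\Q))$ for fiber-product modular curves of genus $\ge 2$, where Faltings gives finiteness but not effectivity. For each such $X_G$ I would need a concrete model, a bound or parametrization of its rational points (Chabauty when rank$<$genus, Mordell--Weil sieve otherwise), and a rigorous argument that no rational point outside the exhibited finite list exists. A secondary obstacle is organizing the case analysis cleanly so that every combination of $(G_p)_{p\in S}$ with $\#S\le 2$ is shown either to fall in part (A) or (B); combinations with $\#S\ge 3$ do not appear in the statement and must be ruled out on the basis that the associated modular curve has no non-CM, non-cuspidal rational points, which is the content of the moduli-space computation referenced at the end of the theorem.
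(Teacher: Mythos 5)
Your proposal follows essentially the same route as the paper: translate each type into a fiber-product modular curve, prune impossible pairs (the paper does this first sieve via the Kenku--Mazur classification of rational cyclic isogenies and torsion, cutting down to 206 pairs), and then sort the surviving curves by genus and Mordell--Weil rank, with explicit $j$-maps in genus $0$, Mordell--Weil computations in genus $1$, Chabauty in genus $2$, and Faltings plus point searches in higher genus. The one bookkeeping point you gloss over is that a genus-one positive-rank fiber product such as \texttt{[3Nn,5S4]} is deliberately omitted from the list in (A) because its $j$-map factors through the isogenous modular curve of the strictly smaller type \texttt{[3Nn,5Ns]}, so every curve of that type already occurs under a smaller entry; your criterion ``precisely the genus-$0$ and genus-$1$ positive-rank fiber products'' would otherwise list it redundantly.
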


\begin{corollary}\label{the_comb_modp_COR} 
Assuming Serre's Uniformity Conjecture, the set of nontrivial exact types such that there exist infinitely many non-isomorphic classes of non-CM elliptic curves over $\Q$ of that type correspond to the cases that appear at Theorem \ref{the_comb_modp} (A).
\end{corollary}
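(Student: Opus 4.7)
The plan is to prove both inclusions of the claimed equality. One direction is immediate: every exact type is in particular a type, so if an exact type $T$ occurs for infinitely many $\Qbar$-isomorphism classes of non-CM elliptic curves over $\Q$, then $T$ viewed as a type also occurs infinitely often, and Theorem~\ref{the_comb_modp} forces $T$ to appear in part~(A) (it cannot appear only in (B), which is finite).

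For the converse, fix a type $T = [G_p : p \in S]$ listed in~(A) and let $X_T$ be the associated modular curve of level $N = \prod_{p \in S} p$. By hypothesis, $X_T(\Q)$ contains infinitely many non-cuspidal non-CM rational points. A curve $E$ of type $T$ fails to have \emph{exact} type $T$ precisely when either $G_E(p) \subsetneq G_p$ for some $p \in S$, or $E$ has an additional exceptional prime $q \notin S$. Under Serre's Uniformity Conjecture together with Tables~\ref{jmaps} and~\ref{table_mod_single}, the exceptional primes are confined to $\{2,3,5,7,11,13,17,37\}$ and the admissible images at each such prime form a finite list, so only finitely many finer exact types $T'$ need to be considered. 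For each such $T'$ the corresponding modular curve $X_{T'}$ admits a non-constant covering $X_{T'} \to X_T$ of degree $>1$, since the group corresponding to $T'$, pulled back to a common level and accounting for $-I$ as in Remark~\ref{rmk:twisting_and_-I}, is properly contained in the one corresponding to $T$.

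The conclusion now follows from Hilbert's irreducibility theorem: since $X_T(\Q)$ is infinite, the curve $X_T$ is either $\PP^1$ or, using N\'eron's extension of Hilbert irreducibility to abelian varieties, an elliptic curve of positive Mordell--Weil rank. In either case, the image of $X_{T'}(\Q)$ in $X_T(\Q)$ under each degree $>1$ cover is a thin set. A finite union of thin sets remains thin and therefore cannot exhaust $X_T(\Q)$, so the complement is infinite and provides infinitely many $\Qbar$-isomorphism classes of non-CM elliptic curves whose exact type is precisely $T$.

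The principal obstacle is verifying that every cover $X_{T'} \to X_T$ arising from a finer type really has degree at least two. This requires keeping careful track of proper containments of subgroups of $\GL_2(\Z/M\Z)$ at the common level $M$ obtained by multiplying in the extra primes of $T'$ and by refining the subgroups at primes already in $S$, as well as handling the presence of $-I$ and the twisting issue mentioned in Remark~\ref{rmk:twisting_and_-I}. Once each cover is shown to have degree $\geq 2$, the appeal to Hilbert irreducibility on $\PP^1$ or on a positive-rank elliptic curve is standard.
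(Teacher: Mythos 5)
Your genus-$0$ half is sound and is essentially what the paper does: for $X_T\simeq\PP^1$ the paper invokes \cite[Lemma 3.5]{zywina1} (a Hilbert-irreducibility statement) to get infinitely many $\Qbar$-isomorphism classes with image exactly $\pm G$, after first enlarging the level to absorb all primes that could be exceptional under uniformity. The genuine gap is in your genus-$1$ case. The claim that a finite union of thin sets cannot exhaust $E(\Q)$ for $E$ an elliptic curve of positive rank is false: $E(\Q)$ is itself thin, since $E(\Q)=\bigcup_i\bigl(P_i+2E(\Q)\bigr)$ for coset representatives $P_i$ of $E(\Q)/2E(\Q)$, and each such coset is the image of $E(\Q)$ under the degree-$4$ map $\tau_{P_i}\circ[2]$. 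More to the point, a degree-$d$ isogeny $\pi\colon E'\to E$ with $d\ge 2$ can have image of $j$-map equal to that of $X_T$, so that \emph{every} curve of type $T$ already has the strictly smaller type $T'$ and the exact type $T$ never occurs. This is not a hypothetical failure: the paper exhibits exactly this phenomenon for \texttt{[3Nn,5S4]} versus \texttt{[3Nn,5Ns]} (the $3$-isogenous rank-$1$ curves \texttt{225a2} and \texttt{225a1}) and for \texttt{[8X5,3Nn]} versus \texttt{[8X17,3Nn]} (\texttt{576a3} and \texttt{576a1}), where one checks $j'=j\circ\varphi$ and ${\rm Im}(j)={\rm Im}(j')$. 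Your argument, applied verbatim to \texttt{[3Nn,5S4]}, would wrongly conclude that this exact type occurs infinitely often; it is precisely because of this failure that \texttt{5S4} is missing from the \texttt{3Nn} list in Theorem \ref{the_comb_modp} (A).

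Consequently, for the positive-rank genus-$1$ types in (A) (those involving \texttt{3Nn} and \texttt{11Nn}) you cannot appeal to any thin-set principle; you must do what the paper does in the proof of Corollary \ref{the_comb_modp_COR}: enumerate the finitely many relevant proper subtypes (finitely many by uniformity) and verify, case by case, that each associated modular curve has genus $\ge 2$ or is genus $1$ of rank $0$ — hence contributes only finitely many $\Qbar$-isomorphism classes — the only exceptions being the two isogeny pairs above, which are handled separately at the start of Section \ref{proofs} and excluded from list (A). This computational verification is not a routine technicality that can be replaced by a general irreducibility theorem; it is the substantive content of the corollary in the genus-$1$ case.
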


The next result is similar to Theorem \ref{the_comb_modp} but we do not obtain a complete characterization, instead we obtain only maximal adically exceptional types that can occur.

\begin{theorem}\label{the_comb_adic}
{(A) The following list of maximal adically-exceptional types occur for infinitely many non-isomorphic classes of non-CM elliptic curves over $\Q$:
}
\begin{itemize}
\item $[\, G_p\, ]$ where $G_p$ is {\rm\texttt{2B}, \texttt{2Cn}, \texttt{3B}, \texttt{3Nn}, \texttt{5Nn}, \texttt{5B}, \texttt{5S4}, \texttt{7Ns}, \texttt{7Nn}, \texttt{7B}, \texttt{11Nn}} or {\rm\texttt{13B}}.
\item $[\, G_p\, ]$ where $G_p$ is {\rm\texttt{4X3}, \texttt{4X7}, \texttt{8X4}, \texttt{8X5}}, or  {\rm\texttt{9XE}}. 
\item $[\,G_2\,,\,G_p\,]$, where $G_2$ is:
\begin{itemize}
\item[$\star$]   {\rm\texttt{2B}} and $G_p$ is {\rm\texttt{3B}, \texttt{3Nn}}, or {\rm\texttt{5B}}.
\item[$\star$]   {\rm\texttt{2Cn}} and $G_p$ is {\rm\texttt{3B}, \texttt{5S4}}, or {\rm\texttt{7B}}.
\item[$\star$]   {\rm\texttt{4X3}} and $G_p$ is {\rm\texttt{3B}, \texttt{5S4}}, or {\rm\texttt{7B}}.
{
\item[$\star$]  {\rm\texttt{4X7}} and $G_p$ is {\rm\texttt{3Nn}}, or {\rm\texttt{5S4}}.
}
\item[$\star$]  {\rm\texttt{8X4}} and $G_p$ is {\rm\texttt{3B}, \texttt{5S4}, \texttt{7B}, \texttt{5B}}, or {\rm\texttt{13B}}.
{
\item[$\star$]  {\rm\texttt{8X5}} and $G_p$ is {\rm\texttt{3B}, \texttt{5S4}, {\rm\texttt{5Nn}}}, or {\rm\texttt{7B}}.
}
\end{itemize}

\item $[\,G_3\,,\,G_p\,]$, where $G_3$ is {\rm\texttt{3Nn}} and $G_p$ is  {\rm\texttt{5B}}, {\rm\texttt{5Nn}}, or {\rm\texttt{7Nn}}.
\end{itemize}
(B) There is only a finite number of $\Qbar$-isomorphic classes of elliptic curves with the following maximal adic-types:
\begin{itemize}
\item $[\, G_p \,]$ where $G_p$ is {{\rm\texttt{13S4}, \texttt{17B.4.2}, \texttt{17B.4.6}, \texttt{37B.8.1}} or {\rm\texttt{37B.8.2}}.}
\item $[\,G_2\,,\,G_p\,]$, where $G_2$ is:
\begin{itemize}
\item[$\star$]  {{\rm\texttt{4X3}} and $G_p$ is {\rm\texttt{11B.10.4}}, or {\rm\texttt{11B.10.5}}.}
\item[$\star$] {{\rm\texttt{4X7}} and $G_p$ is {\rm\rm\texttt{9XE}, \texttt{3B}, \texttt{5B}}, or {\rm \texttt{7B}}.}
\item[$\star$] {\rm\texttt{8X5}} and $G_7$ is {\rm\texttt{7Ns.3.1}}.
\end{itemize}
\item {$[\,G_3\,,\,G_p\,]$, where $G_3$ is {\rm\texttt{3B}} and $G_p$ is {\rm\texttt{5B}, \texttt{5S4}}, or {\rm\texttt{7B}}.}
\item $[\,G_2\,,\,G_3\,,\,G_5\,]$, where $G_2$ (resp. $G_3$, $G_5$) is {\rm\texttt{4X3}} (resp. {\rm\texttt{3B},  \texttt{5S4}}).
\item $[\,G_2\,,\,G_3\,,\,G_5\,]$, where $G_2$ (resp. $G_3$, $G_5$) is {\rm\texttt{8X4}} (resp. {\rm\texttt{3B},  \texttt{5B}}).
\item $[\,G_2\,,\,G_3\,,\,G_7\,]$, where $G_2$ (resp. $G_3$, $G_7$) is {\rm\texttt{8X4}} (resp. {\rm\texttt{3B},  \texttt{7B}}).
\end{itemize}
{Moreover, we give unconditionally the moduli space for each of the possible maximal adic-types above (see Tables \ref{jmaps}, \ref{tab:adic_groups}, \ref{tab:jconstant}, \ref{jmapsj0coarse}, \ref{tab:gen1PosRank}, \ref{tab:gen1r0} and Remark \ref{triples}), except (maybe) the case {\rm\texttt{[4X7,9XE]}} and for the cases of level $13$, $17$ and $37$ that are conditionally under Conjecture \ref{13S4} and the Strong Uniformity Conjecture.}
\end{theorem}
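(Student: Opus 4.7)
The strategy is to classify maximal adic-types by combining the mod-$p$ classification of Theorem \ref{the_comb_modp} with the finite list of adic lifts at $p=2$ and $p=3$, then to analyze the fibre-product modular curve associated to each candidate. First I would enumerate candidates: by Remark \ref{remadic}, for $p \ge 5$ the image $G_E^\infty(p)$ is determined up to conjugacy in $\GL_2(\Z_p)$ by $G_E(p)$, so the only candidates at $p \ge 5$ are those arising in Theorem \ref{the_comb_modp}. For $p = 2, 3$ we additionally allow the adic groups \texttt{4X3}, \texttt{4X7}, \texttt{8X4}, \texttt{8X5}, \texttt{9XE} of Table \ref{tab:adic_groups} (due to Rouse--Zureick-Brown at $p=2$ and Elkies/Sutherland--Zywina at $p=3$). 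Taking tuples of such groups at distinct primes yields a finite list of candidate maximal adic-types; many are discarded immediately because Theorem \ref{the_comb_modp} already shows the underlying mod-$p$ type carries no non-CM rational points.

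For each surviving candidate $[G_p : p \in S]$ of level $N = \prod_{p \in S} p^{k_p}$, I construct the fibre-product group $G \subseteq \GL_2(\Z/N\Z)$ defined as the largest subgroup with $\pi_{p^{k_p}}(G) = \pm G_p$ for each $p \in S$; here I use Remark \ref{rmk:twisting_and_-I} to justify inserting $\pm I$. The non-cuspidal, non-CM rational points on the associated modular curve $X_G$ parametrize exactly the $\Qbar$-isomorphism classes of non-CM elliptic curves $E/\Q$ of adic-type subordinate to $[G_p : p \in S]$. To sort the candidate into (A) or (B), I compute the genus of $X_G$ and, in positive genus, the Mordell--Weil rank (genus $1$) or invoke Faltings (genus $\ge 2$). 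The candidate lies in (A) precisely when $X_G$ has genus $0$ with a rational point (giving a $j$-parametrization by $\PP^1_\Q$) or genus $1$ with positive rank; it lies in (B) when genus is $\ge 2$, or genus is $1$ and rank is $0$. Genus-$0$ maps are produced explicitly, genus-$1$ curves are handled by a Weierstrass model plus descent and LMFDB lookup when rank $0$, and higher-genus curves are treated using the preexisting analyses of \cite{morrow,camacho} combined with point searches and, where needed, elliptic Chabauty on bielliptic or trigonal quotients.

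The main obstacle is the fibre-product modular curves of combined level, which can have large gonality and resist direct rational-point methods. The most delicate instance is the type $\texttt{[4X7,9XE]}$, which is why the theorem explicitly flags that its moduli space cannot be pinned down by the methods used here and only a point-search plus a genus bound rules out an infinite family; this is the one genuine gap in the unconditional part of the proof. The cases at primes $13$, $17$, and $37$ inherit the conditional status of Conjecture \ref{13S4} and the Strong Uniformity Conjecture, since the mod-$p$ classification at those primes is itself conditional; in particular the types $\texttt{[\cdot,13S4]}$, $\texttt{[\cdot,17B.4.2]}$, $\texttt{[\cdot,17B.4.6]}$, $\texttt{[\cdot,37B.8.1]}$, $\texttt{[\cdot,37B.8.2]}$ can only be placed into (B) modulo those conjectures. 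Once these flagged cases are set aside, every remaining candidate is sorted unconditionally into (A) or (B) by the genus/rank dichotomy above, and the moduli data referenced in Tables \ref{jmaps}, \ref{tab:adic_groups}, \ref{tab:jconstant}, \ref{jmapsj0coarse}, \ref{tab:gen1PosRank}, and \ref{tab:gen1r0} is produced by the same fibre-product construction, completing the classification.
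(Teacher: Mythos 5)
Your overall strategy --- enumerate candidate types from the mod-$p$ classification together with the finitely many $2$-adic and $3$-adic lifts of Table \ref{tab:adic_groups}, form the fibre-product group $G$ and its modular curve $X_G$, and sort by genus and Mordell--Weil rank --- is exactly the paper's approach (Sections \ref{outline} and \ref{proofs}). However, there is a genuine gap in your sorting criterion. You assert that a candidate lies in (A) \emph{precisely when} $X_G$ has genus $0$ with a rational point or genus $1$ with positive rank. This dichotomy fails: the modular curves attached to \texttt{[8X5,3Nn]} and \texttt{[3Nn,5S4]} are elliptic curves of positive rank (Cremona \texttt{576a3} and \texttt{225a2}), yet neither type appears in part (A). The reason, which the paper must argue separately, is that each of these curves is isogenous to the modular curve of a strictly smaller type (\texttt{[8X17,3Nn]} and \texttt{[3Nn,5Ns]} respectively, with Cremona labels \texttt{576a1} and \texttt{225a1}), and the $j$-maps satisfy $j' = j\circ\varphi$ for the $3$-isogeny $\varphi$, so the two curves have \emph{identical} $j$-images. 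Consequently every elliptic curve of type \texttt{[8X5,3Nn]} already has $2$-adic image inside \texttt{8X17}$\subseteq$\texttt{2B}, and every curve of type \texttt{[3Nn,5S4]} has mod $5$ image inside \texttt{5Ns}; no curve has either as its exact type. Your proposal, as written, would wrongly place both types in (A).

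A related, smaller omission affects part (B): the finite lists there are not obtained purely from the genus/rank computation either. For instance, the genus-$1$ rank-$0$ curve for \texttt{[4X7,3B]} has non-CM rational points, but they all correspond to $j$-invariants already arising on the genus-$0$ curve for \texttt{[4X3,3B]} with smaller $2$-adic image, so \texttt{[4X7,3B]} is excluded; similarly the point found on the genus-$3$ curve for \texttt{[8X5,7Ns]} is accounted for by \texttt{[8X5,7Ns.3.1]}. Also note that the genus-$2$ case is not uniformly handled by Chabauty: the Jacobian of the curve for \texttt{[4X7,5Nn]} has rank $2$, and the paper needs a two-cover descent followed by elliptic Chabauty over a cubic field to determine its rational points. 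Finally, you do not invoke anything guaranteeing that infinitely many of the rational points on a genus-$0$ or positive-rank genus-$1$ curve actually give image \emph{equal} to $G$ rather than a proper subgroup; the paper uses \cite[Lemma 3.5]{zywina1} (a Hilbert-irreducibility type statement) for this. Your identification of \texttt{[4X7,9XE]} and the levels $13$, $17$, $37$ as the flagged cases is correct and matches the paper.
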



{
\begin{corollary}\label{the_comb_adic_COR} 
Assuming Serre's Uniformity Conjecture, any adically-exceptional type that has infinitely many elliptic curve over $\Q$ of exactly that type must be a subtype of one of those listed in the cases that appear at Theorem \ref{the_comb_adic} (A).
\end{corollary}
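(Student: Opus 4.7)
The plan is to deduce this corollary directly from Theorem \ref{the_comb_adic} by extending any candidate exact adic-type to a maximal one that must then appear in list (A), so the work is essentially formal once the theorem itself is granted.

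Suppose $T = [G_p : p \in S]$ is an exact adic-type with infinitely many non-isomorphic non-CM elliptic curves over $\Q$ of exactly that type. First I would observe that there are \emph{a fortiori} infinitely many curves of the (not necessarily exact) type $T$. Then I would enlarge $T$ to a maximal adic-type $T^{\max} = [H_p : p \in S]$ at the same level by replacing each $G_p$ with a maximal proper subgroup $H_p$ of $\GL_2(\Z_p)$ containing it; such an $H_p$ exists because $\GL_2(\Z/p^{k_p}\Z)$ is finite. By construction $T$ is a subtype of $T^{\max}$, and so any curve of type $T$ is also of type $T^{\max}$, producing infinitely many curves of type $T^{\max}$.

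Next I would invoke Serre's Uniformity Conjecture together with Theorem \ref{T:Mazur-Serre-BPR} and Remark \ref{remadic}: under this conjecture the set of primes at which $G_E^{\infty}(p)$ can fail to be surjective lies in $\{2,3,5,7,11,13,17,37\}$, and the possible proper subgroups at each such prime (up to conjugacy) are exactly those catalogued in Tables \ref{jmaps}, \ref{table_mod_single}, and \ref{tab:adic_groups}. Consequently, the collection of possible maximal adic-types at any level is finite, and Theorem \ref{the_comb_adic} gives a complete classification of this collection, partitioned into (A) those realized by infinitely many $\Qbar$-isomorphism classes and (B) those realized only finitely often.

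Since $T^{\max}$ has infinitely many associated curves, it cannot belong to (B), so it must appear in (A). Thus $T$ is a subtype of some adic-type listed in Theorem \ref{the_comb_adic}(A), as claimed. The only substantive ingredient is the exhaustiveness of the classification in Theorem \ref{the_comb_adic}, which is precisely where Serre's Uniformity Conjecture is used to cap the set of possible primes and candidate $p$-adic images; once this is in hand, the subtype extension argument above is purely formal and the corollary follows.
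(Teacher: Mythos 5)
Your overall strategy --- pass from the exact adic-type to a maximal adic-type over the same set of primes, then invoke the classification --- is the same reduction the paper makes; its own proof is a one-line appeal to the genus and rank computations of Section \ref{outline} together with the argument of Corollary \ref{the_comb_modp_COR}. The one step you should repair is the inference ``$T^{\max}$ cannot belong to (B), so it must appear in (A),'' which presupposes that lists (A) and (B) of Theorem \ref{the_comb_adic} together exhaust all maximal adic-types. The theorem does not assert this: the paper explicitly says it does not obtain a complete characterization, and Table \ref{rema} contains maximal types (those whose modular curves have genus $>2$, where only a bounded-height point search was carried out) that belong to neither list. The dichotomy you actually need is ``in (A) or realized only finitely often,'' and its correct justification is the genus/rank computation rather than membership in list (B): every maximal adic-type outside (A) has an associated modular curve that is of genus $\geq 2$, of genus $1$ with Mordell--Weil rank $0$, or of genus $0$ without a rational point, so by Faltings' theorem (resp.\ the Mordell--Weil theorem) it has only finitely many rational points and hence only finitely many $\Qbar$-isomorphism classes of curves of that type, even when the exact finite list is not known. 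One also needs Lemma \ref{3primes} (or the genus-$2$, rank-$0$ Chabauty computation behind it) to handle maximal types of length $\geq 3$, since your same-level maximalization does not reduce those to pairs. With the finiteness attributed to these computations instead of to list (B), your argument coincides with the paper's.
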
}


\section{Invariance of the Serre's constant under quadratic twists}

The purpose of this section is to prove that $A(E)$ is an invariant of the isomorphism class of a non-CM elliptic curve.
\begin{proposition}\label{prop:surj_twist}
Let $E/\Q$ be a non-CM elliptic curve and let $p$ be a prime. If $E'/\Q$ is a quadratic twist of $E$, then ${\rho}_{E,p}$ is surjective if and only if ${\rho}_{E',p}$ is surjective. 
\end{proposition}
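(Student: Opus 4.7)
The plan is to reduce the statement to a clean group-theoretic fact about index-$2$ subgroups of $\GL_2(\F_p)$ by exploiting the twisting relationship between $\rho_{E,p}$ and $\rho_{E',p}$.

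Write $E' = E^d$ for some squarefree $d \in \Z$, and let $\chi_d: \Gal(\Qbar/\Q) \to \{\pm 1\}$ be the quadratic character associated to $\Q(\sqrt{d})$. A direct calculation with the isomorphism $E^d \to E$ defined over $\Q(\sqrt d)$ yields
$$
\rho_{E',p}(\sigma) \;=\; \chi_d(\sigma)\cdot\rho_{E,p}(\sigma), \qquad \sigma \in \Gal(\Qbar/\Q),
$$
where $\chi_d(\sigma) \in \{\pm I\}$ is viewed as lying in the center of $\GL_2(\F_p)$. If $p = 2$, then $-I = I$ in $\GL_2(\F_2)$, so the two representations coincide and the claim is immediate; I would assume $p$ is odd from here on.

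Set $G = \rho_{E,p}(\Gal(\Qbar/\Q))$ and $G' = \rho_{E',p}(\Gal(\Qbar/\Q))$. The formula above immediately gives $\langle G, -I\rangle = \langle G', -I\rangle$. Thus if $G = \GL_2(\F_p)$, then $G'$ has index $1$ or $2$ in $\GL_2(\F_p)$. The main step is then the observation that the only subgroup of index $2$ in $\GL_2(\F_p)$ (for odd $p$) is $\det^{-1}((\F_p^*)^2)$, and that this subgroup must contain $-I$, simply because $\det(-I) = 1$ is a square. Uniqueness follows from the fact that $[\GL_2(\F_p), \GL_2(\F_p)] = \SL_2(\F_p)$ for every $p \geq 3$, so that the abelianization of $\GL_2(\F_p)$ is the cyclic group $\F_p^*$ and thus has a unique subgroup of index $2$. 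Consequently, if $[\GL_2(\F_p) : G'] = 2$, then $-I \in G'$, forcing $\langle G', -I\rangle = G' \neq \GL_2(\F_p)$, a contradiction. Hence $G' = \GL_2(\F_p)$, and since $E = (E')^d$, the converse follows by symmetry.

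The subtle point the argument must handle is the case $\Q(\sqrt{d}) \subseteq \Q(E[p])$, in which the characters $\chi_d$ and $\rho_{E,p}$ are not independent. A naive approach that tries to choose $\sigma$ with $\chi_d(\sigma) = 1$ and $\rho_{E,p}(\sigma) = g$ for arbitrary $g$ would fail in that case; the reduction to index-$2$ subgroups containing $-I$ is the device that sidesteps this difficulty uniformly and is the main conceptual obstacle.
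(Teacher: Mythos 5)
Your proof is correct. The overall reduction is the same as the paper's: both arguments use the twist relation $\rho_{E',p}=\chi_d\otimes\rho_{E,p}$ to conclude that if $G_E(p)=\GL_2(\F_p)$ then $G_{E'}(p)$ has index at most $2$, and then rule out the index-$2$ case. Where you differ is in how that case is dispatched. The paper invokes surjectivity of the determinant (via the Weil pairing) together with a citation to Adelmann's tables asserting that no index-$2$ subgroup of $\GL_2(\F_p)$ has surjective determinant except \texttt{2Cn} when $p=2$, and then handles \texttt{2Cn} by a separate argument about the square discriminant being a twist invariant. You instead give a self-contained group-theoretic lemma: since $[\GL_2(\F_p),\GL_2(\F_p)]=\SL_2(\F_p)$ for odd $p$, the abelianization is the cyclic group $\F_p^*$, so the unique index-$2$ subgroup is $\det^{-1}\bigl((\F_p^*)^2\bigr)$, which contains $-I$; this contradicts $\langle G_{E'}(p),-I\rangle=\GL_2(\F_p)$. (The same unique subgroup also fails to have surjective determinant, so your lemma subsumes the paper's citation.) Your treatment of $p=2$ is also cleaner: since $-I=I$ in $\GL_2(\F_2)$, the two mod-$2$ representations have identical image and there is nothing to prove, avoiding the paper's discriminant argument entirely. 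Both approaches are valid; yours buys self-containedness and a uniform, shorter handling of $p=2$, at the cost of having to verify the commutator computation at $p=3$ (which does hold, even though $\SL_2(\F_3)$ itself is not perfect).
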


\begin{proof}
Notice that it is enough to show that if ${\rho}_{E,p}$ is surjective, then ${\rho}_{E',p}$ is surjective. Now, if $E'$ is a quadratic twist of $E$, then there is a square free $D\in\Z$ such that $E$ and $E'$ are isomorphic over $\Q(\sqrt{D})$. Assume that $G_E(p)=\GL_2(\Z/p\Z)$. Then, it must be that either $G_{E'}(p) = \GL_2(\Z/p\Z)$ or $\GL_2(\Z/p\Z) \simeq G_{E'}(p)\rtimes\Z/2\Z$. In the last case, we would have that $G_{E'}(p)$ is a subgroup of index 2 inside of $\GL_2(\Z/p\Z)$. We have that  $\det\rho_{E,p}$ is the cyclotomic character, a standard consequence of the Weil pairing says that $\det:G_{E'}(p)\to (\Z/p\Z)^\times$ is surjective. According to \cite[Figure 5.1]{adelmann} there are no such groups unless $p=2$ and $G_{E'}(p)$ is conjugate to \texttt{2Cn}, but the property that $G_{E'}(p)$ is conjugate to \texttt{2Cn} is equivalent to $E'$ having a square discriminant which is invariant under quadratic twists. This would mean that $G_E(p)$ is not surjective giving a contradiction. 
\end{proof}

\begin{corollary}\label{cor:only_depends_on_j}
Let $E/\Q$ be a non-CM elliptic curve. Then $A(E)$ is invariant under $\Qbar$-isomorphism. In other words, $A(E)$ only depends on $j_E$.
\end{corollary}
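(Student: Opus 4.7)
The plan is to reduce to the case of quadratic twists and then extend the character-theoretic argument of Proposition \ref{prop:surj_twist} (which handles $n=p$) to all prime powers $n=p^k$. Because $E$ is non-CM, $j_E\notin\{0,1728\}$ and hence $\Aut_{\Qbar}(E)=\{\pm 1\}$, so the twists of $E/\Q$ are classified by $H^1(\Gal(\Qbar/\Q),\{\pm 1\})\cong\Q^\times/(\Q^\times)^2$; any $E'/\Q$ with $j_{E'}=j_E$ is therefore a quadratic twist $E^D$ of $E$ by some squarefree $D\in\Z$. Thus it suffices to prove that for every prime $p$ and every $k\ge 1$, $\rho_{E,p^k}$ is surjective if and only if $\rho_{E^D,p^k}$ is surjective; this at once yields $A(E)=A(E^D)$ and the corollary.

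Starting from the identity $\rho_{E^D,n}(\sigma)=\chi_D(\sigma)\,\rho_{E,n}(\sigma)$, with $\chi_D$ the quadratic character of $\Q(\sqrt D)/\Q$ viewed as the scalar $\pm 1\in\GL_2(\Z/n\Z)$, the case $p=2$, $k=1$ collapses to $\rho_{E,2}=\rho_{E^D,2}$ and is automatic. For all other $n=p^k$, we assume $G_E(p^k)=\GL_2(\Z/p^k\Z)$ and distinguish whether $\chi_D|_{\ker\rho_{E,p^k}}$ is trivial. If it is nontrivial, then for any $g\in\GL_2(\Z/p^k\Z)$ one can modify a preimage of $g$ by an element of the kernel with $\chi_D=-1$ to realize $g$ as some $\rho_{E^D,p^k}(\sigma)$, so $G_{E^D}(p^k)=\GL_2(\Z/p^k\Z)$. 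If it is trivial, then $\chi_D$ descends through $\rho_{E,p^k}$ to a character $\tilde\chi:\GL_2(\Z/p^k\Z)\to\{\pm 1\}$, and a direct computation shows
\[
G_{E^D}(p^k)=\{\,\tilde\chi(g)\,g : g\in\GL_2(\Z/p^k\Z)\,\},
\]
a set that equals $\GL_2(\Z/p^k\Z)$ unless $\tilde\chi$ is nontrivial with $\tilde\chi(-I)=-1$, in which case it is exactly the index-$2$ subgroup $\ker\tilde\chi$.

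The main step is to rule out this last possibility by showing that every character $\tilde\chi:\GL_2(\Z/p^k\Z)\to\{\pm 1\}$ sends $-I$ to $1$. I would handle all primes uniformly by means of the explicit commutator identity
\[
-I\;=\;\left[\begin{pmatrix}0&1\\1&0\end{pmatrix},\begin{pmatrix}1&0\\0&-1\end{pmatrix}\right],
\]
valid over any commutative ring with unit, which places $-I$ in $[\GL_2(\Z/p^k\Z),\GL_2(\Z/p^k\Z)]$ and hence makes it invisible to any homomorphism into an abelian group. The main obstacle is the case analysis of the previous paragraph, in which the descent of $\chi_D$ through $\rho_{E,p^k}$ must be set up carefully; once it is, the commutator identity disposes of every prime simultaneously and, in particular, replaces the somewhat delicate $p=2$ subgroup classification invoked in Proposition \ref{prop:surj_twist}.
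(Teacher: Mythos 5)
Your proof is correct, and it takes a genuinely different route from the paper's. The paper handles the mod-$p$ level (Proposition \ref{prop:surj_twist}) by invoking Adelmann's classification of index-$2$ subgroups of $\GL_2(\F_p)$ with full determinant and special-casing \texttt{2Cn} via the square-discriminant criterion, and then disposes of higher prime powers by observing that the five maximal groups of Table \ref{tab:adic_groups} --- the only way $\rho_{E,p^k}$ can fail to surject for $k\ge 2$ while $\rho_{E,p}$ surjects, necessarily with $p\in\{2,3\}$ --- all contain $-I$, so containment in them is preserved under quadratic twist. You instead argue uniformly for every prime power: starting from $\rho_{E^D,p^k}=\chi_D\cdot\rho_{E,p^k}$, the twisted image is either all of $\GL_2(\Z/p^k\Z)$ or the kernel of a character $\tilde\chi$ with $\tilde\chi(-I)=-1$, and the identity $-I=(AB)^2=[A,B]$ with $A=\smallmat{0}{1}{1}{0}$, $B=\smallmat{1}{0}{0}{-1}$ (valid over $\Z/p^k\Z$ for all $p$ and $k$, both matrices being invertible) rules out the latter. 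I checked the commutator computation and your two-case analysis, including the claim that $g\mapsto\tilde\chi(g)g$ is surjective exactly when $\tilde\chi(-I)=1$; all of it is sound, and the reduction to quadratic twists is justified by the non-CM hypothesis forcing $j_E\notin\{0,1728\}$. What your approach buys is self-containment and generality: it needs neither the external subgroup classification nor the table of maximal $p$-adic images, and it proves the stronger statement that surjectivity of $\rho_{E,p^k}$ is a quadratic-twist invariant for \emph{every} prime power, from which $A(E)=A(E^D)$ is immediate. What the paper's shorter argument buys is economy, since it leans on classification data it already has assembled for other purposes.
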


\begin{proof}All that is left is to prove is that if $p$ is adically-exceptional for $E$, then it is adically-exceptional with the same exponent for all of the quadratic twists of $E$. This follows from the exact same argument as above together with the fact that the $5$ maximal groups in Table \ref{tab:adic_groups} that surject onto $\GL_2(\Z/p\Z)$ but are not all $\GL_2(\Z/p^k\Z)$ for some $k\geq 2$, all contain $-I$ and thus can not be quadratic twisted into. That is, because they contain $-I$, if $G^\infty_E(p)$ is not in one of these groups and $E'$ is a quadratic twist of $E$, then $G^\infty_{E'}(p)$ is not in one of these groups either. 
\end{proof}

Corollary \ref{cor:only_depends_on_j} allow us to take representatives of each of the finitely many $\Qbar$-isomorphism classes of elliptic curves and compute their Serre's constant.

\section{Outline of the computations: Fiber products of pairs of modular curves}\label{outline}

We are now ready to fully leverage the connection between types and subgroups of $\GL_2(\Z/n\Z)$ and their corresponding modular curves. To start we survey some of the results about modular curves that will form the foundation for our computations. For $n=p\le 11$ prime, Zywina  \cite{zywina1} classifies all of the possible subgroups $G_E(p)\subseteq \GL_2(\Z/p\Z)$ for elliptic curves $E/\Q$. The case $p=13$ is the first prime for which Zywina does not have a complete description. He classifies all the possible subgroups $G_E(13)$ except the cases concerning subgroups of $\texttt{13Nn}$, $\texttt{13Ns}$ and $\texttt{13S4}$. Baran \cite{baran14} showed that the modular curves $X_\texttt{13Nn}$ and  $X_\texttt{13Ns}$ are both isomorphic to a genus 3 curve, and recently Balakrishnan et al. \cite{curse} have determined that this genus 3 curve has no non-singular, non-CM rational points. Therefore there are no non-CM elliptic curves $E/\Q$ such that $G_E(13)$ is a subgroup of $\texttt{13Nn}$ or $\texttt{13Ns}$. The remaining case is the curve $X_\texttt{13S4}$. Banwait and Cremona  \cite{BC14} have shown that this curve has genus 3 and at least three non-singular, non-CM points corresponding to the three $j$-invariants that appear in Conjecture \ref{13S4}. {Recently, Balakrishnan et al. have announced that using similar techniques to those in \cite{curse}, the curve $X_\texttt{13S4}$ has only the three non-singular, non-CM rational points found by Banwait and Cremona.} Finally, thanks to Theorem \ref{T:Mazur-Serre-BPR} we have that if $p\ge 17$ and $\rho_{E,p}$ is non-surjective then $G_E(p)$ appears in Table \ref{jmaps} or $G_E(p)$ is a subgroup of $\texttt{pNn}$. In the later case, Baran \cite{baran10} has showed that the genus of $X_\texttt{pNn}\ge 2$. 
On the other hand, by Remark \ref{remadic} if $p\ge 5$ we have $G_E^\infty(p) = \GL_2(\Z_p)$ if and only if $G_E(p) = \GL_2(\Z/p\Z)$ while when $p = 2$ or $3$, $G_E^\infty(p) = \GL_2(\Z_p)$ if and only if $G_E(p^k)$ is not conjugate to a subgroup of one of the groups listed in Table \ref{tab:adic_groups}.

The following table summarizes the genus of the modular curves of the form $X_G$ and whether or not the modular curve has infinitely many points: 

\begin{center}
\begin{tabular}{|c|c|c|}
\cline{2-3}
\multicolumn{1}{c|}{$\quad$} & Genus $X_G$ & $G=G^\infty_E(p)$\\
 \hline
\multirow{2}{*}{$\#X_G(\Q)<\infty$}  & > 1 & \texttt{13S4},\,  \texttt{37B.8.1},\,  \texttt{37B.8.2};\,\, \texttt{pNn}, $\,\,\,p\ge 17$ \\
\cline{2-3}
 & 1 &\texttt{7Ns.3.1}, \, \texttt{11B.10.4},\,  \texttt{11B.10.5},\,  \texttt{17B.4.2},\,  \texttt{17B.4.6}  \\
\hline
\multirow{2}{*}{$\#X_G(\Q)=\infty$}  & $1$ & \texttt{11Nn} \\
\cline{2-3}
 & $0$ & otherwise \\
\hline
\end{tabular}
\end{center}
The main goal of our project is to characterizes the exceptional and adically-exceptional types of non-CM elliptic curves defined over $\Q$. 

Let $E$ be a non-CM elliptic curve defined over $\Q$. Let $S\subseteq S_E$ be a subset of the set of exceptional primes for $E$ and $[\pm G_E(p)\,:\,p\in S]$ be the exceptional type associated to $E$ and $S$ where we add $-I$ to each component if it is not already there. Let $G\subseteq \GL_2(\Z/n\Z)$ be the group corresponding to the type $[G_E(p)\,:\,p\in S]$, where $n$ is the product of $p\in S$ (i.e. the level of $[G_E(p)\,:\,p\in S]$). The group $G$ satisfies $-I\in G$ since we added $-I$ to each $G_E(p)$ and  $\det(G)=(\Z/n\Z)^\times$ and as discussed before we can associated a modular curve $X_G$. Using the correspondence established in Section \ref{sec:maintheorems}, there exist a non-CM elliptic curve with mod $p$ image in $\pm G_E(p)$ for all $p\in S$ exactly when there exists a non-cuspidal, non-CM rational point in $X_G(\Q)$. The case of adically-exceptional types is equivalent. We are primarily interested in those groups $G$ coming from (adically-)exceptional types such that $\#X_G(\Q)=\infty$. That is, when $X_G$ is a genus 0 curve with a rational point, i.e. $X_G(\Q)\simeq \mathbb P^1(\Q)$, or $X_G$ is an elliptic curve defined over $\Q$ with positive rank over $\Q$. Although this is our primary interest, in this paper we build the basis to finalize the characterization of the (adic-)types that can occur for non-CM elliptic curves over $\Q$, as well as the combinations of primes that can be (adically-)exceptional for a given non-CM elliptic curve $E/\Q$. This project will have a second component \cite{DGJ2}.

Let $G$ be a group in Table \ref{jmaps} such that $\#X_G(\Q)<\infty$. Thanks to Corollary \ref{cor:only_depends_on_j} we know that Serre's constant is an invariant of the $\Qbar$-isomorphism class. For each non-CM $j$-invariant $j_0$, it is enough to take one elliptic curve $E/\Q$ with $j_E=j_0$ and then compute the set $S_E$ of exceptional primes. Zywina \cite{zywina2} gave an algorithm to compute $S_E$ and combined  with his classification in Table \ref{jmaps} allow us to determine the exceptional type of the $\Qbar$-isomorphic class of $E$. We can compute the set of adically-exceptional primes $S_E^\infty$ in an analagous manner. Then we obtain that the possible (adically-)exceptional types are the ones that appear in Table \ref{tab:jconstant}. Assuming Conjecture \ref{13S4} and Strong Uniformity gives that Table \ref{tab:jconstant} is complete.

Let $p$ be prime, after the above sieve, we have $29$ groups $G$ of the form $G_E(p)$ (see Table \ref{jmaps}) and $5$ of the form $G_E(p^k)$ for $k\ge 2$ (Table \ref{tab:adic_groups}) such that $-I\in G$ and $\#X_G(\Q)=\infty$. Moreover, all those curves have genus 0, except for $G=\texttt{11Nn}$, that is an  elliptic curve with positive rank. Of these 34 images that occur for infinitely many $\Qbar$-isomorphism classes, there levels are broken down in the following table. 
\

\begin{center}
\begin{tabular}{|c||c|c|c|c|c|c|c|c|c|}\hline
Level & 2 & 3 & 4 & 5 & 7 & 8 & 9 & 11 & 13\\\hline
\# of groups & 3 & 4 & 2 & 9 & 6 & 2 & 1 & 1 & 6\\\hline
\end{tabular}
\end{center}
\subsection{Pairs of non-surjective Galois representations.}
For the remainder of this section any group $G$ is one of the above $34$ groups. In the next sections we treat separately the cases of exceptional primes and adically-exceptional primes. Note that we are trying to characterize the complete set of possible combination of mod $p$ Galois representations, but in the $p$-adic case we are only interested up to conjugation of a subgroup of a maximal group in $\GL_2(\Z_p)$.

\subsubsection{Exceptional pairs}\label{sub_extype} Let $E/\Q$ be a non-CM elliptic curve with two distinct exceptional primes $p<q$ and let $G_p=G_E(p)$ and $G_q=G_E(q)$. The next sieve comes from the classification of rational $n$-cyclic isogenies given by Mazur and Kenku (cf. \cite{Mazur1978,kenku2, kenku3, kenku4, kenku5}) and torsion structure given by Mazur (cf. \cite{mazur1}). For the special case of  a non-CM elliptic curve $E/\Q$ we have:
\begin{itemize}
\item if $E$ has a cyclic $n$-isogeny, then $n\in\{1,\dots,13,15,16,17,18,21,25,37\}$, and 
\item $E(\Q)_{\tors}\in\left\{\Z/m\Z\,:\, m=1,\dots,10,12\right\}\cup \left\{\Z/2\Z\oplus \Z/2m\Z \,:\, m=1,\dots,4\right\}$.
\end{itemize}
Using the above classifications we can immediately eliminate some pairs $G_1,G_2$. For example, any combinations of groups $G_5\subseteq \texttt{5B}$ and $G_7\subseteq \texttt{7B}$ can be eliminated, since such a elliptic curve would have a 35-isogeny. After using these results to eliminate combinations of groups that would violate these classifications, we are left with 206 possible pairs to consider. For each of these possibilities, we compute the corresponding group $G\subseteq \GL_2(\Z/pq\Z)$, and we compute the genus of the corresponding modular curve $X_G$ group theoretically (see \cite[Lemma 2.4]{SZ}). Note that $X_G=X_{G_p}\times_{\mathbb P^1}X_{G_q}$, the fiber product:
$$
    \xymatrix{   &\ar[dl] X_G \ar[dr] &  \\
              X_{G_p}  \ar[dr]_{j_{G_p}} &  & X_{G_q}  \ar[dl]^{j_{G_q}}\\
               & \mathbb P^1 & }
$$
Therefore,
$$
X_G(\Q)=\{(R_p,R_q)\in X_{G_p}(\Q)\times X_{G_q}(\Q)\,:\, j_{G_p}(R_p)=j_{G_q}(R_q) \}.
$$
The simplest case is that the modular curves associated to the groups $G_p$ and $G_q$ both have genus 0. That is $G_p,G_q\ne \texttt{11Nn}$. In this case, both curves $X_{G_p}$ and $X_{G_q}$ are isomorphic to $\PP_1$ and an equation of the fiber product $X_G$ is just the numerator of $j_{G_p}(x) - j_{G_q}(y) = 0$. This model for this curve is usually singular, but give an initial model to work with in order to classify all the rational points. 

The next case we considered is when, say $G_{q}=\texttt{11Nn}$. In \cite[\S 4.5.5]{zywina1}, Zywina gives polynomials $A(x),B(x),C(x)$ in $\Z[x]$ such that an elliptic curve $E$ has $G_E(11)$ conjugate to a subgroup of \texttt{11Nn} exactly when the polynomial $j_E^2A(x) + j_EB(x) + C(x)$ has a rational root, where $j_E$ denotes the $j$-invariant of $E$. Thus, in order to construct a plane curve model for the modular curve that parameterizes elliptic curves over whose image mod $p$ image is conjugate to a subgroup of $G_p$ and whose mod $11$ image is conjugate to a subgroup of \texttt{11Nn} can consider the curve given by the equation $j_{G_p}(y)^2A(x) + j_{G_p}(y)B(x) + C(x)=0$. 

\

The largest genus that occurs in this computation is genus 246, and of the 206 curves 141 of these curves have genus less than 20. The counts of genus curves are listed in the table below.
\

\begin{center}
\begin{tabular}{|c||c|c|c|c|c|c|c|c|c|c|c|c|c|c|c|c|c|c|c|}\hline
$g$ & 0 & 1 & 2 & 3 & 4 & 5 & 6 & 7 & 8 & 9 & 10 & 11 & 13 & 14 & 15 & 16 & 18 & 19 & $\ge$ 20\\\hline
\# Curves of genus $g$ & 12 & 25 & 14 & 15 & 12 & 8 & 5 & 9 & 7 & 7 & 2 & 2 & 4 & 5 & 4 & 3 & 1 & 6 & 65\\\hline
\end{tabular}
\end{center}
\

Here we point out that the genus of the modular curve $X_G$ can computed without ever computing a model for the curve. This is becuase the genus of $X_G$ can be computed by counting elliptic points and cusps on $X_G$ (see \cite[Theorem 3.1.1]{DS} for example) which can be done using just $G$. The code for the computation of the genus of these curves was taken from \cite{SZ}. 

\

\noindent We obtain the following data depending on the genus of the fiber product:\\

{\noindent $\bullet$ Genus 0:}  For each of the genus 0 curves we are able to compute the $j$-maps and thus completely classify all of the elliptic curves with those exceptional types. This data can be found in Table \ref{jmapsj0coarse} and the corresponding parametrization to obtain the fine moduli in Table \ref{jmapsj0fine}. \\
{\noindent $\bullet$ Genus 1:}  For the 25 genus 1 curves (see the first two columns of Table \ref{tab:gen1nonCM}) there are 24 elliptic curves and 1 curve that does not have a single rational point. Of the 24 elliptic curves, 20 have rank 0 and 4 have positive rank. For the curves with rank 0, we compute all of the rational points and their corresponding $j$-invariants to see that only 6 curves have points that correspond to non-CM elliptic curves. These curves, their points and representatives of the corresponding non-CM $\Qbar$-isomorphism classes can be found in Table \ref{tab:gen1r0} and the finitely many elliptic curve that correspond to the fine moduli in Table \ref{tab:gen1r0fine}. Lastly, the modular curves that are genus 1 and have positive rank we give the $j$-maps and the Cremona label for the corresponding model in Table \ref{tab:gen1PosRank} (except for the case \texttt{[3Nn,5S4]}, see Section \ref{proofs}). \\
{\noindent $\bullet$ Genus 2:}  For each of the genus 2 curves the rank of their jacobian have rank 0 or 1, then we can apply Chabauty to obtain all the rational points. We have obtained that all the rational points, if there are, correspond to CM $j$-invariants. This data can be found in Table \ref{tab:g2}.\\
 {\noindent $\bullet$ Genus > 2:} There are 155 curve of genus > 2. For the corresponding groups, there are 28 maximal groups. Now, thanks to the non-constant morphism $X_G\to X_{G'}$ when $G\subseteq G'$ , we have that it is enough to compute the rational points of the curves corresponding to these 28 curves. First we checked for CM-points and then we have looked for points of bounded height. For the cases that $G_i\ne \texttt{11Nn}$, $i=1,2$ with bound $10^6$, otherwise with bound $100$ or until the computer used more than $50$GB of memory. In all those maximal curves we have not found any non-CM rational point. For each type we add a subscript indicating the genus of the corresponding modular curve, a superscript of \text{cm} when the only points we have found correspond to elliptic curves with CM or a superscript of $\emptyset$ in the case that we have not found any rational point. This data can be found in Table \ref{rema}.

\subsubsection{Adically-exceptional pairs} In the case of $p$-adic Galois representations, our objective in this paper is only to characterize the possible combinations of maximal $p$-adic images. That is up to conjugation of a subgroup of a maximal group in $\GL_2(\Z_p)$.

Let $E/\Q$ be an elliptic curve with at least one adically-exceptional prime $p$ such that $p$ is not exceptional. That is $G_E^\infty(p) \ne \GL_2(\Z_p)$ and $G_E(p) = \GL_2(\Z/p\Z)$. Therefore $G_1=G_E^\infty(p)$ is conjugate to a subgroup of one of the groups listed in Table \ref{tab:adic_groups}. 
Now, let $q\ne p$ a prime and $G_2=G^\infty_E(q)$ one of the groups in Table \ref{jmaps} such that the corresponding modular curve has infinitely many points or in Table \ref{tab:adic_groups} such that it is maximal.
All those groups in Table  \ref{tab:adic_groups} are maximal, meanwhile the maximal groups in Table \ref{jmaps} are \texttt{2B}, \texttt{2Cn}, \texttt{3B}, \texttt{3Nn}, \texttt{5B}, \texttt{5Nn}, \texttt{5S4}, \texttt{7B}, \texttt{7Nn}, \texttt{7Ns}, \texttt{11Nn} and \texttt{13B}
\footnote{ One might expect to see \texttt{3Ns} and \texttt{5Ns} on this list of groups, but due to the unique characteristics of 3 and 5 these groups are in fact not maximal. One can check that in these cases, $\texttt{3Ns}\subsetneq \texttt{3Nn}$ and $\texttt{5Ns}\subsetneq \texttt{5S4}$.  }. 
Let $G_1=G^\infty_E(p)$ and $G_2=G^\infty_E(q)$. Similarly to the exceptional case at section \ref{sub_extype}, for each of these possibilities, we compute the corresponding group $G\subseteq \GL_2(\Z/p^k q\Z)$, the genus and a model of the corresponding modular curve $X_G$.  
In this case we obtain 54 curves, and the largest genus is 111. 
Note that for these computations we have not made the cases when $G_1$ and $G_2$ belong to Table \ref{jmaps}, since those computations have been done in the previous section. The counts of genus curves are listed in the table below.
\

\begin{center}
\begin{tabular}{|c||c|c|c|c|c|c|c|c|c|c|c|c|c|c|c|c|c|c|c|}\hline
$g$ & 0 & 1 & 2 & 3 & 4 & 6 & 7 & 8 & 10 & 13 & 14 & 18 & 26 & 40 & 54& 111 \\ 
\hline
\# Curves of genus $g$ & 10 & 15 & 6 & 7 & 2 & 2 & 3 & 1 & 1 & 1 & 1 & 1& 1& 1& 1& 1 \\\hline
\end{tabular}
\end{center}
\

\noindent
We obtain the following data depending on the genus of the fiber product:\\

{\noindent $\bullet$ Genus 0:} Similar to the Section \ref{sub_extype} although in this case we do not consider the fine moduli spaces. The data can be found in Table \ref{jmapsj0coarse}. \\
{\noindent $\bullet$ Genus 1:}  There are 15 genus 1 curves (see the first two columns of Table \ref{tab:gen1nonCM}), only 2 of them without rational points. For the remaining elliptic curves there are 8 with rank 0  and 5 with positive rank, and the corresponding data appear in Table  \ref{tab:gen1r0} and  Table \ref{tab:gen1PosRank} respectively.\\
{\noindent $\bullet$ Genus 2:}  There are 6 genus 2 curves. One of then has not any rational point, 4 have jacobian with rank 0 or 1 and then we obtain all their rational points applying Chabauty. The remaining curve is the modular curve associated to the pair  [\texttt{4X7},\texttt{5Nn}] whose jacobian has rank 2. Then in order to obtain its rational points we apply $2$-cover descent and elliptic Chabauty. This computation has been realized with the help of Xavier Xarles. Let $C$ be the following hyperelliptic model associated to the modular curve corresponding to the pair [\texttt{4X7} \texttt{5Nn}]:
$$
C\,:\,y^2=f(x)\,,\qquad f(x)=(x-2) \left(x^2+1\right) \left(4 x^3-4 x^2+3 x-2\right).
$$
First we do a point search to find all easily visible points and we compute that in projective coordinates
$$
S_1:=\{[0 : \pm 2:1], [3/4;\pm 50/4:1], [2 : 0 : 1],[1:\pm 2:0]\}\subseteq C(\Q).
$$
Our purpose is to prove $C(\Q)=S_1$. Let us determine an unramified two covering $\psi :D\to C$ defined over $\Q$ such that the associated covering collection $\psi_\delta:D_\delta\to C$ satisfies that any rational point $P\in C(\Q)$ on the curve lifts to one of the covers $D_\delta(\Q)$. The idea is to factorize the polynomial
$f(x)$ as the product of two polynomials (over some number field $K$) of even degree. Let be $f(x)=f_1(x)f_2(x)$, where $f_1(x),f_2(x)\in K[x]$ for some number field $K$. We get the subcovers $E_1:y_1^2=f_1(x)$ and $E_2:y_2^2=f_2(x)$ and the unramified two covering $\psi :D\to C$,  $\psi(x_0,y_1,y_2)=(x_0,y_1y_2)$ where $D\,:\,\{y_1^2=f_1(x)\,,\,y_2^2=f_2(x)\}$:
$$
\begin{array}{ccc}
\xymatrix@R=2pc@C=0.8pc{
& & D \ar@{->}[d]^{\psi}    \ar@{->}[drr]  \ar@{->}[dll]   &   &      \\
E_1 \ar@{->}[drr]_{x} & & C     \ar@{->}[d]_{x}												 & &E_2 \ar@{->}[dll]^{x}\\
      				 & &	 \mathbb{P}^1  & & 			
}
& 
\xymatrix@R=2pc@C=0.8pc{
 & & & &\\
  \ar@{~>}[rrrr]^\delta & & & &\\
 & & & &
}
&
\xymatrix@R=2pc@C=0.8pc{
& & D_\delta \ar@{->}[d]^{\psi_\delta}    \ar@{->}[drr]  \ar@{->}[dll]   &   &      \\
E_1^\delta \ar@{->}[drr]_{x} & & C     \ar@{->}[d]_{x}												 & &E_2^\delta \ar@{->}[dll]^{x}\\
      				 & &	 \mathbb{P}^1  & & 			
}
\end{array}
$$
Now if we can determine $D_\delta(\Q)$ we determine $C(\Q)$, since $D_\delta(\Q)$ maps to $\{P\in E_i^\delta(K):x(P)\in \mathbb P^1(\Q)\}$. Finally, in order to determine which of those points correspond to points in $C(\Q)$ we only have to determine wich points in $x(E_i^\delta(K))\cap \mathbb P^1(\Q)$ lift to $C(\Q)$, for any of the two subcovers. \\
\indent Let be $K=\Q(\alpha)$ where $\alpha$ is a root of the polynomial $g(x)=4 x^3-4 x^2+3 x-2$ then we choose the following factorization of the polynomial $f(x)$:
$$
f_1(x)=(x - 2)(x-\alpha),\qquad\qquad 
f_2(x)=(x^2 + 1)(4x^2 + 4(\alpha - 1)x + (4\alpha ^2 - 4\alpha + 3).
$$
We could have chosen other factorizations over other number fields, but this one works for our purpose. Now, in order to compute the (finite) set $\mathcal T$ of twists necessary to cover all the rational points we compute the Fake 2-Selmer group of $C/\Q$ (see \cite{BS}). This can be done by the \texttt{Magma} function \texttt{TwoCoverDescent}. We check that all the possible twists come from the points in the set $S_1$: let $\delta\in\mathcal T$, then $\delta=f_2(x_0)\in K^*/K^{*2}$ for $x_0$ the $x$-coordinate of an affine point in $S_1$; and $\delta=1$. Note that $f_2(3/4)\in K^2$. Therefore we have obtained $\mathcal T=\{1\}\cup\{f_2(x_0):x_0\in\{0,2\}\}$. For any $\delta\in\mathcal T$ we have that $\text{rank}_\Z(E_2^\delta(K))< 3$,  then we can apply Elliptic Chabauty to the covering $x:E_2^\delta\to\mathbb P^1$, to obtain the set $C(\Q)$. The following tables illustrates the data obtained for each $\delta$ the corresponding point in $\mathbb P^1(\Q)$:
$$
\begin{array}{|c|c|c|c|}
\hline
\delta & 1 & \alpha^2 - \alpha + 3/4 & 5\alpha^2 + 5\alpha + 55/4 \\
\hline
x(P)\in \mathbb P^1(\Q), P\in E_2^\delta(K) & [3/4:1], [1:0] & [0:1] & [2:1] \\
\hline 
\end{array}
$$
Therefore we finish with $C(\Q)=S_1$.
\\
{\noindent $\bullet$ Genus > 2:}   Similar to the case in section \ref{sub_extype}: the data can be found in Table \ref{rema}. But in this case we have found some non-CM rational points:
\begin{itemize}
\item $j=3^3 5\, 7^5/2^7$ in the modular curve of genus 3 asociated to \texttt{[8X5,7Ns]}. But this $j$-invariant corresponds to \texttt{[8X5,7Ns.3.1]} (see Table \ref{tab:jconstant}).
\item  $j=-2^2 3^7 5^3 439^3$ in the modular genus 6  curve associated to the pair  \texttt{[4X7,9XE]}. 
\end{itemize}

\section{ Proof of the Theorem \ref{the_comb_modp}, Theorem \ref{the_comb_adic}, Corollary \ref{the_comb_modp_COR} , and  Corollary \ref{the_comb_adic_COR}}\label{proofs}

We have discussed previously that our computations allow us to compute the genus of the modular curves obtained as the fiber product of modular curves arising from pairs of non-surjective (mod $p$ or $p$-adic) Galois representations. In particular, such a curves can have infinitely many non-CM points and non-cusps if and only if it has genus 0 with some rational point, or it is an elliptic curve with positive rank.  Apart from the case with a single group (see Tables \ref{jmaps}, \ref{table_mod_single},  \ref{tab:adic_groups}), we have obtained the following:\\
{$\bullet $ Genus 0:} For each curve of genus zero we check that it has a rational point. See Tables \ref{jmapsj0coarse} and  \ref{jmapsj0fine} for all those possible pairs.\\
{$\bullet $ Genus 1:} There are $40$ modular curves of genus 1 associated to pairs of mod $p$ and $p$-adic non-surjective representations. From them, the $9$ elliptic curves of positive rank appear in Table \ref{tab:gen1nonCM}. Note that the cases \texttt{[8X5,3Nn]} and \texttt{[3Nn,5S4]} do not appear at Theorems \ref{the_comb_modp}  and  \ref{the_comb_adic}. In the following paragraphs we describe the reasons that we can discard those cases:
\begin{itemize}
\item[$\star$] \texttt{[8X5,3Nn]}: Let $\mathcal E$ be the modular curve associated to \texttt{[8X5,3Nn]}. In this case, $\mathcal E/\Q$ is the elliptic curve with Cremona label \texttt{576a3} and has $j$-map equal to $j(x)=8x^3$ where $(x,y)\in \mathcal E(\Q)$. Now let \texttt{8X17} be the group
$$
\left \langle \smallmat{1}{0}{2}{1},\smallmat{1}{0}{0}{7},\smallmat{1}{1}{0}{5}\right\rangle \subsetneq \texttt{8X5} \subsetneq \GL_2(\Z/8\Z).
$$
The modular curve associated to the (nonmaximal) exceptional pair [\texttt{8X17},\texttt{3Nn}] is the elliptic curve $\mathcal E'/\Q$ with Cremona label \texttt{576a1}. The $j$-map $\mathcal E'$ is equal to $j'(x)=8\left(\frac{x^3+32}{x^2}\right)^3$ where $(x,y)\in \mathcal E'(\Q)$. As the Cremona labels indicate, there is an isogeny $\varphi:\mathcal E'\rightarrow \mathcal E$ defined by 
$$
\varphi(x,y)=\left(\frac{x^3+32}{x^2},\frac{x^3y-64y}{x^3}\right),
$$
of degree 3. 
A simple calculation shows that $j'(x,y)=j(\varphi(x,y))$ and thus ${\rm Im}(j)={\rm Im}(j')$. Because of the relationship between these two groups and the $j$-maps of these modular curves, any elliptic curve that have potentially $G^\infty_E(2)$ conjugate to a subgroup of \texttt{8X5} and $G^\infty_E(3)$ conjugate to a subgroup of \texttt{3Nn} must have (smaller) images: $G^\infty_E(2)$ conjugate to \texttt{8X17}. Further, \texttt{8X17} is a subgroup of \texttt{2B} and so the mod 2 Galois representation was already non-surjective. Thus, there are no elliptic curves of { exact} type \texttt{[8X5,3Nn]} despite the fact that $\mathcal{E}(\Q)$ contains infinitely many points. 
\item[$\star$] \texttt{[3Nn,5S4]}: Similarly the { exact} type \texttt{[3Nn,5S4]} does not actually occur for any elliptic curves over $\Q$ despite the fact that the modular curve corresponding to this type has infinitely many rational points. This is again because the modular curves for \texttt{[3Nn,5S4]} and \texttt{[3Nn,5Ns]} are isogenous elliptic curves and their $j$-maps are related in the same way as the curves above. The group \texttt{5Ns} is a proper subgroup of \texttt{5S4} and thus every elliptic curve coming from a rational point on the modular curve for \texttt{[3Nn,5S4]} also comes from a point on the modular curve for the smaller type \texttt{[3Nn,5Ns]}. In this case the modular curve associated to \texttt{[3Nn,5S4]} has Cremona label \texttt{225a2} and for \texttt{[3Nn,5Ns]} is \texttt{225a1}. These elliptic curves both have rank 1 and are 3-isogenous to each other. 
\end{itemize}
To summarize the above two cases, any elliptic curve of type \texttt{[8X5,3Nn]} (resp.~\texttt{[3Nn,5S4]}) must also be of smaller type \texttt{[8X17,3Nn]} (resp.~\texttt{[3Nn,5Ns]}). So there are no elliptic curves of exact adic type \texttt{[8X5,3Nn]} (resp. exact type ~\texttt{[3Nn,5S4]}) as all the curves of these types much have strictly smaller exact (adic-)type. 

{ We also point out that we know that there are infinitely many elliptic curves whose image is conjugate to the groups listed in Table \ref{table_mod_single} since \cite{zywina1} give explicit 1-parameter families of elliptic curves with images exactly equal to each group outside of a thin set.}

We are now ready to prove Corollary \ref{the_comb_modp_COR}

\begin{proof}[Proof of Corollary \ref{the_comb_modp_COR}]
The proof of this corollary breaks down into two cases. Given a group $G\subseteq \GL_2(\Z/n\Z)$ with $n$ squarefree satisfying $-I\in G$ and $\det(G)=(\Z/n\Z)^\times${, from Faltings' Theorem \cite{Faltings}} there can only be infinitely many $\Qbar$-isomorphism classes of elliptic curves over $\Q$ of type $G$ if the corresponding modular curve $X_G$ is genus 0 with a rational point or if it is an elliptic curve with positive rank over $\Q$. 

For the first case, assume $X_G\simeq \mathbb P^1$ over $\Q$. Under the assumption of uniformity we may assume that $p$ divides $n$ if and only if there exists some elliptic curve over $\Q$ for which $p$ is exceptional. This is because if $p$ did not already divide $n$, we can lift the group $G$ to be a subgroup of $\GL_2(\Z/pn\Z)$ by taking its preimage under the standard reduction map. This processes does not affect the modular curve $X_G$ or its moduli interpertation.

By \cite[Lemma 3.5]{zywina1} we have that there are infinitely many $\Qbar$-isomorphism classes of elliptic curves with $\pm G_E(n)$ conjugate to $G$. For each of these $\Qbar$-isomorphism classes, the curves in them cannot have any additional exceptional primes by the assumption that any prime that could potentially be exceptional (under the assumption of uniformity) is already accounted for since it divides $n$. { Thus, for each of the infinitely many $\Qbar$-isomorphism classes of elliptic curves guaranteed to exist by \cite[Lemma 3.5]{zywina1}, Remark \ref{rmk:twisting_and_-I} ensures that there is at least one $\Q$-isomorphism class with image exactly $G$.}

%
%

Finally, assume that the corresponding modular curve $X_G$ has genus 1 and positive rank. For every $p$ dividing $n$ let $G_p = \pi_p(G)$ where $\pi_p\colon \GL_2(\Z/n\Z) \to \GL_2(\Z/p\Z)$ is the standard componentwise reduction map. Then every point in $X_G(\Q$) corresponds to a $\Qbar$-isomorphism class of elliptic curves with type $[G_p : \hbox{$p$ dividing $n$}]$. 

Suppose that $E$ has type $[G_p : \hbox{$p$ dividing $n$}]$ but its exact type is not $[G_p : \hbox{$p$ dividing $n$}]$. Then at least one of the following must be true: 
\begin{itemize}
\item The level of the type of $E$ is larger than $n$.
\item There is a prime $p$ that divides $n$ such that $G_E(p)$ is conjugate to a proper subgroup of $G_p$. 
\end{itemize}

{ From the assumption of uniformity we know that there are only finitely many primes $p$ such that $p\nmid n$ and there are elliptic curves defined over $\Q$ that are exceptional at $p$. Of these finitely many primes $p$, there are only finitely many ways that an elliptic curve $E/\Q$ can be of the type associated to $G$ and exceptional at $p$. The modular curves corresponding to each of these possibilities are shown to all have have genus greater than 1 or to be genus 1 with rank 0 in Section \ref{outline}. Therefore, there can only be finitely many $\Qbar$-isomorphism classes of elliptic curve of type $G$, but exact type of level larger than that of $G$.}

Next, checking every possible subtype shows that the are only 2 types whose corresponding modular curves are of genus 1 and have positive rank over $\Q$ that also have a subtype with a genus 1 positive rank corresponding modular curve. Those types are \texttt{[8X5,3Nn]} and \texttt{[3Nn,5S4]} and we dealt with these at the beginning of Section \ref{proofs}.

%
\end{proof}

{Since we do not require that the types be exact in Corollary \ref{the_comb_adic_COR}, the result follows from the genus and rank computation that we have described in this section as well as an argument similar to that of the proof of Corollary \ref{the_comb_modp_COR}.}

\

Now for a given (adic-)type to occur for infinitely many $\Qbar$-isomorphism classes of elliptic curves it must be that the corresponding moduli space is genus 0 with a rational point or an elliptic curve with positive rank over $\Q$. { This is sufficient to conclude that there are infinitely many $\Qbar$-isomorphism classes of elliptic curves of \emph{exact} (adic)-type thanks once again to \cite[Lemma 3.5]{zywina1}.} Calculating all the possibilities gives the lists that appear in part (A) of Theorems \ref{the_comb_modp}  and  \ref{the_comb_adic} and the proof of Corollary \ref{the_comb_modp_COR} shows that the list in Theorem \ref{the_comb_modp} part (A) is in fact complete. {Further, a similar analysis of the computations completes the proof of Theorem  \ref{the_comb_adic}.}

In fact, with a further computation, we can prove the following lemma as well.

\begin{lemma}\label{3primes}
There are no (adically-)exceptional types of length 3 such that there exist infinitely many non-isomorphic classes of non-CM elliptic curves over $\Q$.
\end{lemma}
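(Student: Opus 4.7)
The plan is to leverage the length-2 classification already carried out in Theorems \ref{the_comb_modp} and \ref{the_comb_adic}, together with a fiber product computation for triples, in the same spirit as Section \ref{outline}. Concretely, suppose $[G_{p_1}, G_{p_2}, G_{p_3}]$ is an (adically-)exceptional type of length 3 such that infinitely many non-isomorphic non-CM elliptic curves over $\Q$ have this type. Then each of the three length-2 subtypes $[G_{p_i}, G_{p_j}]$ with $i<j$ must also occur for infinitely many non-isomorphic $\Qbar$-classes, since the $\Qbar$-isomorphism classes of curves of type $[G_{p_1}, G_{p_2}, G_{p_3}]$ project to $\Qbar$-isomorphism classes of each of these subtypes, and this projection has finite fibers (the fiber product of the associated modular curves maps to each pairwise fiber product by a finite morphism). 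Hence each pairwise subtype must appear in the lists (A) of Theorems \ref{the_comb_modp} and \ref{the_comb_adic}.

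The first step is therefore to enumerate all triples $[G_{p_1}, G_{p_2}, G_{p_3}]$ all three of whose length-2 subtypes appear in those ``infinite'' lists. A direct enumeration against Theorems \ref{the_comb_modp}(A) and \ref{the_comb_adic}(A) produces only a small handful of candidates, essentially confined to $\{p_1,p_2,p_3\} \subseteq \{2,3,5,7,11,13\}$ with $p_1 = 2$ or $p_1 = 3$ (since those are the only primes that occur in length-2 infinite types together with a second small prime). In particular, the only candidate triples at squarefree level that need to be examined in any detail are of the form $[G_2, G_3, G_p]$ and $[G_2, G_3^{\mathrm{adic}}, G_p]$ for $p\in\{5,7\}$, plus the adic extensions obtained by replacing $G_2$ by one of \texttt{4X3}, \texttt{4X7}, \texttt{8X4}, \texttt{8X5} and $G_3$ by \texttt{9XE}.

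Next, for each surviving candidate triple, associate to it the group $G\subseteq \GL_2(\Z/N\Z)$ of the appropriate (prime-power) level $N$ as described in Section \ref{sec:maintheorems}, and compute the genus of $X_G$ from $G$ alone by the elliptic points/cusps count (the exact procedure already used in Section \ref{outline}, following \cite[Lemma 2.4]{SZ} and \cite[Theorem 3.1.1]{DS}). If the genus is $\ge 2$, then $X_G(\Q)$ is finite by Faltings' theorem, so only finitely many $\Qbar$-classes can have this triple as a type, and the candidate is discarded. If the genus is $0$ or $1$, we instead realize $X_G$ as an iterated fiber product $X_{G_{p_1}}\times_{\PP^1} X_{G_{p_2}}\times_{\PP^1} X_{G_{p_3}}$, using the same models and $j$-maps that appear in Tables \ref{jmapsj0coarse}, \ref{jmapsj0fine}, \ref{tab:gen1PosRank}, \ref{tab:gen1r0}, and then check whether $X_G(\Q)$ is infinite (i.e.\ genus $0$ with a rational point, or genus $1$ with positive Mordell--Weil rank).

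The main obstacle, and where the real work lies, is the explicit treatment of those few low-genus triples. I would expect essentially every candidate to have $X_G$ of genus $\ge 2$: most of the length-2 pieces that appear in the infinite lists already have pairwise fiber products that are genus $\ge 1$ with rank $0$ (cf.\ Tables \ref{tab:gen1r0} and \ref{tab:g2}), and adding a third prime imposes further ramification, which under Riemann--Hurwitz pushes the genus up. A small number of triples, namely $[4X3, 3B, 5S4]$, $[8X4, 3B, 5B]$ and $[8X4, 3B, 7B]$ from Theorem \ref{the_comb_adic}(B), require the extra check that even though they project to pairs appearing in the infinite lists, the corresponding triple modular curve is in fact higher genus (or rank $0$ genus $1$); this is exactly what Section \ref{outline} records, and I would verify it by carrying out the fiber product construction explicitly and confirming the genus/rank. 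Once every candidate triple is dispatched in this way, no (adically-)exceptional type of length $3$ can have infinitely many $\Qbar$-classes of non-CM elliptic curves over $\Q$, proving the lemma.
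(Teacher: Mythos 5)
Your overall strategy is the same as the paper's: reduce to the requirement that all three length-2 subtypes occur infinitely often, enumerate the surviving triples against part (A) of Theorems \ref{the_comb_modp} and \ref{the_comb_adic}, and then kill the survivors by a genus computation on the triple fiber product. However, as written the proposal leaves both substantive steps as expectations rather than verified facts, and where you do gesture at the outcome of the enumeration you get it wrong. The correct enumeration collapses dramatically: for a triple involving primes $3$ and $5$ (or $7$), the only length-2 types $[G_3,G_q]$ in the infinite lists are $[\texttt{3Nn},\texttt{5B}]$, $[\texttt{3Nn},\texttt{5Ns}]$, $[\texttt{3Nn},\texttt{5Nn}]$, $[\texttt{3Nn},\texttt{7Nn}]$, which forces $G_3=\texttt{3Nn}$; the only infinite types $[G_2,\texttt{3Nn}]$ are $[\texttt{2B},\texttt{3Nn}]$ (and adically $[\texttt{4X7},\texttt{3Nn}]$, which is then eliminated because $[\texttt{4X7},G_5]$ infinite forces $G_5=\texttt{5S4}$ while $[\texttt{3Nn},\texttt{5S4}]$ is not in the infinite list); and $[\texttt{2B},G_5]$ then forces $G_5=\texttt{5B}$. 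So there is exactly \emph{one} candidate, $[\texttt{2B},\texttt{3Nn},\texttt{5B}]$. The triples you single out for ``extra checks'' --- $[\texttt{4X3},\texttt{3B},\texttt{5S4}]$, $[\texttt{8X4},\texttt{3B},\texttt{5B}]$, $[\texttt{8X4},\texttt{3B},\texttt{7B}]$ --- are not candidates at all: their subtypes $[\texttt{3B},\texttt{5S4}]$, $[\texttt{3B},\texttt{5B}]$, $[\texttt{3B},\texttt{7B}]$ already occur only finitely often by Theorem \ref{the_comb_modp}(B), so they are discarded at the very first step.

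The remaining content of the proof is then a single concrete computation, which you cannot replace by ``I would expect essentially every candidate to have genus $\ge 2$'': the modular curve for $[\texttt{2B},\texttt{3Nn},\texttt{5B}]$ is the fiber product of $X_{\texttt{3Nn}}$ with $X_0(10)$, which has genus $2$ with hyperelliptic model $y^2=x^6-18x^3+1$; its Jacobian has rank $0$, and Chabauty shows all rational points are cusps (for the lemma as stated, Faltings already suffices once the genus is known to be $2$). Your justification of the first reduction is also more elaborate than necessary --- no finite-fibers argument is needed, since any curve of the length-3 type is literally of each length-2 subtype, so the set of $\Qbar$-classes of the triple type is contained in that of each pair type. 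In summary: right skeleton, but the enumeration must actually be carried out (and yields one triple, not ``a small handful''), and the genus of that one curve must actually be computed.
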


\begin{proof}
Let us suppose that there is a type of length 3 of the form $[G_1,G_2,G_3]$ such that there exist infinitely many non-isomorphic classes of non-CM elliptic curves over $\Q$ with that type. In order for this to be the case it must be that all the subtypes of length 2, $[G_i,G_j]$, occur for infinitely many $\Qbar$-isomorphism classes of elliptic curves. Looking for all the possibilities in all the pairs at (A) in Theorems \ref{the_comb_modp} and \ref{the_comb_adic} we have that the unique possibility is \texttt{[3Nn,5B,2B]}. The modular curve associated to \texttt{[3Nn,5B,2B]} is equivalent to the fiber product of $X_\texttt{3Nn}$ and $X_\texttt{10B}=X_0(10)$. 
This modular curve\footnote{A remarkable fact is that this genus 2 curve is new modular of level 90 and its jacobian is $\Q$-isogenous to the product of two elliptic curves (see \cite{baker}).} has genus 2 and an hyperelliptic model is $C:y^2 = x^6 - 18x^3 + 1$. Since its jacobian has rank 0  we can apply Chabauty to obtain all the rational points. We have obtained that all the rational points correspond to cusps. Therefore there not exist any triples $[G_1,G_2,G_3]$ of exceptional types or adically-exceptional types such that there exist infinitely many non-isomorphic classes of non-CM elliptic curves over $\Q$ of that type.
\end{proof}

{
In order to complete the proof of Theorem \ref{thm_serre_constant} we still need to justify that there are infinitely many distinct $\Qbar$-isomorphism classes of elliptic curves defined over $\Q$ with Serre's constant corresponding to the levels of all the modular curves associated to the types that appear in Tables  \ref{jmapsj0coarse}, \ref{jmapsj0fine} and \ref{tab:gen1PosRank}. We have justified already that all the others appears, but there is a subtlety that still needs to be sorted out with 104.}  While we have seen that there are infinitely many $\Qbar$-isomorphism classes of elliptic curves of type \texttt{[8X4,13B]} since the modular curve corresponding to \texttt{[8X4,13B]} is genus 1 of positive rank. It is possible that for all but finitely many of these $\Qbar$-isomorphism classes the mod 2 or mod 4 image is not actually surjective while still being contained inside \texttt{8X4}. Therefore, all but finitely many of these curve might have smaller Serre's constant (either 26 or 52). 
We start by showing that if $E/\Q$ has type \texttt{[8X4,13B]}, then the mod 2 Galois representation associated to $E$ must be surjective. 
Since having mod 8 image contained in the group associated to \texttt{8X4} corresponds to having $\Delta(E) = -2t^2$ for some $t\in \Q$ we know that the only way that $E$ could have non-surjective mod 2 image would be for $E$ to have a rational point of order 2. This, of course, is impossible for an elliptic curve of type \texttt{[8X4,13B]} since any such curve already has a 13-isogeny and there are no elliptic curves over $\Q$ with a 26-isogeny. 
Lastly, using the database in \cite{RZB} we see that in order for an elliptic curve to have surjective image mod 2, be of type \texttt{[8X4,13B]}, and to not have surjective image mod 4 is for the mod 4 image to be contained in the group associated to \texttt{4X7}. This would mean that $E$ also has type \texttt{[4X7,13B]}, but our computations show that the modular curve corresponding to \texttt{[4X7,13B]} is genus 3 and so there can be at most finitely many such $\Qbar$-isomorphism classes. Thus, there must be infinitely many curves with Serre's constant $104$ and we have completed the proof of Theorem \ref{thm_serre_constant}. That is, $\mathcal A_\infty$ is the set consisting of the levels of all the modular curves associated to the types that appear in Tables  \ref{jmapsj0coarse}, \ref{jmapsj0fine} and \ref{tab:gen1PosRank}. 

\

{To finish our project} we still need to complete the classification of possible (adic-)types and Serre's constant. To do this we fix our attention to the associated modular curves with finitely many rational points. That is, genus 1 and rank 0, or genus >1:\\
{$\bullet $ Genus 1 and rank 0:}  There are $28$ elliptic curves of rank 0. Only $8$ have points that correspond to non-CM elliptic curves. All the data appears in Tables \ref{tab:gen1r0} and \ref{tab:gen1r0fine}. {The case \texttt{[4X7,3B]} does not appear in that table: 
\begin{itemize}
\item[$\star$] The modular curve associated to the type \texttt{[4X7,3B]} is the elliptic curve $\mathcal E/\Q$ with Cremona label \texttt{48a6} and with Mordell-Weil group $\mathcal E(\Q)\simeq \Z/8\Z$. These points give the non-CM $j$-invariants $j_1=-3^3\cdot 11^3/2^2$ and $j_2=3^2\cdot 23^3/2^6$. Now, let $j(t)$ be the j-map (see Table \ref{jmapsj0coarse}) of the genus 0 modular curve associated to the type \texttt{[4X3,3B]}. Therefore $j(-1/2)=j_1$ and $j(-1)=j_2$. Therefore there are no elliptic curves with type \texttt{[4X7,3B]} as any curve with this combination of images actually has smaller images \texttt{[4X20,3B]}.  Here \texttt{4X7} referes to the level 4 group \texttt{X20} in the notation of \cite{RZB}.
\end{itemize}
{$\bullet $ Genus = 2:}  We have computed all the rational points of all the modular curves of genus 2. In all the cases we have not obtained non-CM elliptic curves, except in the case \texttt{[4X7,7B]}: 
\begin{itemize}
\item[$\star$] The associated modular curve to the type \texttt{[4X7,7B]} has genus 2 and we have computed all its rational points. In this cases the non-CM non-singular j-invariants are $j_1=3^3\,13/2^2$ and $j_2=-3^3\cdot 13\cdot 479^3/2^{14}$ (see Table \ref{tab:g2}). That corresponds to evaluating the $j$-map of the genus 0 modular curve associated to the type \texttt{[4X3,7B]} at the values $7/2$ and $2$ respectively.  Therefore any elliptic curve $E$ with those $j$-invariants satisfies that $G^\infty_E(2)$ is conjugate to a subgroup of \texttt{4X3} and \texttt{4X7}, and $G^\infty_E(7)$ is conjugate to a subgroup of \texttt{7B}.
\end{itemize}
{$\bullet $ Genus > 2:}  We did a point search to find all easily visible points of bounded heigh. We have found only non-CM, non-cusps in the modular curve associated to the types \texttt{[8X5,7Ns]} and \texttt{[4X7,9XE]}. But the case \texttt{[8X5,7Ns]} does not appear at the statement:
\begin{itemize}
\item[$\star$] The corresponding modular curve to the type \texttt{[8X5,7Ns]} has genus 3 and searching for points of height less than or equal to $10^6$ yields only one non-CM non-singular $j$-invariant, $j= 3^3\cdot 5\cdot 7^5/2^7$ (see Table \ref{rema}) that corresponds to the unique $j$-invariant for the case $G_E(7)$ labeled as \texttt{7Ns.3.1} (see Table \ref{jmaps}).
\end{itemize}

The above proves (B) of Theorems \ref{the_comb_modp} and \ref{the_comb_adic}.  

\begin{example}
Let $E$ be the elliptic curve given by Weierstrass equation $y^2+xy+y=x^3-126x-552$. This curve has Cremona reference \href{http://www.lmfdb.org/EllipticCurve/Q/50a1}{\texttt{50a1}}, it does not have CM and according to LMFDB \cite{lmfdb}, $G_E(3) =\texttt{3B.1.2}$, $G_E(5) =\texttt{5B.1.3}$, and $G_E(p) = \GL_2(\Z/p\Z)$ for every other prime $p$. Checking the 2-adic representation in \cite{lmfdb} (which uses the data collected in \cite{RZB}) we see that the image of $\rho_{E,2^\infty}$ is the pullback of the group \texttt{8X4}
under the standard reduction map $\pi:\GL_2(\Z_2) \to\GL_2(\Z/8\Z)$. So the 2-adic representation associated to $E$ is not surjective and $A(E) = 120$. Here we point out that if we let $\pi_k:\GL_2(\Z/8\Z) \to \GL_2(\Z/2^k\Z)$ be the standard componentwise reduction map for $k=1,$ and 2, then $\pi_1(H) = \GL_2(\Z/2\Z)$ and $\pi_2(H) = \GL_2(\Z/4\Z)$ where $H = G_E^\infty(2)$. Therefore, ${\rho}_{E,2}$ and ${\rho}_{E,4}$ are both surjective. The modular curve whose points correspond to elliptic curves with these mod 3, 5, and 8 images is genus greater than 1, so there are only finitely many $\Qbar$-isomorphism classes of elliptic curves with this particular combination of images. Moreover, since we have obtained all the rational points at the modular curve associated to \texttt{[3B,5B]} we are done. 
\end{example}

\begin{remark}\label{triples}
In fact we can check that for the four  $j$-invariants coming from the modular curve \texttt{[3B,5B]} have that the image of $\rho_{E,2^\infty}$ is the pullback of the group \texttt{8X4}. Similarly with \texttt{[3B,7B]} and \texttt{8X4}; and \texttt{[3B,5S4]} with \texttt{8X3}. In particular this completes the computation of all the points on the modular curves associated to 
\texttt{[4X3,3B,5S4]}, \texttt{[8X4,3B,5B]} and \texttt{[8X4,3B,7B]}.
\end{remark}

Therefore we have obtained all the possible $j$-invariants for the item (B) at Theorems \ref{the_comb_modp}  and  \ref{the_comb_adic}, except (maybe) the type \texttt{[4X7,9XE]}. The modular curve associated to the former case has genus 6. In this article we have not tried to compute all the rational points of such a curve. This will be done in an ongoing project \cite{DGJ2}.

%
%

\section*{Appendix: Tables}

In this section, we give tables of data that summarize the results that we used in our computations as well as the data that we collected. Tables \ref{jmaps}, \ref{table_mod_single}, and \ref{Eq_finemoduli} are taken from the results in \cite{zywina1} where Zywina does a search for all possible images of the mod $p$ representations associated to non-CM elliptic curves over $\Q$ and then computes the moduli spaces for the ones that actually occur. Throughout \cite{zywina1} Zywina is careful to distinguish between subgroup of $\GL_2(\Z/p\Z)$ that do and do not contain $-I$. This is because if $E$ and $E'$ are quadratic twists of each other then we do not necessarily know that $G_E(p) = G_{E'}(p)$. In this case all that can be said is that $\langle G_E(p), -I\rangle = \langle G_{E'}(p), -I \rangle$. Therefore, if $G\subseteq \GL_2(\Z/p\Z)$ contains $-I$, then $G_E(p)\subseteq G$ if and only if $G_{E'}(p)\subseteq G$, while if $-I\not\in G$ then it is possible that $G_E(p) \subseteq G$ and $G_{E'}(p)\not\subseteq G$. Combining this with the fact that two non-CM elliptic curves $E/\Q$ and $E'/\Q$ are $\Qbar$-isomorphic if and only if they are quadratic twists of each other, we have that the moduli spaces associated to groups containing $-I$ are completely determined by a $j$-map and are coarse moduli spaces (See Table \ref{jmaps}). On the other hand, the moduli spaces associated to groups that do not contain $-I$ are called fine moduli spaces and are given by elliptic surfaces such that each nonsingular specialization is a representative of a $\Q$-isomorphism class with the given type (See Table \ref{Eq_finemoduli}). We have included these three tables for the sake of completeness.

There are places in the tables where the polynomials that need to be written are too complicated to fit in the space provided. In those cases we simplify the entry by defining some notation for factors of the polynomials. The extra data is then presented at the end of the Appendix organized by table number. \\

Below we give a description of each of the tables:
\begin{itemize}
\item Table \ref{jmaps}: For each possible groups $G_E(p) \neq \GL_2(\Z/p\Z)$ with $-I\in G_E(p)$ we give Sutherland's and Zywina's labels, the level, generators, the (possibly constant) $j$-maps, and the Cremona label of an elliptic curve with minimal conductor with mod $p$ image equal to the given group.
\item Table \ref{table_mod_single}: For each possible groups $G_E(p) \neq \GL_2(\Z/p\Z)$ with $-I\not\in G_E(p)$ we give Sutherland's and Zywina's labels, the level, generators, and the Cremona label of an elliptic curve with minimal conductor with mod $p$ image equal to the given group.
\item Table \ref{Eq_finemoduli}: For each possible groups $G_E(p) \neq \GL_2(\Z/p\Z)$ with $-I\not\in G_E(p)$ we give an elliptic curve model for the fine moduli.
\item Table \ref {tab:adic_groups}: For each group $G_E(p^k)$ with $k \geq 2$ that surjects onto $\GL_2(\Z/p\Z)$ we give the level, generators $j$-map, and the Cremona label of an elliptic curve with minimal conductor with mod $p$ image equal to the given group.
\item Table \ref {tab:jconstant}: For each group in Table \ref{jmaps} that only has finitely many $\Qbar$-isomorphism classes, we check if the corresponding $j$-invariants are in the image of the $j$-map of other modular curves. The results of this checking are compiled in this table.
\item Table \ref{jmapsj0coarse}: For each exceptional type $[G_E(p),G_E(q)]$ such that $G_E(p)$ and $G_E(q)$ contain $-I$ and the corresponding modular curve has genus 0 we give the $j$-map associated to the curve and the Cremona label of an elliptic curve with minimal conductor and that type. We do the same thing for  maximal adically exceptional types.
\item Table \ref{jmapsj0fine}: For each exceptional type $[G_E(p),G_E(q)]$ such that $G_E(p)$ and $G_E(q)$ with $-I$ not in one of the groups such that the corresponding modular curve has genus 0 we give the parametrization to obtain a fine moduli  associated to the curve and the Cremona label of an elliptic curve with minimal conductor and that type.
\item Table \ref{tab:gen1nonCM}: For each exceptional type such that the corresponding modular curve is genus 1 we give the the Cremona reference of the modular curve as well as the structure of the Mordell-Weil group of the modular curve in the case of the curve is elliptic, otherwise we show a reason why does not have rational points. We do the same thing for maximal adically-exceptional types.
\item Table \ref{tab:gen1PosRank}: For each modular curve in Table \ref{tab:gen1nonCM} with positive rank, except \texttt{[8X5,3Nn]} and \texttt{[3Nn,5S4]} (see \S \ref{proofs}), we give the $j$-map, Cremona reference of the modular curve, and the Cremona reference of an elliptic curve with minimal conductor (if this is less than $400.000$) and that (adic-)type. 
\item Table \ref{ex3Nn}: For each modular curve in Table \ref{tab:gen1PosRank} with positive rank such that the examples of minimal conductor are greater than 400000 and thus does not have a page in the LMFDB, we give a minimal model of the an elliptic curve with that (adic-)type and its conductor. 
\item Table \ref{tab:gen1r0}: For each modular curve in Table \ref{tab:gen1nonCM} with rank 0 and $-I$ in both groups we give a complete list of the $j$-invariants that correspond to all the $\Qbar$-isomorphic classes with that (adic-)type, and the Cremona labels of examples of elliptic curves with that combination of representations of minimal conductor. 
\item Table \ref{tab:gen1r0fine}: For each modular curve in Table \ref{tab:gen1nonCM} with rank 0 and $-I$ not in both groups we give a complete list of the Cremona labels of all elliptic curves with that (adic-)type.
\item Table \ref{tab:g2}: For each possible (adic-)type whose curve has genus 2 we give a complete list of non-cusps and rational points and their corresponding elliptic curves $j$-invariants.
\item Table \ref{rema}: We give a list of the modular curves that one would classify all the non-cusps and rational points in order to prove Conjecture \ref{conj:A} {under the assumption of uniformity}. For each type we add a subscript indicating the genus of the corresponding modular curve, a superscript of \text{cm} when the only points we have found correspond to elliptic curves with CM or a superscript of $\emptyset$ in the case that we have not found any rational point. {For all those curves we have found only CM and/or cusps, or nothing in the modular curve associated to those types except for the pair \texttt{[4X7,9XE]}, where we have found the $j$-invariant $j=-2^2 3^7 5^3 439^3$. For this $j$-invariant we have checked that does not appear neither for types in Table \ref{jmaps} nor in Table \ref{tab:adic_groups}. Assuming that for the remaining curves those are all the points we are done to prove Conjecture \ref{conj:A} under the assumption of uniformity, since the set of types that appear at Table \ref{rema} corresponds to a set of maximal groups that cover all the possible (adically-)exceptional pairs. Note that if $G'\subseteq G$ then there exists a non-constant morphism $X_{G'}\rightarrow X_G$ and if we prove that $X_G(\Q)$ only corresponds to cusps and CM $j$-invariants, then $X_{G'}(\Q)$ corresponds to cusps and CM $j$-invariants.}

\item Table \ref{adicex}: We give examples of elliptic curves with a given (adic-)type whose modular curves have genus greater than one and whose points correspond to elliptic curves that do not appear in the LMFDB. 
\end{itemize}

\begin{remark}
All the \texttt{Magma} \cite{magma} code to compute the tables in this appendix is available in the online supplement \cite{MagmaCode}. Some of the code for this paper was taken from \cite{Q(3)}.
\end{remark}


{
\extrarowheight = -0.5ex
\renewcommand{\arraystretch}{2.15}
\begin{table}
\begin{footnotesize}
\begin{tabular}{|c|c|c|c|l|c|}
\hline
Sutherland &   Zywina & Level & Generators & j-map & Example\\
\hline
\texttt{2Cs} & $G_1$ & 2 & & $256 \frac{(t^2+t+1)^3}{t^2(t+1)^2}$& \href{http://www.lmfdb.org/EllipticCurve/Q/15a1}{\texttt{15a1}} \\
\hline
 \texttt{2B} &$G_2$ & 2& $\smallmat{1}{1}{0}{1}$ &  $256\frac{(t+1)^3}{t}$ & \href{http://www.lmfdb.org/EllipticCurve/Q/15a4}{\texttt{15a4}}\\
\hline
\texttt{2Cn} &  $G_3$& 2 & $\smallmat{0}{1}{1}{1}$ & $t^2+1728$& \href{http://www.lmfdb.org/EllipticCurve/Q/392b1}{\texttt{392b1}}\\
\hline
\hline
\texttt{3Cs} & $G_1$ & 3 & $\smallmat{2}{0}{0}{2},\smallmat{1}{0}{0}{2}$ & $ 27\frac{(t+1)^3(t+3)^3(t^2+3)^3}{t^3(t^2+3t+3)^3}$& \href{http://www.lmfdb.org/EllipticCurve/Q/175b2}{\texttt{175b2}}   \\
\hline
\texttt{3Ns} &$G_2$ & 3  & $\smallmat{2}{0}{0}{2},\smallmat{0}{2}{1}{0},\smallmat{1}{0}{0}{2}$ & $ 27\frac{(t+1)^3(t-3)^3}{t^3}$& \href{http://www.lmfdb.org/EllipticCurve/Q/1210d1}{\texttt{1210d1}} \\
\hline
\texttt{3B} & $G_3$ & 3  & $\smallmat{2}{0}{0}{2},\smallmat{1}{0}{0}{2},\smallmat{1}{1}{0}{1}$ & $ 27\frac{(t+1)(t+9)^3}{t^3}$ & \href{http://www.lmfdb.org/EllipticCurve/Q/175b1}{\texttt{175b1}} \\
\hline
\texttt{3Nn} & $G_4$ & 3  &$\smallmat{1}{0}{0}{2},\smallmat{2}{1}{2}{2}$ & $t^3$ & \href{http://www.lmfdb.org/EllipticCurve/Q/245a1}{\texttt{245a1}}  \\
\hline
\hline
\texttt{5Cs.4.1} & $G_1$ & 5  & $\smallmat{4}{0}{0}{4},\smallmat{1}{0}{0}{2}$ &  $ \frac{(t^{20}+228t^{15}+494t^{10}-228t^5+1)^3}{t^5(t^{10}-11t^5-1)^5}$& \href{http://www.lmfdb.org/EllipticCurve/Q/99d2}{\texttt{99d2}}\\
\hline
\texttt{5Cs} &  $G_2$ & 5   & $\smallmat{2}{0}{0}{3},\smallmat{1}{0}{0}{2}$ & $\frac{(t^2 + 5t + 5)^3(t^4 + 5t^2 + 25)^3(t^4 + 5t^3 + 20t^2 + 25t + 25)^3}{t^5(t^4 + 5t^3 + 15t^2 + 25t + 25)^5}$  & \href{http://www.lmfdb.org/EllipticCurve/Q/18176b2}{\texttt{18176b2}} \\
\hline
\texttt{5Ns.2.1} & $G_3$ & 5   & $\smallmat{2}{0}{0}{3},\smallmat{0}{1}{3}{0}$ & $\frac{5^4 t^3 (t^2+5t+10)^3 (2t^2+5t+5)^3 (4t^4+30t^3+95t^2+150t+100)^3}{(t^2+5t+5)^5(t^4+5t^3+15t^2+25t+25)^5}$ & \href{http://www.lmfdb.org/EllipticCurve/Q/6975a1}{\texttt{6975a1}}\\
\hline
\texttt{5Ns} & $G_{4}$ & 5   & $\smallmat{0}{4}{1}{0},\smallmat{2}{0}{0}{3},\smallmat{1}{0}{0}{2}$ & $ \frac{(t + 5)^3 (t^2 - 5)^3(t^2 + 5t + 10)^3}{(t^2 + 5t + 5)^5}$& \href{http://www.lmfdb.org/EllipticCurve/Q/608b1}{\texttt{608b1}} \\
\hline
\texttt{5B.4.2} & $G_{5}$ & 5   & $\smallmat{4}{0}{0}{4},\smallmat{2}{0}{0}{1},\smallmat{1}{1}{0}{1}$ & $  \frac{(t^4+228t^3+494t^2-228t+1)^3}{t(t^2-11t-1)^5}$& \href{http://www.lmfdb.org/EllipticCurve/Q/99d3}{\texttt{99d3}} \\
\hline
\texttt{5B.4.1} & $G_{6}$ & 5   & $\smallmat{4}{0}{0}{4},\smallmat{1}{0}{0}{2},\smallmat{1}{1}{0}{1}$ &  $\frac{(t^4-12t^3+14t^2+12t+1)^3}{t^5(t^2-11t-1)}$& \href{http://www.lmfdb.org/EllipticCurve/Q/99d1}{\texttt{99d1}} \\
\hline
\texttt{5Nn} & $G_{7}$ & 5   & $\smallmat{1}{0}{0}{4},\smallmat{2}{3}{4}{2}$ & $\frac{5^3 (t+1)(2t+1)^3(2t^2-3t+3)^3}{(t^2+t-1)^5}$& \href{http://www.lmfdb.org/EllipticCurve/Q/675b1}{\texttt{675b1}} \\
\hline
\texttt{5B} & $G_{8}$ & 5   & $\smallmat{2}{0}{0}{3},\smallmat{1}{0}{0}{2},\smallmat{1}{1}{0}{1}$ &  $\frac{5^2(t^2+10t+5)^3}{t^5}$  & \href{http://www.lmfdb.org/EllipticCurve/Q/867c1}{\texttt{867c1}} \\
\hline
\texttt{5S4} & $G_{9}$ & 5   & $\smallmat{0}{3}{3}{4},\smallmat{2}{0}{0}{2},\smallmat{3}{0}{4}{4}$ & $t^3(t^2 + 5t + 40)$& \href{http://www.lmfdb.org/EllipticCurve/Q/648a1}{\texttt{648a1}}   \\
\hline
\hline
\texttt{7Ns.3.1} & $G_{1}$ & 7   &  $\smallmat{3}{0}{0}{5},\smallmat{0}{1}{4}{0}$ & $3^3\cdot 5\cdot 7^5/2^7$ & \href{http://www.lmfdb.org/EllipticCurve/Q/2450a1}{\texttt{2450a1}}  \\
\hline 
\texttt{7Ns} & $G_{2}$ & 7    & $\smallmat{0}{6}{1}{0},\smallmat{3}{0}{0}{5},\smallmat{1}{0}{0}{3}$ &  $\frac{t(t + 1)^3(t^2 - 5t + 1)^3(t^2 - 5t + 8)^3(t^4 - 5t^3 + 8t^2 - 7t + 7)^3 }{(t^3 - 4t^2 + 3t + 1)^7}$& \href{http://www.lmfdb.org/EllipticCurve/Q/9225a1}{\texttt{9225a1}}  \\
\hline
\texttt{7B.6.1} & $G_{3}$  & 7    & $\smallmat{6}{0}{0}{6},\smallmat{1}{0}{0}{3},\smallmat{1}{1}{0}{1}$ & $\frac{(t^2 - t + 1)^3(t^6 - 11t^5 + 30t^4 - 15t^3 - 10t^2 + 5t + 1)^3}{(t - 1)^7 t^7(t^3 - 8t^2 + 5t + 1)}$ & \href{http://www.lmfdb.org/EllipticCurve/Q/208d1}{\texttt{208d1}}\\
\hline
\texttt{7B.6.3} & $G_{4}$  & 7    &$\smallmat{6}{0}{0}{6},\smallmat{3}{0}{0}{1},\smallmat{1}{1}{0}{1}$ &   $\frac{(t^2 - t + 1)^3(t^6 + 229t^5 + 270t^4 - 1695t^3 + 1430t^2 - 235t + 1)^3}{(t - 1)t(t^3 - 8t^2 + 5t + 1)^7}$& \href{http://www.lmfdb.org/EllipticCurve/Q/208d2}{\texttt{208d2}}  \\
\hline
\texttt{7B.6.2} & $G_{5}$ & 7     &$\smallmat{6}{0}{0}{6},\smallmat{2}{0}{0}{5},\smallmat{1}{1}{0}{1}$ & $-\frac{(t^2-3t-3)^3(t^2-t+1)^3(3t^2-9t+5)^3(5t^2-t-1)^3}{(t^3-2t^2-t+1) (t^3-t^2-2t+1)^7} $ & \href{http://www.lmfdb.org/EllipticCurve/Q/5733d1}{\texttt{5733d1}}\\
\hline
\texttt{7Nn} & $G_{6}$  & 7    & $\smallmat{1}{0}{0}{6},\smallmat{2}{5}{4}{2}$ & $\frac{64 t^3(t^2+7)^3(t^2-7t+14)^3(5t^2-14t-7)^3}{(t^3-7t^2+7t+7)^7}$& \href{http://www.lmfdb.org/EllipticCurve/Q/15341a1}{\texttt{15341a1}}\\
\hline
\texttt{7B} & $G_{7}$  & 7   &$\smallmat{3}{0}{0}{5},\smallmat{1}{0}{0}{3},\smallmat{1}{1}{0}{1}$ &   $\frac{(t^2+245t+2401)^3(t^2+13t+49)}{t^7}$& \href{http://www.lmfdb.org/EllipticCurve/Q/338a1}{\texttt{338a1}}\\
\hline
\hline
\texttt{11B.10.4} & $G_{1}$ & 11    & $\smallmat{10}{0}{0}{10},\smallmat{4}{0}{0}{6},\smallmat{1}{1}{0}{1}$ & $-11^2$& \href{http://www.lmfdb.org/EllipticCurve/Q/1089f2}{\texttt{1089f2}} \\
\hline
\texttt{11B.10.5} & $G_{2}$& 11   &$\smallmat{10}{0}{0}{10},\smallmat{5}{0}{0}{7},\smallmat{1}{1}{0}{1}$ &  $-11\cdot 131^3$ & \href{http://www.lmfdb.org/EllipticCurve/Q/1089f1}{\texttt{1089f1}} \\
\hline
\texttt{11Nn} & $G_{3}$& 11   & $\smallmat{1}{0}{0}{10},\smallmat{3}{5}{8}{3}$ &  $\frac{P_{11}(x,y)^3}{(11y+(2x^2+17x-34))^2( (x-4)y-(5x-9))^{11}}\,,\quad y^2+y = x^3-x^2-7x+10 $ & \href{http://www.lmfdb.org/EllipticCurve/Q/232544f1}{\texttt{232544f1}}  \\
\hline
\hline
\texttt{13B.5.2} & $G_{1}$ & 13   &$\smallmat{5}{0}{0}{8},\smallmat{2}{0}{0}{1},\smallmat{1}{1}{0}{1}$  & $ \frac{(t^2 - t + 1)^3P_{13}(t)^3}{(t - 1)t(t^3 - 4t^2 + t + 1)^{13}}$ & \href{http://www.lmfdb.org/EllipticCurve/Q/2890d2}{\texttt{2890d2}}  \\
\hline
\texttt{13B.5.1} & $G_{2}$ & 13  & $\smallmat{5}{0}{0}{8},\smallmat{1}{0}{0}{2},\smallmat{1}{1}{0}{1}$ &  $\frac{(t^2 - t + 1)^3(t^{12} - 9t^{11} + 29t^{10} - 40t^9 + 22t^8 - 16t^7 + 40t^6 - 22t^5 - 23t^4 + 25t^3 - 4t^2 - 3t + 1)^3}{(t - 1)^{13} t^{13} (t^3 - 4t^2 + t + 1)}$& \href{http://www.lmfdb.org/EllipticCurve/Q/2890d1}{\texttt{2890d1}}\\
\hline
\texttt{13B.5.4} & $G_{3}$  & 13 &  $\smallmat{5}{0}{0}{8},\smallmat{4}{0}{0}{7},\smallmat{1}{1}{0}{1}$ &  $-\frac{13^4(t^2-t+1)^3 (t^4-t^3+2t^2-9t+3)^3(3t^4-3t^3-7t^2+12t-4)^3(4t^4-4t^3-5t^2+3t-1)^3}{(t^3-4t^2+t+1)^{13} (5t^3-7t^2-8t+5)} $ & \href{http://www.lmfdb.org/EllipticCurve/Q/216320i1}{\texttt{216320i1}} \\
\hline
\texttt{13B.4.2} & $G_{4}$  & 13 & $\smallmat{4}{0}{0}{10},\smallmat{2}{0}{0}{1},\smallmat{1}{1}{0}{1}$ &  $\frac{(t^4 - t^3 + 5t^2 + t + 1)(t^8 + 235t^7 + 1207t^6 + 955t^5 + 3840t^4 - 955t^3 + 1207t^2 - 235t+ 1)^3}{t (t^2 - 3t - 1)^{13}} $ & \href{http://www.lmfdb.org/EllipticCurve/Q/147c2}{\texttt{147c2}} \\
\hline
\texttt{13B.4.1} & $G_{5}$  & 13 &$\smallmat{4}{0}{0}{10},\smallmat{1}{0}{0}{2},\smallmat{1}{1}{0}{1}$ &  $ \frac{(t^4 - t^3 + 5t^2 + t + 1) (t^8 - 5t^7 + 7t^6 - 5t^5 + 5t^3 + 7t^2 + 5t + 1)^3}{t^{13} (t^2 - 3t - 1)}$ & \href{http://www.lmfdb.org/EllipticCurve/Q/147c1}{\texttt{147c1}} \\
\hline
\texttt{13B} & $G_{6}$  & 13 & $\smallmat{2}{0}{0}{7},\smallmat{1}{0}{0}{2},\smallmat{1}{1}{0}{1}$ &  $\frac{(t^2+5t+13)(t^4+7t^3+20t^2+19t+1)^3}{t}$& \href{http://www.lmfdb.org/EllipticCurve/Q/2450bb1}{\texttt{2450bb1}}  \\
\hline
 \texttt{13S4} &$G_{7}$  & 13 & $\smallmat{3}{0}{12}{9},\smallmat{2}{0}{0}{2},\smallmat{9}{5}{0}{6}$ &    \color{black} $\frac{2^4\cdot 5\cdot 13^4\cdot 17^3}{3^{13}},-\frac{2^{12}\cdot 5^3\cdot 11\cdot 13^4}{3^{13}} \,\,\text{ or }\,\,\frac{2^{18}\cdot3^3\cdot 13^4\cdot 127^3\cdot 139^3\cdot 157^3\cdot 283^3\cdot 929}{5^{13}\cdot 61^{13}}$& \href{http://www.lmfdb.org/EllipticCurve/Q/50700l1}{\texttt{50700l1}}   \\
\hline
\hline
 \texttt{17B.4.2} & $G_{1}$  & 17 & $\smallmat{4}{0}{0}{13},\smallmat{2}{0}{0}{10},\smallmat{1}{1}{0}{1}$ &  $ -17\cdot 373^3/2^{17}$ & \href{http://www.lmfdb.org/EllipticCurve/Q/14450bk1}{\texttt{14450bk1}} \\
\hline
\texttt{17B.4.6} & $G_{2}$   & 17& $\smallmat{4}{0}{0}{13},\smallmat{6}{0}{0}{9},\smallmat{1}{1}{0}{1}$ &  $-17^2 \cdot 101^3/2 $ & \href{http://www.lmfdb.org/EllipticCurve/Q/14450bk2}{\texttt{14450bk2}} \\
\hline
\hline
\texttt{37B.8.1} & $G_{1}$   & 37& $\smallmat{8}{0}{0}{14},\smallmat{1}{0}{0}{2},\smallmat{1}{1}{0}{1}$ &  $-7\cdot 11^3 $ & \href{http://www.lmfdb.org/EllipticCurve/Q/1225e1}{\texttt{1225e1}}\\
\hline
\texttt{37B.8.2} & $G_{2}$  & 37 & $\smallmat{8}{0}{0}{14},\smallmat{2}{0}{0}{1},\smallmat{1}{1}{0}{1}$ &  $-7\cdot 137^3\cdot 2083^3 $ & \href{http://www.lmfdb.org/EllipticCurve/Q/1225e2}{\texttt{1225e2}} \\
 \hline
\end{tabular}
\end{footnotesize}
\caption{Groups $G_E(p)$ containing $-I$, for non-CM elliptic curves $E/\Q$}\label{jmaps}
\vspace{10pt}
\end{table}
}


{\scriptsize
\begin{center}
\begin{tabular}{ccc}
\begin{tabular}{|c|c|c|c|}

\hline
Sutherland &   Zywina & Model &  Example\\
\hline
\texttt{ 3B.1.1 } & $H_{3,1}$ &  $\mathcal{E}_{3,3}$ & \href{http://www.lmfdb.org/EllipticCurve/Q/19a3}{\texttt{19a3}}  \\ 
\texttt{ 3B.1.2 } & $H_{3,2}$ & $\mathcal{E}^{-3}_{3,3}$ & \href{http://www.lmfdb.org/EllipticCurve/Q/19a2}{\texttt{19a2}}  \\ 
\texttt{ 3Cs.1.1 } &  $H_{1,1}$ & $\mathcal{E}_{3,1}$ or $\mathcal{E}^{-3}_{3,1}$ & \href{http://www.lmfdb.org/EllipticCurve/Q/19a1}{\texttt{19a1}}  \\ 
\hline
\texttt{ 5B.1.1 } & $H_{6,1}$ & $\mathcal{E}_{5,6}$ & \href{http://www.lmfdb.org/EllipticCurve/Q/11a3}{\texttt{11a3}}  \\ 
\texttt{ 5B.1.2 } &  $H_{5,1}$ & $\mathcal{E}_{5,5}$ &\href{http://www.lmfdb.org/EllipticCurve/Q/11a2}{\texttt{11a2}}  \\ 
\texttt{ 5B.1.3 } &  $H_{5,2}$ & $\mathcal{E}^{5}_{5,5}$ & \href{http://www.lmfdb.org/EllipticCurve/Q/75a1}{\texttt{75a1}}  \\ 
\texttt{ 5B.1.4 } &  $H_{6,2}$ &$\mathcal{E}^{5}_{5,6}$ & \href{http://www.lmfdb.org/EllipticCurve/Q/75a2}{\texttt{75a2}}  \\ 
\texttt{ 5Cs.1.1 } &  $H_{1,1}$ & $\mathcal{E}_{5,1}$ &\href{http://www.lmfdb.org/EllipticCurve/Q/11a1}{\texttt{11a1}}  \\ 
\texttt{ 5Cs.1.3 } &  $H_{1,2}$ & $\mathcal{E}^{5}_{5,1}$ & \href{http://www.lmfdb.org/EllipticCurve/Q/275b2}{\texttt{275b2}}  \\ 
\hline
\end{tabular}
& 
\begin{tabular}{|c|c|c|c|}
\hline
Sutherland &   Zywina &  Model & Example\\
\hline
\texttt{ 7B.1.1 } &  $H_{3,1}$ & $\mathcal{E}_{7,3}$  & \href{http://www.lmfdb.org/EllipticCurve/Q/26b1}{\texttt{26b1}}  \\ 
\texttt{ 7B.1.2 } &  $H_{5,2}$ & $\mathcal{E}^{-7}_{7,5}$ & \href{http://www.lmfdb.org/EllipticCurve/Q/637a1}{\texttt{637a1}}  \\ 
\texttt{ 7B.1.3 } &  $H_{4,1}$ & $\mathcal{E}_{7,4}$ & \href{http://www.lmfdb.org/EllipticCurve/Q/26b2}{\texttt{26b2}}  \\ 
\texttt{ 7B.1.4 } &  $H_{4,2}$ & $\mathcal{E}^{-7}_{7,4}$ & \href{http://www.lmfdb.org/EllipticCurve/Q/294a1}{\texttt{294a1}}  \\ 
\texttt{ 7B.1.5 } &  $H_{5,1}$ & $\mathcal{E}_{7,5}$ & \href{http://www.lmfdb.org/EllipticCurve/Q/637a2}{\texttt{637a2}}  \\ 
\texttt{ 7B.1.6 } &  $H_{3,2}$ & $\mathcal{E}^{-7}_{7,3}$ & \href{http://www.lmfdb.org/EllipticCurve/Q/294a2}{\texttt{294a2}}  \\ 
\texttt{ 7B.2.1 } &  $H_{7,2}$ & $\mathcal{E}^{-7}_{7,7}$ & \href{http://www.lmfdb.org/EllipticCurve/Q/338b1}{\texttt{338b1}}  \\ 
\texttt{ 7B.2.3 } &  $H_{7,1}$ & $\mathcal{E}_{7,7}$  & \href{http://www.lmfdb.org/EllipticCurve/Q/338b2}{\texttt{338b2}}  \\ 
\texttt{ 7Ns.2.1 } &  $H_{1,1}$ & $\mathcal{E}_{7,1}$ or $\mathcal{E}^{-7}_{7,1}$ & \href{http://www.lmfdb.org/EllipticCurve/Q/2450ba1}{\texttt{2450ba1}}  \\ 
\hline
\end{tabular}
& 
\begin{tabular}{|c|c|c|c|}
\hline
Sutherland &   Zywina &  Model & Example\\
\hline
\texttt{ 11B.1.4 } &  $H_{1,1}$ & $\mathcal{E}_{11,1}$ &   \href{http://www.lmfdb.org/EllipticCurve/Q/121a2}{\texttt{121a2}} \\ 
\texttt{ 11B.1.5 } &  $H_{2,1}$&  $\mathcal{E}_{11,2}$ &\href{http://www.lmfdb.org/EllipticCurve/Q/121a1}{\texttt{121a1}}  \\ 
\texttt{ 11B.1.6 } &  $H_{2,2}$& $\mathcal{E}^{-11}_{11,2}$  &   \href{http://www.lmfdb.org/EllipticCurve/Q/121c2}{\texttt{121c2}}\\ 
\texttt{ 11B.1.7 } &  $H_{1,2}$& $\mathcal{E}^{-11}_{11,1}$ & \href{http://www.lmfdb.org/EllipticCurve/Q/121c1}{\texttt{121c1}}  \\ 
\hline

\texttt{ 13B.3.1 } &  $H_{5,1}$& $\mathcal{E}_{13,5}$ & \href{http://www.lmfdb.org/EllipticCurve/Q/147b1}{\texttt{147b1}}  \\ 
\texttt{ 13B.3.2 } &  $H_{4,1}$& $\mathcal{E}_{13,4}$ & \href{http://www.lmfdb.org/EllipticCurve/Q/147b2}{\texttt{147b2}}  \\ 
\texttt{ 13B.3.4 } &  $H_{5,2}$& $\mathcal{E}^{13}_{13,5}$ & \href{http://www.lmfdb.org/EllipticCurve/Q/24843o1}{\texttt{24843o1}}  \\ 
\texttt{ 13B.3.7 } &  $H_{4,2}$ & $\mathcal{E}^{13}_{13,4}$ & \href{http://www.lmfdb.org/EllipticCurve/Q/24843o2}{\texttt{24843o2}}  \\ 
\hline
\end{tabular}
\end{tabular}
\captionof{table}{Groups $G_E(p)$ not containing $-I$, for non-CM elliptic curves $E/\Q$}\label{table_mod_single}
\end{center}
\begin{center}
\begin{tabular}{|rl|}
\hline
$\mathcal{E}_{3,1}\,:\,y^2$&$= x^3 -3(t+1)(t+3)(t^2+3) x -2(t^2-3)(t^4+6t^3+18t^2+18t+9)$\\
$\mathcal{E}_{3,3}\,:\, y^2$&$= x^3 -3(t+1)^3(t+9)x -2(t+1)^4(t^2-18t-27)$\\
\hline
$\mathcal{E}_{5,1}\,:\, y^2$& $=x^3 -27(t^{20} + 228t^{15} + 494t^{10} - 228t^5 + 1)x +54(t^{30} - 522t^{25} - 10005t^{20} - 10005t^{10} + 522t^5 + 1)$\\
$\mathcal{E}_{5,5}\,:\, y^2$&$=x^3-27(t^4 + 228t^3 + 494t^2 - 228t + 1)x+54(t^6 - 522t^5 - 10005t^4 - 10005t^2 + 522t + 1)$\\
$\mathcal{E}_{5,6}\,:\, y^2$& $= x^3-27(t^4 - 12t^3 + 14t^2 + 12t + 1)x + 54(t^6-18t^5+75t^4+75t^2+18t+1)$\\
\hline
$\mathcal{E}_{7,1}\,:\, y^2$&$= x^3 -5^3  7^3 x - 5^4  7^2 106$\\
$\mathcal{E}_{7,3}\,:\, y^2$&$=x^3 - 27(t^2 - t + 1)(t^6 - 11t^5 + 30t^4 - 15t^3 - 10t^2 + 5t + 1) x+54Q_{7,3}(t)$\\
$\mathcal{E}_{7,4}\,:\, y^2$&$=x^3 -27(t^2-t+1)(t^6+229t^5+270t^4-1695t^3+1430t^2-235t+1) x +54Q_{7,4}(t)$\\
$\mathcal{E}_{7,5}\,:\, y^2$&$= x^3 -   27\cdot 7 (t^2 - 3t - 3)(t^2 - t + 1)(3t^2 - 9t + 5)(5t^2 - t - 1) x -54\cdot 7^2 Q_{7,5}(t)$\\
$\mathcal{E}_{7,7}\,:\, y^2$& $=x^3 - 27(t^2 + 13t + 49)^3(t^2 + 245t + 2401)x +54(t^2 + 13t + 49)^4(t^4 - 490t^3 - 21609t^2 - 235298t - 823543)$\\
\hline
$\mathcal{E}_{11,1}\,:\,  y^2 $&$ = x^3-27\cdot 11^4 x + 54\cdot 11^5\cdot 43$\\
$\mathcal{E}_{11,2}\,:\,  y^2 $& $=x^3-27\cdot 11^3\cdot 131 x +54\cdot 11^4\cdot 4973$\\
\hline
$\mathcal{E}_{13,4}\,:\, y^2 $& $= x^3-27(t^4 - t^3 + 5t^2 + t + 1)^3P_{13,4}(t) x + 54(t^2 + 1) (t^4 - t^3 + 5 t^2 + t + 1)^4 Q_{13,4}(t)$\\
$\mathcal{E}_{13,5}\,:\, y^2 $& $= x^3 -27(t^4 - t^3 + 5t^2 + t + 1)^3P_{13,5}(t) x + 54(t^2 + 1) (t^4 - t^3 + 5 t^2 + t + 1)^4 Q_{13,5}(t)$\\
\hline
\end{tabular}
\captionof{table}{Elliptic curve model for the fine moduli}\label{Eq_finemoduli}
\end{center}

}


{
\extrarowheight = -0.5ex
\renewcommand{\arraystretch}{2}
\begin{center}
\begin{tabular}{|c|c|c|c|c|}
\hline
Type & level & generators & $j$-map & Example\\
\hline
\texttt{4X3} & 4 &
$\smallmat{3}{3}{0}{1}, \smallmat{0}{1}{3}{1}$
& $-t^2+1728$ & \href{http://www.lmfdb.org/EllipticCurve/Q/567a1}{\texttt{567a1}}\\
\hline
\texttt{4X7} & 4 &  
$\smallmat{0}{1}{3}{0}, \smallmat{0}{1}{1}{1}$
& $-4t^3(t+8)$ & \href{http://www.lmfdb.org/EllipticCurve/Q/216a1}{\texttt{216a1}}\\
\hline\texttt{8X4} & 8 &
$\smallmat{7}{7}{0}{1}, \smallmat{5}{0}{1}{1}$
& $-2t^2+1728$ & \href{http://www.lmfdb.org/EllipticCurve/Q/216a1}{\texttt{216a1}}\\
\hline
\texttt{8X5} & 8 & 
$\smallmat{5}{5}{0}{1}, \smallmat{0}{1}{1}{1}$
& $2t^2+1728$ &  \href{http://www.lmfdb.org/EllipticCurve/Q/1682b1}{\texttt{1682b1}}\\
\hline
\texttt{9XE} & 9 &
$\smallmat{4}{5}{4}{4}, \smallmat{4}{8}{8}{6}$
& $\frac{3^7 (t^2-1)^3 (t^6+3t^5+6t^4+t^3-3t^2+12t+16)^3 (2t^3+3t^2-3t-5)}{(t^3-3t-1)^9}$ &  \href{http://www.lmfdb.org/EllipticCurve/Q/1944c1}{\texttt{1944c1}}\\
\hline
\end{tabular}
\captionof{table}{Maximal groups $G^\infty_E(p)$, for non-CM elliptic curves $E/\Q$ and $p=2,3$}\label{tab:adic_groups}
\end{center}
}

\begin{center}
\begin{tabular}{|c|c|c|}
\hline
Type & $j$-invariants & Examples \\
\hline
\texttt{[4X3,11B.10.4]} & $-11^2$ &   \href{http://www.lmfdb.org/EllipticCurve/Q/1089h1}{\texttt{1089h1}}  \\ 
\hline
\texttt{[4X3,11B.10.5]} &  $-11\cdot 131^3$ &   \href{http://www.lmfdb.org/EllipticCurve/Q/1089h2}{\texttt{1089h2}} \\ 
\hline
\texttt{[8X5,7Ns.3.1]} & $3^3\cdot 5\cdot 7^5/2^7$ &   \href{http://www.lmfdb.org/EllipticCurve/Q/2450a1}{\texttt{2450a1}} \\ 
\hline
\end{tabular}
\captionof{table}{Modular curves: isolated point}\label{tab:jconstant}
\end{center}

\begin{center}
\begin{tabular}{|c|c|c|}
\hline 
Type & $j$-maps & Example\\
\hline
\texttt{[2B,3B]} & $\frac{(11 t-8)^3 \left(1259 t^3-2856 t^2+2112 t-512\right)^3}{2 (t-1) t^6 (3 t-2)^3 (25 t-16)^2}$ &  \href{http://www.lmfdb.org/EllipticCurve/Q/80b1}{\texttt{80b1}}  \\ 
\hline
\texttt{[2B,3Cs]} & $-\frac{\left(54 t^3-1\right)^3 \left(54 t^3-54 t^2-1\right)^3 \left(2916 t^6+2916 t^5+2916 t^4-108 t^3-54 t^2+1\right)^3}{729 t^6 (3 t+1)^6 (6 t-1)^3 \left(9 t^2-3 t+1\right)^6 \left(36 t^2+6 t+1\right)^3}$ & \href{http://www.lmfdb.org/EllipticCurve/Q/98a3}{\texttt{98a3}}  \\ 
\hline
\texttt{[2B,3Nn]} & $\frac{\left(t^3+16\right)^3}{t^3}$ & \href{http://www.lmfdb.org/EllipticCurve/Q/1568d1}{\texttt{1568d1}}  \\ 
\hline
\texttt{[2B,3Ns]} & $\frac{\left(1024 t^3+1920 t^2+768 t+115\right)^3 \left(1024 t^3+1920 t^2+1200 t+223\right)^3}{46656 (t+1)^3 (4 t+1)^3 (16 t+7)^6}$ & \href{http://www.lmfdb.org/EllipticCurve/Q/726a1}{\texttt{726a1}}  \\ 
\hline
\texttt{[2B,5B]} & $-\frac{\left(5 t^6-2080 t^5+81920 t^4-1310720 t^3+10485760 t^2-41943040 t+67108864\right)^3}{8 (t-8)^5 t^{10} (5 t-32)^2}$ & \href{http://www.lmfdb.org/EllipticCurve/Q/768b1}{\texttt{768b1}}  \\ 
\hline
 \texttt{[2B,5B.4.1]} & $\frac{P_1(t)^3}{590490000000000 t^{10} (3788 t+1)^5 (3818 t+1)^{10} (3848 t+1)^5 \left(14118064 t^2+7516 t+1\right) \left(14690764 t^2+7666 t+1\right)^2}$  & \href{http://www.lmfdb.org/EllipticCurve/Q/198e1}{\texttt{198e1}}  \\ 
\hline
\texttt{[2B,5B.4.2]} & $-\frac{P_2(t)^3}{4 t^2 (2 t-1)^2 (4 t-1) \left(4 t^2+2 t-1\right)^{10} \left(16 t^2-12 t+1\right)^5}$ & \href{http://www.lmfdb.org/EllipticCurve/Q/198e3}{\texttt{198e3}}  \\ 
\hline
\hline
\texttt{[2Cn,3B]} & $\frac{9 \left(t^2+3\right) \left(t^2+27\right)^3}{t^6}$ & \href{http://www.lmfdb.org/EllipticCurve/Q/196a1}{\texttt{196a1}}  \\ 
\hline
\texttt{[2Cn,5S4]} & $\frac{\left(3 t^2+1\right)^3 \left(64 t^4+11 t^2+1\right)}{t^{10}}$ & \href{http://www.lmfdb.org/EllipticCurve/Q/1444a1}{\texttt{1444a1}}  \\ 
\hline
\texttt{[2Cn,7B]} & $\frac{\left(7 t^2-t+1\right) \left(7 t^2+t+1\right) \left(2401 t^4+245 t^2+1\right)^3}{t^2}$ & \href{http://www.lmfdb.org/EllipticCurve/Q/1922e1}{\texttt{1922e1}}  \\ 
\hline
\hline
\texttt{[2Cs,3B]} & $ \frac{\left(7 t^2+6 t+3\right)^3 \left(127 t^6+738 t^5+1605 t^4+1260 t^3+345 t^2+18 t+3\right)^3}{4 (t-1)^6 t^2 (t+1)^6 (t+3)^2 (3 t+1)^6 (5 t+3)^2}$  & \href{http://www.lmfdb.org/EllipticCurve/Q/150c2}{\texttt{150c2}}  \\ 
\hline
\hline
\texttt{[3Nn,5B]} & $ \frac{\left(3125 t^6+250 t^3+1\right)^3}{t^3}$ & \href{http://www.lmfdb.org/EllipticCurve/Q/1369e1}{\texttt{1369e1}}\\
\hline
\hline
\texttt{[4X3,3B]} & $-\frac{9 \left(t^2-2 t-26\right)^3 \left(t^2-2 t-2\right)}{(t-1)^6}$ & \href{http://www.lmfdb.org/EllipticCurve/Q/242a1}{\texttt{242a1}} \\
\hline
\texttt{[4X3,5S4]} &$\frac{\left(3 t^2-1\right)^3 \left(64 t^4-11 t^2+1\right)}{t^{10}}$ & \href{http://www.lmfdb.org/EllipticCurve/Q/324b1}{\texttt{324b1}} \\
\hline
\texttt{[4X3,7B]} & $-\frac{\left(t^4-245 t^2+2401\right)^3 \left(t^4-13 t^2+49\right)}{t^{14}}$& \href{http://www.lmfdb.org/EllipticCurve/Q/1369c2}{\texttt{1369c2}} \\
\hline
\hline
\texttt{[4X7,3Nn]} & $-\frac{\left(32 t^3+1\right)^3}{64 t^{12}}$ & \href{http://www.lmfdb.org/EllipticCurve/Q/80802b1}{\texttt{80802b1}} \\
\hline
\hline
\texttt{[8X4,3B]} & $-\frac{9 \left(t^2-2 t-53\right)^3 \left(t^2-2 t-5\right)}{2 (t-1)^6}$ & \href{http://www.lmfdb.org/EllipticCurve/Q/1296k2}{\texttt{1296k2}} \\
\hline
\texttt{[8X4,5S4]} &$\frac{2 \left(3 t^2-2\right)^3 \left(32 t^4-11 t^2+2\right)}{t^{10}}$ & \href{http://www.lmfdb.org/EllipticCurve/Q/4232d1}{\texttt{4232d1}} \\
\hline
\texttt{[8X4,7B]} &$-\frac{\left(49 t^4-26 t^2+4\right) \left(2401 t^4-490 t^2+4\right)^3}{128 t^2}$ & \href{http://www.lmfdb.org/EllipticCurve/Q/162c3}{\texttt{162c3}} \\
\hline
\hline
\texttt{[8X5,3B]} & $\frac{9 \left(t^2+6\right) \left(t^2+54\right)^3}{2 t^6}$& \href{http://www.lmfdb.org/EllipticCurve/Q/7938d1}{\texttt{7938d1}} \\
\hline
\texttt{[8X5,5S4]} & $\frac{2 \left(3 t^2+2\right)^3 \left(32 t^4+11 t^2+2\right)}{t^{10}}$& \href{http://www.lmfdb.org/EllipticCurve/Q/16200e1}{\texttt{16200e1}} \\
\hline
\texttt{[8X5,7B]} & $\frac{\left(49 t^4+26 t^2+4\right) \left(2401 t^4+490 t^2+4\right)^3}{128 t^2}$& \href{http://www.lmfdb.org/EllipticCurve/Q/12482f2}{\texttt{12482f2}}\\
\hline
\end{tabular}
\captionof{table}{Modular curves of genus 0}\label{jmapsj0coarse}
\end{center}

\begin{center}
\begin{tabular}{|c|c|c|c|}
\hline 
Type & Model & Parametrization & Example\\
\hline
  \texttt{[2B,3B.1.1]} & $\mathcal{E}_{3,3}$ & \multirow{ 2}{*}{$\frac{512 (t-1) (3 t-2)^3}{t^3 (25 t-16)}$} &  \href{http://www.lmfdb.org/EllipticCurve/Q/14a4}{\texttt{14a4}}  \\ 
\cline{1-2}\cline{4-4}
  \texttt{[2B,3B.1.2]} & $\mathcal{E}^{-3}_{3,3}$ & &  \href{http://www.lmfdb.org/EllipticCurve/Q/14a3}{\texttt{14a3}}  \\ 
\hline
  \texttt{[2B,3Cs.1.1]} &$\mathcal{E}_{3,1}$ or $\mathcal{E}^{-3}_{3,1}$ &$-\frac{(6 t-1)^3 \left(36 t^2+6 t+1\right)^3}{432 t^3 (3 t+1)^3 \left(9 t^2-3 t+1\right)^3}$ & \href{http://www.lmfdb.org/EllipticCurve/Q/14a1}{\texttt{14a1}}  \\ 
\hline
 \texttt{[2B,5B.1.1]} & $\mathcal{E}_{5,6}$ & \multirow{ 2}{*}{$\frac{(3788 t+1)^5 (3848 t+1)^5 \left(14118064 t^2+7516 t+1\right)}{388800000 t^5 (3818 t+1)^5 \left(14690764 t^2+7666 t+1\right)}$} & \href{http://www.lmfdb.org/EllipticCurve/Q/66c1}{\texttt{66c1}}  \\ 
\cline{1-2}\cline{4-4}
 \texttt{[2B,5B.1.4]} & $\mathcal{E}^{5}_{5,6}$ & & \href{http://www.lmfdb.org/EllipticCurve/Q/150b3}{\texttt{150b3}}  \\ 
\hline
 \texttt{[2B,5B.1.2]} &$\mathcal{E}_{5,5}$ &\multirow{ 2}{*}{$-\frac{(4 t-1) \left(16 t^2-12 t+1\right)^5}{32 t (2 t-1) \left(4 t^2+2 t-1\right)^5}$}  & \href{http://www.lmfdb.org/EllipticCurve/Q/66c3}{\texttt{66c3}}  \\ 
\cline{1-2}\cline{4-4}
 \texttt{[2B,5B.1.3]} &$\mathcal{E}^{5}_{5,5}$ & & \href{http://www.lmfdb.org/EllipticCurve/Q/150b1}{\texttt{150b1}}  \\ 
\hline
 \texttt{[2Cn,3B.1.1]} &$\mathcal{E}_{3,3}$ &\multirow{ 2}{*}{$\frac{3 \left(t^4-54 t^2-243\right)}{t^3}$} & \href{http://www.lmfdb.org/EllipticCurve/Q/196b1}{\texttt{196b1}}  \\ 
\cline{1-2}\cline{4-4}
 \texttt{[2Cn,3B.1.2]} &$\mathcal{E}^{-3}_{3,3}$ & & \href{http://www.lmfdb.org/EllipticCurve/Q/196b2}{\texttt{196b2}}  \\ 
\hline
 \texttt{[2Cn,7B.2.1]} &$\mathcal{E}^{-7}_{7,7}$ & \multirow{ 2}{*}{$-\frac{823543 t^8+235298 t^6+21609 t^4+490 t^2-1}{t}$}& \href{http://www.lmfdb.org/EllipticCurve/Q/1922c1}{\texttt{1922c1}}  \\ 
\cline{1-2}\cline{4-4}
 \texttt{[2Cn,7B.2.3]} &$\mathcal{E}_{7,7}$ & & \href{http://www.lmfdb.org/EllipticCurve/Q/1922c2}{\texttt{1922c2}}  \\ 
\hline
 \texttt{[2Cs,3B.1.1]} &$\mathcal{E}_{3,3}$ & \multirow{ 2}{*}{$-\frac{(t+3) (3 t+1)^3}{32 t (t+1)^3}$}& \href{http://www.lmfdb.org/EllipticCurve/Q/30a2}{\texttt{30a2}}  \\ 
\cline{1-2}\cline{4-4}
 \texttt{[2Cs,3B.1.2]} &$\mathcal{E}^{-3}_{3,3}$ & & \href{http://www.lmfdb.org/EllipticCurve/Q/30a6}{\texttt{30a6}}  \\ 
\hline
\end{tabular}
\captionof{table}{Modular curves of genus 0: Fine Moduli Spaces}\label{jmapsj0fine}
\end{center}


{\scriptsize
\begin{center}
\begin{tabular}{ccc}
\begin{tabular}{|c|c|c|}
\hline
Type & $\mathcal E/\Q$ & $\mathcal E(\Q)$ \\ 
\hline
\texttt{[2B,5Nn]} & \href{http://www.lmfdb.org/EllipticCurve/Q/50b1}{\texttt{50b1}} & $\Z/5\Z$\\ \hline
\texttt{[2B,5Ns]} & \href{http://www.lmfdb.org/EllipticCurve/Q/50b1}{\texttt{50b1}} & $\Z/5\Z$\\ \hline
\texttt{[2B,5S4]} & \href{http://www.lmfdb.org/EllipticCurve/Q/50b2}{\texttt{50b2}} & $\Z/5\Z$\\ \hline
\texttt{[2Cn,13B]} & \href{http://www.lmfdb.org/EllipticCurve/Q/52a1}{\texttt{52a1}} & $\Z/2\Z$\\ \hline
\texttt{[2Cn,13B.4.1]} & \href{http://www.lmfdb.org/EllipticCurve/Q/52a2}{\texttt{52a2}} & $\Z/2\Z$\\ \hline
\texttt{[2Cn,13B.4.2]} & \href{http://www.lmfdb.org/EllipticCurve/Q/52a2}{\texttt{52a2}} & $\Z/2\Z$\\ \hline
\texttt{[2Cn,3Cs]} & \href{http://www.lmfdb.org/EllipticCurve/Q/36a3}{\texttt{36a3}} & $\Z/2\Z$\\ \hline
\texttt{[2Cn,3Nn]} & \href{http://www.lmfdb.org/EllipticCurve/Q/36a3}{\texttt{36a3}} & $\Z/2\Z$\\ \hline
\texttt{[2Cn,3Ns]} & \href{http://www.lmfdb.org/EllipticCurve/Q/36a4}{\texttt{36a4}} & $\Z/2\Z$\\ \hline
\texttt{[2Cn,5B]} & \href{http://www.lmfdb.org/EllipticCurve/Q/20a3}{\texttt{20a3}} & $\Z/2\Z$\\ \hline
\texttt{[2Cn,5B.4.1]} & \href{http://www.lmfdb.org/EllipticCurve/Q/20a4}{\texttt{20a4}} & $\Z/2\Z$\\ \hline
\texttt{[2Cn,5B.4.2]} & \href{http://www.lmfdb.org/EllipticCurve/Q/20a4}{\texttt{20a4}} & $\Z/2\Z$\\ \hline
\texttt{[2Cn,5Nn]} & \multicolumn{2}{c|}{Non Elliptic: $\mathcal E(\Q_5)=\emptyset$}  \\ \hline
\end{tabular}
&
\begin{tabular}{|c|c|c|}
\hline
Type & $\mathcal E/\Q$ & $\mathcal E(\Q)$ \\ 
\hline
\texttt{[2Cs,3Nn]} & \href{http://www.lmfdb.org/EllipticCurve/Q/36a1}{\texttt{36a1}} & $\Z/6\Z$\\ \hline
\texttt{[2Cs,3Ns]} & \href{http://www.lmfdb.org/EllipticCurve/Q/36a1}{\texttt{36a1}} & $\Z/6\Z$\\ \hline 
\texttt{[3B,5B]} & \href{http://www.lmfdb.org/EllipticCurve/Q/15a1}{\texttt{15a1}} & $\Z/2\Z\oplus \Z/4\Z$\\ \hline
\texttt{[3B,5B.4.1]} & \href{http://www.lmfdb.org/EllipticCurve/Q/15a3}{\texttt{15a3}} & $\Z/2\Z\oplus \Z/4\Z$\\ \hline
\texttt{[3B,5B.4.2]} & \href{http://www.lmfdb.org/EllipticCurve/Q/15a3}{\texttt{15a3}} & $\Z/2\Z\oplus \Z/4\Z$\\ \hline
\texttt{[3B,5S4]} & \href{http://www.lmfdb.org/EllipticCurve/Q/75c1}{\texttt{75c1}} & $\Z/5\Z$\\ \hline
\texttt{[3B,7B]} & \href{http://www.lmfdb.org/EllipticCurve/Q/21a1}{\texttt{21a1}} & $\Z/2\Z\oplus \Z/4\Z$\\ \hline
\texttt{[3Nn,5Nn]} & \href{http://www.lmfdb.org/EllipticCurve/Q/225a1}{\texttt{225a1}} & $\Z$\\ \hline
\texttt{[3Nn,5Ns]} & \href{http://www.lmfdb.org/EllipticCurve/Q/225a1}{\texttt{225a1}} & $\Z$\\ \hline
\texttt{[3Nn,5S4]} & \href{http://www.lmfdb.org/EllipticCurve/Q/225a2}{\texttt{225a2}} & $\Z$\\ \hline
\texttt{[3Nn,7Nn]} & \href{http://www.lmfdb.org/EllipticCurve/Q/441b1}{\texttt{441b1}} & $\Z/3\Z\oplus \Z$\\ \hline
\texttt{[3Ns,5B]} & \href{http://www.lmfdb.org/EllipticCurve/Q/15a3}{\texttt{15a3}} & $\Z/2\Z\oplus \Z/4\Z$\\ \hline
\end{tabular}
&
\begin{tabular}{|c|c|c|}
\hline
Type & $\mathcal E/\Q$ & $\mathcal E(\Q)$ \\ 
\hline
\texttt{[4X3,3Nn]} & \href{http://www.lmfdb.org/EllipticCurve/Q/144a3}{\texttt{144a3}} & $\Z/2\Z$\\ \hline
\texttt{[4X3,13B]} & \href{http://www.lmfdb.org/EllipticCurve/Q/208c1}{\texttt{208c1}} & $\Z/2\Z$\\ \hline
\texttt{[4X3,5B]} & \href{http://www.lmfdb.org/EllipticCurve/Q/80b3}{\texttt{80b3}} & $\Z/2\Z$\\ \hline
\texttt{[4X3,5Nn]} &  \multicolumn{2}{c|}{Non Elliptic: $\mathcal E(\Q_5)=\emptyset$}  \\ \hline
\texttt{[4X7,3B]} & \href{http://www.lmfdb.org/EllipticCurve/Q/48a6}{\texttt{48a6}} & $\Z/8\Z$\\ \hline
\texttt{[4X7,5B]} & \href{http://www.lmfdb.org/EllipticCurve/Q/80a4}{\texttt{80a4}} & $\Z/4\Z$\\ \hline
\texttt{[4X7,5S4]} & \href{http://www.lmfdb.org/EllipticCurve/Q/400h1}{\texttt{400h1}} & $\Z$\\ \hline
\texttt{[8X4,3Nn]} & \href{http://www.lmfdb.org/EllipticCurve/Q/576e3}{\texttt{576e3}} & $\Z/2\Z$\\ \hline
\texttt{[8X4,5B]} & \href{http://www.lmfdb.org/EllipticCurve/Q/320f4}{\texttt{320f4}} & $\Z/2\Z\oplus \Z$\\ \hline
\texttt{[8X4,5Nn]}  & \multicolumn{2}{c|}{Non Elliptic: $\mathcal E(\Q_2)=\emptyset$}  \\ \hline
\texttt{[8X4,13B]} & \href{http://www.lmfdb.org/EllipticCurve/Q/832h2}{\texttt{832h2}} & $\Z/2\Z\oplus \Z$\\ \hline
\texttt{[8X5,3Nn]} & \href{http://www.lmfdb.org/EllipticCurve/Q/576a3}{\texttt{576a3}} & $\Z/2\Z\oplus \Z$\\ \hline
\texttt{[8X5,5B]} & \href{http://www.lmfdb.org/EllipticCurve/Q/320c4}{\texttt{320c4}} & $\Z/2\Z$\\ \hline
\texttt{[8X5,5Nn]} & \href{http://www.lmfdb.org/EllipticCurve/Q/1600g2}{\texttt{1600g2}} & $\Z/2\Z\oplus \Z$\\ \hline
\texttt{[8X5,13B]} & \href{http://www.lmfdb.org/EllipticCurve/Q/832d2}{\texttt{832d2}} & $\Z/2\Z$\\ 
\hline 
\end{tabular}
\end{tabular}
\captionof{table}{Modular curves of genus 1}\label{tab:gen1nonCM}
\end{center}
}

\begin{center}
\begin{tabular}{|c|c|c|c|}
\hline 
Type & $j$-maps & $\mathcal E/\Q$ & Example \\
\hline
\texttt{[3Nn,5Nn]} & $\frac{125 \left(x y+4 x-y^2+4\right) \left(x y-12 x+2 y^2-8 y-12\right)^3 F_1(x,y)^3}{F_2(x,y)^5}$  &  \href{http://www.lmfdb.org/EllipticCurve/Q/225a1}{\texttt{225a1}} & Table \ref{ex3Nn}  \\
\hline
 \texttt{[3Nn,5Ns]}  & $-\frac{(y-2)^3 \left(y^2+y+4\right)^3 \left(y^2+6 y+4\right)^3}{\left(y^2+y-1\right)^5}$  &  \href{http://www.lmfdb.org/EllipticCurve/Q/225a1}{\texttt{225a1}} &Table \ref{ex3Nn}  \\
\hline
\texttt{[3Nn,7Nn]} & $\frac{\left(7 x y+28 x-2 y^2-30 y+59\right)^3 F_3(x,y)^3 F_4(x,y)^3 F_5(x,y)^3}{F_6(x,y)^7}$ &  \href{http://www.lmfdb.org/EllipticCurve/Q/441b1}{\texttt{441b1}} & Table \ref{ex3Nn} \\
\hline
\hline
\texttt{[4X7,5S4]} & $\frac{G_1(x,y)}{1024}$ & \href{http://www.lmfdb.org/EllipticCurve/Q/400h1}{\texttt{400h1}}  & \href{http://www.lmfdb.org/EllipticCurve/Q/12996c1}{\texttt{12996c1}} \\
\hline
\texttt{[8X4,5B]} & $-\frac{\left(x^2-470 x+5225\right)^3}{2 (x+15)^5}$ &  \href{http://www.lmfdb.org/EllipticCurve/Q/320f4}{\texttt{320f4}} & \href{http://www.lmfdb.org/EllipticCurve/Q/6400b1}{\texttt{6400b1}}\\
\hline
\texttt{[8X4,13B]}  & $-\frac{\left(x^2-2 x+28\right) \left(x^4-478 x^3+7688 x^2-38328 x+149808\right)^3}{2 (x+4)^{13}}$ &\href{http://www.lmfdb.org/EllipticCurve/Q/832h2}{\texttt{832h2}} & \href{http://www.lmfdb.org/EllipticCurve/Q/20736c1}{\texttt{20736c1}}\\
\hline
\texttt{[8X5,5Nn]} & $\frac{8000 G_2(x,y)}{\left(x^2-86 x-151\right)^{10}}$ & \href{http://www.lmfdb.org/EllipticCurve/Q/1600g2}{\texttt{1600g2}}&  \href{http://www.lmfdb.org/EllipticCurve/Q/313600bz1}{\texttt{313600bz1}}\\
\hline
\end{tabular}
\captionof{table}{Modular elliptic curve with positive rank}\label{tab:gen1PosRank}

\begin{tabular}{|c|c|c|}
\hline 
Type & Example & $\mathcal{N}_{\Q}(E)$  \\
\hline
\texttt{[3Nn,5Nn]} & $y^2 = x^3 + x^2 + 1218089157x + 10584902461321$  & 50840066816  \\
\hline
\texttt{[3Nn,5Ns]}  & $y^2 = x^3 - 1419330328x + 20580980954064$  &  1774432 \\
\hline
 \texttt{[3Nn,7Nn]} & $y^2 = x^3 - x^2 - 439163751869x + 112018153929262517$ &  1541679392 \\
\hline
\end{tabular}
\captionof{table}{Examples out of LMFDB: Genus $X_G =1$}\label{ex3Nn}
\end{center}

\begin{center}
\begin{tabular}{|c|c|c|c|}
\hline
Type & \multicolumn{2}{|c|}{$j$-invariants} & Examples \\
\hline
 \multirow{2}{*}{\texttt{[3B,5B]}} & \texttt{[3B,5B.4.1]} &   $\{-5\cdot 29^3/2^5,\,5\cdot 211^3/2^{15}\}$ &  \href{http://www.lmfdb.org/EllipticCurve/Q/400c3}{\texttt{400c3}},\, \href{http://www.lmfdb.org/EllipticCurve/Q/400c4}{\texttt{400c4}}  \\ 
\cline{2-4}
 & \texttt{[3B,5B.4.2]} & $\{-5^2/2,\,-5^2\cdot 241^3/2^3\}$   & \href{http://www.lmfdb.org/EllipticCurve/Q/400c1}{\texttt{400c1}} \,, \href{http://www.lmfdb.org/EllipticCurve/Q/400c2}{\texttt{400c2}}  \\ 
\hline
\texttt{[3B,5S4]} &\multicolumn{2}{|c|}{$ \{-2^4\cdot 3^2\cdot 13^3,\,2^4\cdot 3^3\}$} & \href{http://www.lmfdb.org/EllipticCurve/Q/1296i1}{\texttt{1296i1}},\,\href{http://www.lmfdb.org/EllipticCurve/Q/1296i2}{\texttt{1296i2}}  \\ 
\hline
\texttt{[3B,7B]} & \multicolumn{2}{|c|}{$\{-3^2\cdot  5^3\cdot  101^3/2^{21},\,-3^3\cdot  5^3\cdot  383^3/2^7,\,3^3\cdot   5^3/2,\,-3^2\cdot  5^6/2^3\}$} & \href{http://www.lmfdb.org/EllipticCurve/Q/1296k4}{\texttt{1296k4}},\,\href{http://www.lmfdb.org/EllipticCurve/Q/1296k3}{\texttt{1296k3}},\,\href{http://www.lmfdb.org/EllipticCurve/Q/1296k1}{\texttt{1296k1}},\,\href{http://www.lmfdb.org/EllipticCurve/Q/1296k2}{\texttt{1296k2}}  \\ 
\hline
\hline
\texttt{[3Ns,5B]} & \multicolumn{2}{|c|}{$\{11^3/2^3,\,-29^3\cdot 41^3/2^{15}\}$} & \href{http://www.lmfdb.org/EllipticCurve/Q/338e1}{\texttt{338e1}} ,\, \href{http://www.lmfdb.org/EllipticCurve/Q/338e2}{\texttt{338e2}} \\
\hline\hline
\texttt{[4X7,5B]} &  \multicolumn{2}{|c|}{$\{-5^2\cdot 41^3/2^2,\,5\cdot 59^3/2^{10}\}$}  &  \href{http://www.lmfdb.org/EllipticCurve/Q/14450bj1}{\texttt{14450bj1}},\, \href{http://www.lmfdb.org/EllipticCurve/Q/14450d1}{\texttt{14450d1}}  \\
\hline
\end{tabular}
\captionof{table}{Modular elliptic curves with rank 0: Coarse moduli}\label{tab:gen1r0}
\end{center}

{
\scriptsize
\begin{center}
\begin{tabular}{ccc}
\begin{tabular}{|c|c|}
\hline
Type  & All curves \\
\hline
\texttt{[3B,5B.1.1]} & \href{http://www.lmfdb.org/EllipticCurve/Q/50b1}{\texttt{50b1}},\href{http://www.lmfdb.org/EllipticCurve/Q/50b2}{\texttt{50b2} }  \\ 
\hline
\texttt{[3B,5B.1.2]} & \href{http://www.lmfdb.org/EllipticCurve/Q/50b3}{\texttt{50b3}},\href{http://www.lmfdb.org/EllipticCurve/Q/50b4}{\texttt{50b4}} \\ 
\hline
\texttt{[3B,7B.2.1]} & \href{http://www.lmfdb.org/EllipticCurve/Q/7938u3}{\texttt{7938u3}}, \href{http://www.lmfdb.org/EllipticCurve/Q/7938u4}{\texttt{7938u4}}  \\ 
\hline
\texttt{[3B,7B.2.3]} & \href{http://www.lmfdb.org/EllipticCurve/Q/7938u1}{\texttt{7938u1}}, \href{http://www.lmfdb.org/EllipticCurve/Q/7938u2}{\texttt{7938u2}}  \\ 
\hline
\end{tabular}
&
\begin{tabular}{|c|c|}
\hline
Type  & All curves \\
\hline
\texttt{[3B.1.1,5B.1.3]} & \href{http://www.lmfdb.org/EllipticCurve/Q/50a1}{\texttt{50a1}} \\ 
\hline
\texttt{[3B.1.1,5B.1.4]} & \href{http://www.lmfdb.org/EllipticCurve/Q/50a3}{\texttt{50a3}}  \\ 
\hline
\texttt{[3B.1.1,5B.4.1]} & \href{http://www.lmfdb.org/EllipticCurve/Q/450b4}{\texttt{450b4}} \\ 
\hline
\texttt{[3B.1.1,5B.4.2]} &\href{http://www.lmfdb.org/EllipticCurve/Q/450b2}{\texttt{450b2}}  \\ 
\hline
\texttt{[3B.1.1,5S4]} & \href{http://www.lmfdb.org/EllipticCurve/Q/324b1}{\texttt{324b1}},\href{http://www.lmfdb.org/EllipticCurve/Q/324d1}{\texttt{324d1}} \\ 
\hline
\texttt{[3B.1.1,7B]} &\href{http://www.lmfdb.org/EllipticCurve/Q/162c1}{\texttt{162c1}},\href{http://www.lmfdb.org/EllipticCurve/Q/162c3}{\texttt{162c3}}  \\ 
\hline
\texttt{[3B.1.1,7B.2.1]} & \href{http://www.lmfdb.org/EllipticCurve/Q/162b1}{\texttt{162b1}} \\ 
\hline
\texttt{[3B.1.1,7B.2.3]} & \href{http://www.lmfdb.org/EllipticCurve/Q/162b3}{\texttt{162b3}} \\ 
\hline
\end{tabular}
&
\begin{tabular}{|c|c|}
\hline
Type  & All curves \\
\hline
\texttt{[3B.1.2,5B.1.3]} & \href{http://www.lmfdb.org/EllipticCurve/Q/50a2}{\texttt{50a2}} \\ 
\hline
\texttt{[3B.1.2,5B.1.4]} & \href{http://www.lmfdb.org/EllipticCurve/Q/50a4}{\texttt{50a4}} \\ 
\hline
\texttt{[3B.1.2,5B.4.1]} & \href{http://www.lmfdb.org/EllipticCurve/Q/450b3}{\texttt{450b3}} \\ 
\hline
\texttt{[3B.1.2,5B.4.2]} & \href{http://www.lmfdb.org/EllipticCurve/Q/450b1}{\texttt{450b1}} \\ 
\hline
\texttt{[3B.1.2,5S4]} & \href{http://www.lmfdb.org/EllipticCurve/Q/324b2}{\texttt{324b2}},\href{http://www.lmfdb.org/EllipticCurve/Q/324d2}{\texttt{324d2}} \\ 
\hline
\texttt{[3B.1.2,7B]} & \href{http://www.lmfdb.org/EllipticCurve/Q/162c2}{\texttt{162c2}}, \href{http://www.lmfdb.org/EllipticCurve/Q/162c4}{\texttt{162c4}} \\ 
\hline
\texttt{[3B.1.2,7B.2.1]} & \href{http://www.lmfdb.org/EllipticCurve/Q/162b2}{\texttt{162b2}} \\ 
\hline
\texttt{[3B.1.2,7B.2.3]} &  \href{http://www.lmfdb.org/EllipticCurve/Q/162b4}{\texttt{162b4}} \\ 
\hline
\end{tabular}
\end{tabular}
\captionof{table}{Modular elliptic curves with rank 0: Fine moduli}\label{tab:gen1r0fine}
\end{center}
}


{\small
\begin{center}
\begin{tabular}{|c|c|c|c|}
\hline
Type & Rank & Non-cuspidal points & $j$-invariants \\
\hline
\texttt{[2B,5Ns.2.1]} & 0 & $\emptyset$ & $-$ \\ 
 \hline
\texttt{[2B,7Nn]} & 0  &  $( -4, 3 ),( 1/2, 3 )$ & $2^6 3^3$ \\ 
& & $( -1, 0 )$ & 0 \\ 
& & $( 32, -1 )$ & $2^3 3^3 11^3$ \\ 
 \hline
\texttt{[2Cn,5Ns]} & 0  &  $( 0, -2 )$ & $2^6 3^3$ \\ 
 \hline
\texttt{[2Cn,7B.6.1]} & 0 & $\emptyset$ & $-$ \\ 
 \hline
\texttt{[2Cn,7B.6.2]} & 0 & $\emptyset$ & $-$ \\ 
 \hline
\texttt{[2Cn,7B.6.3]} & 0 & $\emptyset$ & $-$ \\ 
 \hline
\texttt{[2Cs,5S4]} & 0  &  $( 1, 3 ),( -2, 3 ),( -1/2, 3 )$ & $2^6 3^3$ \\ 
 \hline
\texttt{[3B,5Nn]} & 0  &  $( 1, 1 )$ & $2^4 3^3 5^3$ \\ 
& & $( -1/9, 1/2 )$ & $-2^{15} 3\, 5^3$ \\ 
& & $( -9, -1/2 ),( -9, -1 )$ & 0 \\ 
& & $( -1, -1/2 ),( -1, -1 )$ & \\ 
 \hline
\texttt{[3Nn,13B]} & 0 & $\emptyset$ & $-$ \\ 
 \hline
\texttt{[3Nn,5B.4.1]} & 0 & $\emptyset$ & $-$ \\ 
 \hline
\texttt{[3Nn,5B.4.2]} & 0 & $\emptyset$ & $-$ \\ 
 \hline
\texttt{[3Nn,7B]} & 0  &  $( 255, 7 )$ & $3^3 5^3 17^3$ \\ 
& & $( -15, -7 )$ & $-3^3 5^3$ \\ 
 \hline
\texttt{[3Ns,5S4]} & 1  &  $( 1/3, -8 ),( -9, -8 )$ & $-2^{15}$ \\ 
& & $( -1, 0 ),( 3, 0 )$ & 0 \\ 
\hline
\texttt{[5S4,7B]} & 0 & $\emptyset$ & $-$ \\ 
 \hline
 \hline
  \texttt{[2Cn,9XE]} & 0 & $\emptyset$ & $-$\\ 
 \hline
\texttt{[4X3,9XE]} & 0 & $\emptyset$ & $-$\\ 
 \hline
\texttt{[4X7,5Nn]} & 2  & $(40 , 1/2)$ &  $-2^{15}3\,5^3$\\
& & $(-440 , 3/5)$ & $-2^{15}3^3 5^3 11^3$\\
& & $(120 , -3/2)$ & $-2^{18}3^3 5^3$\\
& & $(-16008 , -21/13)$ & $-2^{18} 3^3 5^3 23^3 29^3$\\
&& $(0 , -1),(0 , -1/2)$  & 0\\
& & $(-8 , -1/2 ) ,(-8 , -1)$ & \\
\hline
\texttt{[4X7,7B]} & 1  &  $( -3/2, -49/4 )$ & $3^3 13/2^2$ \\ 
& & $( -479/16, -4 )$ & $-3^3 13\, 479^3/2^{14}$ \\ 
 \hline \rowcolor{white}
\texttt{[8X4,9XE]} & 0 & $\emptyset$ & $-$ \\ 
 \hline
\texttt{[8X5,9XE]} & 2 & $C(\Q_3)=\emptyset$ & $-$   \\
\hline
\end{tabular}
\captionof{table}{Modular curves of genus 2}\label{tab:g2}
\end{center}
}


{\footnotesize
\begin{center}
\begin{tabular}{|l|l|l|l|l|l|l|}
\hline
$\texttt{[2B,7Ns]}_{3}^\text{cm}$ &  $\texttt{[2Cn,7Nn]}_{3}^\text{cm}$ &  $\texttt{[2Cn,7Ns]}_{3}^\emptyset$ &  $\texttt{[3Nn,7Ns]}_{3}^\text{cm}$ &  $\texttt{[3Ns,5Nn]}_{3}^\text{cm}$ &  $\texttt{[4X3,7Nn]}_{3}^\text{cm}$ & $\texttt{[4X3,7Ns]}_{3}^\emptyset$ \\
 $\texttt{[4X7,13B]}_{3}^\emptyset$ &  $\texttt{[8X4,7Nn]}_{3}^\text{cm}$ &  $\texttt{[8X4,7Ns]}_{3}^\emptyset$ & $\texttt{[8X5,7Nn]}_{3}^\text{cm}$ &  $\texttt{[8X5,7Ns]}_{3}$ &  $\texttt{[3Nn,5Cs]}_{4}^\emptyset$ &   $\texttt{[3Nn,5Ns.2.1]}_{4}^\text{cm}$ \\
 $\texttt{[5S4,13B]}_{4}^\emptyset$ &  $\texttt{[4X7,7Nn]}_{4}^\text{cm}$ &  $\texttt{[9XE,2B]}_{4}^\text{cm}$ & $\texttt{[3B,7Nn]}_{5}^\text{cm}$ &  $\texttt{[5Nn,7B]}_{5}^\text{cm}$ &  $\texttt{[3B,7Ns]}_{6}^\text{cm}$  &
 $\texttt{[3Ns,7Nn]}_{6}^\text{cm}$ \\
  $\texttt{[5B,7Nn]}_{6}^\emptyset$ &  $\texttt{[4X7,7Ns]}_{6}^\text{cm}$ & $\texttt{[4X7,9XE]}_{6}$ &  $\texttt{[2Cn,11Nn]}_{7}^\text{cm}$ &  $\texttt{[3Nn,11Nn]}_{7}^\text{cm}$ &  $\texttt{[5S4,7Nn]}_{7}^\text{cm}$ &
  $\texttt{[4X3,11Nn]}_{7}^\text{cm}$ \\
    $\texttt{[8X4,11Nn]}_{7}^\text{cm}$ & $\texttt{[8X5,11Nn]}_{7}^\text{cm}$ &  $\texttt{[2B,11Nn]}_{8}^\text{cm}$ &  $\texttt{[9XE,5S4]}_{8}^\text{cm}$ &  $\texttt{[5B,7Ns]}_{9}^\emptyset$ &  $\texttt{[5Nn,13B]}_{9}^\emptyset$ & 
 $\texttt{[5S4,7Ns]}_{9}^\text{cm}$ \\$\texttt{[9XE,5B]}_{10}^\emptyset$ &  $\texttt{[5Nn,7Nn]}_{13}^\text{cm}$ &  $\texttt{[4X7,11Nn]}_{13}^\text{cm}$ &  $\texttt{[3B,11Nn]}_{14}^\text{cm}$ & $\texttt{[9XE,7B]}_{14}^\emptyset$ &  $\texttt{[5Nn,7Ns]}_{18}^\text{cm}$ &   $\texttt{[9XE,5Nn]}_{18}^\text{cm}$ \\
   $\texttt{[5S4,11Nn]}_{19}^\text{cm}$ &  $\texttt{[5B,11Nn]}_{20}^\emptyset$ &  $\texttt{[7Nn,13B]}_{20}^\emptyset$ & $\texttt{[9XE,13B]}_{26}^\emptyset$ &  $\texttt{[7Ns,13B]}_{27}^\emptyset$ & $\texttt{[7B,11Nn]}_{32}^\emptyset$ & $\texttt{[5Nn,11Nn]}_{38}^\text{cm}$ \\   $\texttt{[9XE,7Nn]}_{40}^\text{cm}$ & $\texttt{[9XE,7Ns]}_{54}^\text{cm}$ &$\texttt{[11Nn,13B]}_{56}^\emptyset$  & $\texttt{[7Nn,11Nn]}_{81}^\text{cm}$ &  $\texttt{[9XE,11Nn]}_{111}^\text{cm}$ &   $\texttt{[7Ns,11Nn]}_{112}^\text{cm}$   & \\
 \hline
\end{tabular}
\captionof{table}{Remaining maximal modular curves of genus > 2}\label{rema}
 \end{center}
 }

\begin{center}
\begin{tabular}{|c|c|c|c|}
\hline 
Type & $j$-invariant & Example  & Conductor  \\
\hline
\texttt{[4X7,9XE]}
& $-2^2 3^7 5^3 439^3$ & $y^2 = x^3 - 1126035x + 459913278$ & 701784\\
\hline
 \multirow{2}{*}{\texttt{13S4}} & \footnotesize$-\frac{2^{12}\cdot 5^3\cdot 11\cdot 13^4}{3^{13}}$&  \scriptsize $y^2 + y = x^3 + x^2 - 7653878762768x + 8080142566037338385$ & \scriptsize  374369283576145574257827 \\
\cline{2-4}
&  \footnotesize $\frac{2^{18}\cdot3^3\cdot 13^4\cdot 127^3\cdot 139^3\cdot 157^3\cdot 283^3\cdot 929}{5^{13}\cdot 61^{13}}$ & \scriptsize $y^2 + y = x^3 - 53690013976669148x + 4788368560731534924873003$ & \scriptsize 528531611786945 \\
\hline
\end{tabular}
\captionof{table}{Examples out of LMFDB: Genus $X_G >1$}\label{adicex}
\end{center}

\noindent{\bf Extra information: Polynomials for some of the tables:}

\

\noindent$\bullet$ Table \ref{jmaps}}:
$$
\scriptsize
\begin{array}{rl}
P_{11}(x,y)=&(x^2+3x-6)(11(x^2-5)y+(2x^4+23x^3-72x^2-28x+127))(6y+11x-19)(22(x-2)y+(5x^3+17x^2-112x+120))\\
P_{13}(t)=&t^{12} + 231t^{11} + 269t^{10} - 3160t^9 + 6022t^8 - 9616t^7 + 21880t^6 - 34102t^5 + 28297t^4 - 12455t^3 + 2876t^2 - 243t + 1\\[2mm]
\end{array}
$$
\noindent $\bullet$ Table \ref{Eq_finemoduli}}:
$$\scriptsize
\begin{array}{rl}
Q_{7,3}(t)=&t^{12} - 18t^{11} + 117t^{10} - 354t^9 + 570t^8 - 486t^7+ 273t^6 - 222t^5  + 174t^4 - 46t^3 - 15t^2 + 6t + 1\\
Q_{7,4}(t)=&t^{12}-522 t^{11}-8955 t^{10}+37950 t^9-70998 t^8+131562 t^7 -253239 t^6+316290 t^5-218058 t^4+80090 t^3-14631 t^2+510 t+1\\
Q_{7,5}(t)=&  (t^4 - 6t^3 + 17t^2 - 24t + 9)(3t^4 - 4t^3 - 5t^2 - 2t - 1)(9t^4 - 12t^3 - t^2 + 8t - 3)\\
P_{13,4}(t)=& t^8 + 235t^7 + 1207t^6 + 955t^5 + 3840t^4 - 955t^3 + 1207t^2 - 235t+ 1\\
P_{13,5}(t)=& t^8 - 5t^7 + 7t^6 - 5t^5 + 5t^3 + 7t^2 + 5t + 1\\
Q_{13,4}(t)=&t^{12} - 512 t^{11} - 13079 t^{10} - 32300 t^9 - 104792 t^8 - 111870 t^7 - 419368 t^6 + 111870 t^5 - 104792 t^4 + 32300 t^3 - 13079 t^2 + 512 t +1 \\
Q_{13,5}(t)=&t^{12} - 8 t^{11} + 25 t^{10} - 44 t^9 + 40 t^8 + 18 t^7 - 40 t^6 - 18 t^5 + 40 t^4 + 44 t^3 + 25 t^2 + 8 t + 1\\[2mm]
\end{array}
$$
\noindent $\bullet$ Table \ref{jmapsj0coarse}:
$$\scriptsize
\begin{array}{rl}
P_1(t)=&9289670605927434230887788667927350765223936 t^{12}+29278270369999901950955093380872504213504 t^{11}\\
 & +42292791583476109342488555094120464384 t^{10}+37025228725171770917082043542364160 t^9\\
 & +21878993767277277183859380817920 t^8+9193668584402084086752989184 t^7+2816901155195900104390656 t^6\\
 & +634096368731743520256 t^5+104078307564875520 t^4+12147786424640 t^3+957045024 t^2+45696 t+1\\
P_2(t)  = & 65536 t^{12}-4063232 t^{11}+16777216 t^{10}-28958720 t^9+27832320 t^8-16576512 t^7\\
 & +6385664 t^6-1608192 t^5+261120 t^4-25920 t^3+1376 t^2-32 t+1\\[1mm]
 \end{array}
$$
\noindent $\bullet$ Table \ref{tab:gen1PosRank}:
$$\scriptsize
\begin{array}{rl}
F_1(x,y)=&53 x^2 y^2-232 x^2 y+272 x^2+17 x y^3-372 x y^2+504 x y+544 x+2 y^4-56 y^3+468 y^2+736 y+272\\
F_2(x,y)=&11 x^2 y^2-24 x^2 y-16 x^2-x y^3-44 x y^2+8 x y-32 x-y^4+8 y^3+76 y^2+32 y-16\\
F_3(x,y)=&28 x^2 y^2-266 x^2 y-1169 x^2+14 x y^3-126 x y^2+378 x y-5180 x-5 y^4-52 y^3+1032 y^2+2612 y-5711\\
F_4(x,y)=&28 x^2 y^2+77 x^2 y+203 x^2-7 x y^3-280 x y^2-189 x y+875 x+y^4+30 y^3+509 y^2-542 y+956\\
F_5(x,y)=&14 x^2 y^2-133 x^2 y+616 x^2+7 x y^3-210 x y^2+385 x y+2947 x+2 y^4-38 y^3+185 y^2+1758 y+3529\\
F_6(x,y)=&7 x^3 y^3+329 x^3 y^2-448 x^3 y-4207 x^3-21 x^2 y^4-42 x^2 y^3-1575 x^2 y^2-1554 x^2 y-28749 x^2+147 x y^4+\\
& 294 x y^3+3822 x y^2+3675 x y-65268 x+y^6-4 y^5-565 y^4-260 y^3+20710 y^2+11170 y-49223\\
G_1(x,y)=&-x^{10}-210 x^9-20 x^8 y-7085 x^8-1440 x^7 y-72280 x^7-25904 x^6 y-262770 x^6-150880 x^5 y-650220 x^5-  323320 x^4 y\\
& -2073650 x^4-703840 x^3 y-3299800 x^3-1393200 x^2 y-4157325 x^2-1015200 x y  -4799250 x-580500 y-1454625\\
G_2(x,y)=&x^{20}+1260 x^{19}-56 x^{18} y+42870 x^{18}-13456 x^{17} y+403100 x^{17}+440168 x^{16} y+1335852085 x^{16}\\
 &  -121312128 x^{15} y-61881358352 x^{15}-1307723360 x^{14} y+6615621682760 x^{14}+89306042944 x^{13} y\\
 &  -160467247908880 x^{13}-47113584888416 x^{12} y+4865894105161650 x^{12}+2738042048551808 x^{11} y\\
 &  +387841608615974760 x^{11}-171889581774911248 x^{10} y-31426480449116277436 x^{10}+\\
 &  6725955511078796960 x^9 y+1497426397985497774920 x^9-223160321487761337552 x^8 y-\\
 &  47343303309980334099710 x^8+4384140813703797518208 x^7 y+1177623302292232438580400 x^7+\\
 &  23859921698214377667616 x^6 y-18020700523192267143943480 x^6-3625628171475452791902656 x^5 y-\\
 &  91886854155885171733697488 x^5+78452198711443613319043360 x^4 y+\\
 &  5946599171665206805434677005 x^4- 1229423961196759128856211328 x^3 y-\\
 &  57315101050144284752075165940 x^3+3544228211661585891755479432 x^2 y+\\
 &  2147304967678389238737065757750 x^2+693493926872718537524755967344 x y+\\
 &  51452457690608652712058018333180 x+ 4183073553838029267128981247656 y+\\
 &  223823753822802307667379753623561
\end{array}
$$

\bibliographystyle{plain}

\end{document}